\documentclass[11pt]{article}
\usepackage{amssymb,amsmath, amsthm}
\usepackage{amssymb}
\usepackage{enumerate}
\usepackage{hyperref}
\usepackage{cleveref}
\crefformat{footnote}{#2\footnotemark[#1]#3}
\usepackage[algoruled,boxruled, linesnumbered, shortend, lined]{algorithm2e}
\usepackage{algpseudocode}
\usepackage{cite}
\usepackage{enumitem}
\setenumerate[0]{leftmargin=*}
\usepackage{caption}
\usepackage{graphicx}

\usepackage{mathtools}
\DeclarePairedDelimiter{\ceil}{\lceil}{\rceil}

\addtolength{\oddsidemargin}{-.5in}
\addtolength{\evensidemargin}{-.5in}
\addtolength{\textwidth}{1in}
\addtolength{\topmargin}{-.5in}
\addtolength{\textheight}{1in}

\numberwithin{equation}{section}


\newcommand{\ve}{\varepsilon}
\DeclareMathOperator*{\proj}{proj}
\DeclareMathOperator*{\dom}{dom\,}
\DeclareMathOperator*{\lip}{lip\,}

\DeclareMathOperator*{\diam}{diam}

\newcommand{\ip}[1] {\langle #1 \rangle }
\newcommand{\norm}[1] {\left \| #1 \right \|}
\newcommand{\inclu}[0] {\ar@{^{(}->}}

\newcommand{\dist}{{\rm dist}}
\newcommand{\prox}{{\rm prox}}
\newcommand{\R}{{\bf R}}


\usepackage[margin=1.1in]{geometry}

\newcommand{\argmin}{\operatornamewithlimits{argmin}}
\newcommand{\lf}{\operatornamewithlimits{liminf}}


\SetAlgoSkip{bigskip}
\SetKwInput{Input}{Input\hspace{34pt}{ }}
\SetKwInput{Output}{Output\hspace{27pt}{ }}
\SetKwInput{Initialize}{Initialize\hspace{2pt}{ }}
\SetKwInput{Parameters}{Parameters\hspace{2pt}{ }}


\newtheorem{theorem}{Theorem}[section]
\newtheorem{proposition}[theorem]{Proposition}
\newtheorem{lemma}[theorem]{Lemma}
\newtheorem{remark}[theorem]{Remark}
\newtheorem{defn}[theorem]{Definition}

\newtheorem{example}{Example}[section]

\title{Efficiency of minimizing compositions of convex functions and smooth maps
 \thanks{University of Washington, Department of Mathematics, 
		Seattle, WA 98195; Research of Drusvyatskiy and Paquette was partially supported by the AFOSR award FA9550-15-1-0237 and NSF DMS 1651851.}}

\begin{document}
	
	\author{D. Drusvyatskiy 
		\thanks{
			E-mail: ddrusv@uw.edu;	\texttt{http://www.math.washington.edu/{\raise.17ex\hbox{$\scriptstyle\sim$}}ddrusv/}}
		\and
		$\textrm{C. Paquette}$
		\thanks{
			E-mail: yumiko88@uw.edu;			
		}}	

\date{}	
\maketitle

\begin{abstract}
We consider global efficiency of algorithms for  minimizing a sum of a convex function and a composition of a Lipschitz convex function with a smooth map. The basic algorithm we rely on is the prox-linear method, which in each iteration solves a regularized subproblem formed by linearizing the smooth map. When the subproblems are solved exactly, the method has  efficiency $\mathcal{O}(\varepsilon^{-2})$, akin to gradient descent for smooth minimization.  We show that when the subproblems can only be solved by first-order methods, a simple combination of smoothing, the prox-linear method, and a fast-gradient scheme yields an algorithm with  complexity $\widetilde{\mathcal{O}}(\varepsilon^{-3})$. The technique readily extends to minimizing an average of $m$ composite functions,  with complexity $\widetilde{\mathcal{O}}(m/\varepsilon^{2}+\sqrt{m}/\varepsilon^{3})$ in expectation.
We round off the paper with an inertial prox-linear method that automatically accelerates in presence of convexity. 
\end{abstract}

\bigskip

\noindent\textbf{Key words.} 
Composite minimization, fast gradient methods, Gauss-Newton, prox-gradient, inexactness, complexity, smoothing, incremental methods, acceleration
\vspace{0.1cm}

\noindent\textbf{AMS Subject Classification.}  \textit{Primary}  97N60, 90C25; 
\textit{Secondary} 90C06, 90C30.


\section{Introduction}

In this work, we consider the class of  {\em composite optimization problems} 
\begin{equation}\label{eqn:key_comp}
	\min_x\, F(x):=g(x)+h(c(x)),
\end{equation}
where $g\colon\R^d\to\R\cup\{\infty\}$ and  $h\colon\R^m\to\R$ are closed convex functions and $c\colon\R^d\to\R^m$ is a smooth map.  Regularized nonlinear least squares \cite[Section 10.3]{NW} and exact penalty formulations of nonlinear programs \cite[Section  17.2]{NW} are classical examples, while notable contemporary instances include robust phase retrieval \cite{duchi_ruan,duchi_ruan_PR} and matrix factorization problems such as NMF \cite{Gil_surv,gill_siam_news_views}. 
The setting where $c$ maps to the real line and $h$ is the identity function, \begin{equation}\label{eqn:add_comp_intro}
	\min_x~ c(x)+g(x),
\end{equation}
is  now commonplace in large-scale optimization. 
In this work, 
we use the term {\em additive composite minimization} for \eqref{eqn:add_comp_intro} to distinguish it from the more general composite  class \eqref{eqn:key_comp}. 

The proximal gradient algorithm, investigated by Beck-Teboulle \cite{beck} and Nesterov \cite[Section 3]{nest_conv_comp}, is a popular first-order method
for additive composite minimization. Much of the current paper will center around the 
{\em prox-linear method}, which is a direct extension of the prox-gradient algorithm to the entire problem class \eqref{eqn:key_comp}.
In each iteration, the prox-linear method linearizes the smooth map $c(\cdot)$ and solves the {\em proximal subproblem}:
\begin{equation}\label{eqn:prox-subprob}
	x_{k+1}=\argmin_{x}\, \Big\{g(x)+h\Big(c(x_k)+\nabla c(x_k)(x-x_k)\Big)+\tfrac{1}{2t}\|x-x_k\|^2\Big\},
\end{equation}
for an appropriately chosen parameter $t>0$.  The underlying assumption here is that the strongly convex proximal subproblems 
\eqref{eqn:prox-subprob} can be solved efficiently. This is indeed reasonable in some circumstances. For example, one may have available specialized methods for the proximal subproblems, or interior-point points methods may be available for moderate dimensions $d$ and $m$, or it may be that case that computing an accurate estimate of $\nabla c(x)$ is already the bottleneck (see e.g. Example~\ref{ex:grey}). The prox-linear method was recently investigated
in \cite{prox,prox_error,composite_cart,nest_GN}, though the ideas behind the algorithm and of its trust-region variants are  much older \cite{composite_cart,powell_paper,burke_com,yu_super,steph_conv_comp,fletcher_back,pow_glob}.
The scheme \eqref{eqn:prox-subprob} reduces to the popular prox-gradient algorithm for additive composite minimization, while
for nonlinear least squares, the algorithm is closely related to the Gauss-Newton algorithm \cite[Section 10]{NW}. 


Our work focuses on global efficiency estimates of numerical methods. Therefore, in line with standard assumptions in the literature, we assume that $h$ is $L$-Lipschitz  and the Jacobian map $\nabla c$ is $\beta$-Lipschitz. 
As in the analysis of the prox-gradient method in Nesterov \cite{smooth_min_nonsmooth,intro_lect}, it is convenient to measure the progress of the prox-linear method in terms of the scaled steps, called the {\em prox-gradients}: $$\mathcal{G}_t(x_k):=t^{-1}(x_k-x_{k+1}).$$ 
A short argument shows that with the optimal choice $t=(L\beta)^{-1}$, the prox-linear algorithm will find a point $x$ satisfying $\|\mathcal{G}_{\frac{1}{L\beta}}(x)\|\leq \varepsilon$ after at most $\mathcal{O}(\frac{L\beta}{\varepsilon^{2}}(F(x_0)-\inf F))$ iterations; see e.g. \cite{composite_cart,prox_error}. We mention in passing that iterate convergence under the K{\L}-inequality was recently shown in \cite{glob_conv,compBP}, while local linear/quadratic rates under appropriate regularity conditions were proved in \cite{prox_error,quad_conv,nest_GN}. 
The contributions of our work are as follows.

\begin{enumerate}[topsep=0pt,itemsep=0ex,partopsep=1ex,parsep=1ex]
	\item {\bf (Prox-gradient and the Moreau envelope)} 
	The size of the prox-gradient $\|\mathcal{G}_{t}(x_k)\|$ plays a basic role in this work. In particular, all convergence rates are stated in terms of this quantity. Consequently, it is important to understand precisely what this quantity entails about the quality of the point $x_k$ (or $x_{k+1}$).
	For additive composite problems \eqref{eqn:add_comp_intro}, the situation is clear. Indeed, the proximal gradient method generates iterates satisfying $F'(x_{k+1};u)\geq -2\|\mathcal{G}_{\frac{1}{\beta}}(x_k)\|$ for all unit vectors $u$, where $F'(x;u)$ is the directional derivative of $F$ at $x$ in direction $u$ \cite[Corollary 1]{nest_conv_comp}. Therefore, a small prox-gradient $\|\mathcal{G}_{\frac{1}{\beta}}(x_k)\|$ guarantees that $x_{k+1}$ is nearly stationary for the problem, since the derivative of $F$ at $x_{k+1}$ in any unit direction is nearly nonnegative. 
	For the general composite class \eqref{eqn:key_comp}, such a conclusion is decisively false: the prox-linear method will typically generate an iterate sequence along which $F$ is differentiable with gradient norms $\|\nabla F(x_k)\|$ uniformly bounded away from zero, in spite of the norms $\|\mathcal{G}_{\frac{1}{L\beta}}(x_k)\|$ tending to zero.\footnote{See the beginning of Section~\ref{subsec:stepsize_stat} for a simple example of this type of behavior.} Therefore, one must justify the focus on the norm $\|\mathcal{G}_{\frac{1}{L\beta}}(x_k)\|$ by other means.
	To this end, our first contribution is Theorem~\ref{thm:prox_approx}, where we prove that  $\|\mathcal{G}_{\frac{1}{L\beta}}(x)\|$ is proportional to the norm of the true gradient of the Moreau envelope of $F$ --- a well studied smooth approximation of $F$ having identical stationary points.
	An immediate consequence is that even though $x$ might not be nearly stationary for $F$,  a small prox-gradient  $\|\mathcal{G}_{\frac{1}{L\beta}}(x)\|$ guarantees that $x$ is near some point $\hat x$ (the proximal point), which is nearly stationary for $F$. In this sense, a small prox-gradient $\|\mathcal{G}_{\frac{1}{L\beta}}(x)\|$ is informative about the quality of $x$. We note  that an earlier version of this conclusion based on a more indirect argument, appeared in \cite[Theorem 5.3]{prox_error}, and was used to derive linear/quadratic rates of convergence for the prox-linear method under suitable regularity conditions.
	
	\item {\bf (Inexactness and first-order methods)}  For the general composite class \eqref{eqn:key_comp}, coping with inexactness in the proximal subproblem solves \eqref{eqn:prox-subprob} is unavoidable. We perform an inexact analysis of the prox-linear method based on two natural models of inexactness: $(i)$ near-optimality in function value and $(ii)$ near-stationarity in the dual. 		
	Based on the inexact analysis, it is routine to derive overall efficiency estimates for the prox-linear method, where the proximal subproblems are themselves solved by first-order algorithms.  
Unfortunately, the efficiency estimates we can prove for such direct methods appear to either be unsatisfactory or the algorithms themselves appear not to be very practical (Appendix~\ref{sec:app_dual_meth}). Instead, we present algorithms based on a smoothing technique.

	\item {\bf (Complexity of first-order methods through smoothing)} 
	Smoothing is a common technique in nonsmooth optimization. The seminal paper of Nesterov \cite{smooth_min_nonsmooth}, in particular, derives convergence guarantees for algorithms based on infimal convolution smoothing in structured convex optimization. 
	In the context of 
	the composite class \eqref{eqn:key_comp}, 
	smoothing is indeed appealing. In the simplest case, one replaces the function $h$ by a smooth approximation and solves the resulting smooth problem instead. 
	
	We advocate running an inexact prox-linear method on the smooth approximation, with the proximal subproblems approximately solved by fast-gradient methods. To state the resulting complexity bounds, let us suppose that there is a finite upper bound  on the operator norms $\|\nabla c(x)\|_{\textrm{op}}$ over all $x$ in the domain of $g$, and denote it by $\displaystyle\|\nabla c\|$.\footnote{It is sufficient for the inequality $\|\nabla c\|\geq \|\nabla c(x_k)\|_{\textrm{op}}$ to hold just along the iterate sequence $x_k$ generated by the method; in particular, $\|\nabla c\|$ does not need to be specified when initializing the algorithm.}
	We prove that the outlined scheme requires at most 
	\begin{equation}\label{eqn:eff_intro_baseline}
		\widetilde{\mathcal{O}}\left(\frac{L^2\beta\|\nabla c\|}{\varepsilon^{3}}(F(x_0)-\inf F)\right)
	\end{equation}
	evaluations of $c(x)$, matrix vector products $\nabla c(x)v$, $\nabla c(x)^Tw$, and proximal operations of $g$ and $h$  to find a point $x$ satisfying $\|\mathcal{G}_{\frac{1}{L\beta}}(x)\|\leq \varepsilon$. 
	To the best of our knowledge, this is the best known complexity bound for the problem class \eqref{eqn:key_comp} among first-order methods. Here, the symbol $\widetilde{\mathcal{O}}$ hides logarithmic terms.\footnote{If a good estimate on the gap $F(x_0)-\inf F$ is known, the logarithmic terms can be eliminated by a different technique, described in Appendix~\ref{sec:app_dual_meth}.}
	
	\item  {(\bf Complexity of finite-sum problems)}
	Common large-scale problems in machine learning and high dimensional statistics lead to minimizing an average of a large number of functions. Consequently, we consider the finite-sum extension of the composite problem class,
	$$\min_{x} ~ F(x):=\frac{1}{m}\sum_{i=1}^m h_i(c_i(x))+g(x),$$
	where now each $h_i$ is $L$-Lipschitz and each $c_i$ is $C^1$-smooth with $\beta$-Lipschitz gradient. 
	Clearly, the finite-sum problem is itself an instance of \eqref{eqn:key_comp} under the identification
	$h(z_i,\ldots,z_m):=\frac{1}{m}\sum_{i=1}^m h_i(z_i)$ and $c(x):= (c_1(x),\ldots,c_m(x))$. In this structured context, however, the complexity of an algorithm is best measured in terms of the number of individual evaluations $c_i(x)$ and $\nabla c_i(x)$, dot-product evaluations $\nabla c_i(x)^Tv$, and proximal operations $\prox_{th_i}$ and $\prox_{tg}$ the algorithm needs  to find  a point $x$ satisfying 
	$\|\mathcal{G}_{\frac{1}{{L}{\beta}}}(x)\|\leq \varepsilon$. A routine computation shows that the efficiency estimate \eqref{eqn:eff_intro_baseline} of the basic inexact prox-linear method described above leads to the complexity 
	\begin{equation}\label{eqn:stupidInterpr_baseintro}
		\mathcal{\widetilde{O}}\left(  \frac{ m\cdot{L}^2{\beta}{\|\nabla c\|}}{\varepsilon^3}\left (F(x_0)- \inf F \right ) \right),
	\end{equation} 
where abusing notation, we use ${\|\nabla c\|}$ to now denote an upper bound on $\|\nabla c_i(x)\|$ over all $i=1,\ldots,m$ and $x\in \dom g$. We show that a better complexity in expectation is possible by incorporating (accelerated)-incremental methods \cite{catalyst,accsdca,frostig,conjugategradient,accsvrg} for the proximal subproblems.
	The resulting randomized algorithm will generate a  point $x$ satisfying 
	$$\mathbb{E}[\|\mathcal{G}_{\frac{1}{{L}{\beta}}}(x)\|]\leq \varepsilon,$$
	after at most 
	$$
	\mathcal{\widetilde{O}}\left(\left(\frac{m{L}{\beta} }{\varepsilon^2}+\frac{\sqrt{m}\cdot{L}^2{\beta}{\|\nabla c\|}}{\varepsilon^3}\right)\cdot(F(x_0)-\inf F)\right)$$
	basic operations. Notice that the coefficient of $1/\varepsilon^3$ scales at worst as $\sqrt{m}$ --- a significant improvement over \eqref{eqn:stupidInterpr_baseintro}. We note that a different  complementary approach, generalizing stochastic subgradient methods, has been recently pursued by Duchi-Ruan \cite{duchi_ruan}.
	
	\item {\bf (Acceleration)}  
	The final contribution of the paper concerns acceleration of the (exact) prox-linear method.
	For additive composite problems, with $c$ in addition  convex, the prox-gradient method is suboptimal from the viewpoint of computational complexity \cite{intro_lect,complexity}. Accelerated gradient methods, beginning with Nesterov \cite{nest_orig} and extended by Beck-Teboulle \cite{beck} achieve a superior rate in terms of function values. Later,
	Nesterov in \cite[Page 11, item 2]{nest_optima} showed that essentially the same accelerated schemes also achieve a superior rate of $\mathcal{O}((\frac{\beta}{\varepsilon})^{2/3})$ in terms of stationarity, and even a faster rate is possible by first regularizing the problem \cite[Page 11, item 3]{nest_optima}.\footnote{The short paper \cite{nest_conv_comp} only considered smooth unconstrained minimization; however, a minor modification of the proof technique extends to the convex additive composite setting.} Consequently, desirable would be an algorithm that {\em automatically} accelerates in presence of  convexity, while performing no worse than the prox-gradient method on nonconvex instances.
	In the recent manuscript \cite{ghadimi_lan}, Ghadimi and Lan described such a scheme for additive composite problems. Similar acceleration techniques have also been used for exact penalty formulations of nonlinear programs \eqref{eqn:key_comp} with numerical success, but without formal justification; the paper \cite{iter_line_bur} is a good example.

	In this work, we extend the accelerated algorithm of Ghadimi-Lan~\cite{ghadimi_lan}  for additive composite problems to the entire problem class \eqref{eqn:key_comp}, with inexact subproblem solves. Assuming the diameter  $M:=\diam(\dom g)$ is finite, the scheme comes equipped with the guarantee
	\[  \min_{j = 1, \ldots, k}\, \norm{\mathcal{G}_{\frac{1}{2L\beta}}(x_j)}^2 \le (L\beta M)^2\cdot \mathcal{O}\left(\frac{1}{k^3}
	+  \frac{ c_2 }{ k^2 } + \frac{c_1  }{ k }
	\right),
	\]
	where the constants $0\leq c_1\leq c_2\leq 1$ quantify ``convexity-like behavior'' of the composition.	The inexact analysis of the proposed accelerated method based on functional errors is inspired by and shares many features with the seminal papers \cite{inex_comput,catalyst} for convex additive composite problems \eqref{eqn:add_comp_intro}.

\end{enumerate}

The outline of the manuscript is as follows. Section~\ref{sec:not} records basic notation that we use throughout the paper.  In Section~\ref{sec:comp_prob_class}, we introduce the composite problem class, first-order stationarity, and the basic prox-linear method. Section~\ref{subsec:stepsize_stat} discusses weak-convexity of the composite function and the relationship of the prox-gradient with the gradient of the Moreau envelope. Section~\ref{sec:inex_prox_lin} analyzes inexact prox-linear methods based on two models of inexactness: near-minimality and dual near-stationarity. In Section~\ref{sec:overall_comp}, we derive efficiency estimates of first-order methods for the composite problem class, based on a smoothing strategy. Section~\ref{sec:fin_sum_prob} extends the aforementioned results to problems where one seeks to minimize a finite average of  composite functions.
The final Section~\ref{sec:accel_conv_comp}  
discusses an inertial prox-linear algorithm that is adaptive to convexity.
\section{Notation}\label{sec:not}
The notation we follow is standard. Throughout, we consider a Euclidean space, denoted by $\R^d$, with an inner product $\langle\cdot,\cdot \rangle$ and the induced norm $\|\cdot\|$. Given a linear map $A\colon\R^d\to\R^l$, the adjoint $A^*\colon\R^l\to\R^d$ is the unique linear map satisfying
$$\langle Ax,y\rangle= \langle x,A^*y\rangle \qquad \textrm{for all } x\in \R^d, y\in \R^l.$$
The operator norm of $A$, defined as $\displaystyle \|A\|_{\rm op}:=\max_{\|u\|\leq 1} \|Au\|$, coincides with the maximal singular value of $A$ and satisfies  $\|A\|_{\textrm{op}}=\|A^*\|_{\textrm{op}}$.
For any map $F\colon\R^d\to\R^m$, we set
$$\lip(F):=\sup_{x\neq y}~ \frac{\|F(y)-F(x)\|}{\|y-x\|}.$$ 
In particular, we say that  $F$ is $L$-Lipschitz continuous, for some real $L\geq 0$, if the inequality $\lip(F)\leq L$ holds.
Given a set $Q$ in $\R^d$, the
{\em distance} and {\em projection} of a point $x$ onto $Q$ are given by
\begin{align*}
	\dist(x;Q)&:=\inf_{y\in Q}~ \|y-x\|,\qquad \proj(x;Q):=\argmin_{y\in Q}~ \|y-x\|,
\end{align*}
respectively.
The extended-real-line is the set $\overline{\R}:=\R\cup\{\pm\infty\}$. The {\em domain} and the {\em epigraph} of any function $f\colon\R^d\to\overline \R$ are the sets
\begin{align*}
	\textrm{dom}\, f&:=\{x\in \R^d: f(x)<+\infty\},\qquad
	\textrm{epi}\, f:=\{(x,r)\in \R^d\times \R: f(x)\leq r\},
\end{align*} 
respectively. We say that $f$ is {\em closed} if its epigraph, $\textrm{epi}\, f$, is a closed set. Throughout, we will assume that all functions that we encounter are {\em proper}, meaning they have nonempty domains and never take on the value $-\infty$.
The indicator function of a set $Q\subseteq\R^d$, denoted by $\delta_Q$, is defined to be zero on $Q$ and $+\infty$ off it.

Given a convex function $f\colon\R^d\to\overline{\R}$, a vector $v$ is called a {\em subgradient} of $f$ at a point $x\in \dom f$ if the inequality
\begin{equation}\label{eqn:subgrad_conv}
	f(y)\geq f(x)+\langle v,y-x\rangle\qquad \textrm{ holds for all } y\in \R^d.
\end{equation}
The set of all subgradients of $f$ at $x$ is denoted by $\partial f(x)$, and is called the {\em subdifferential} of $f$ at $x$.
For any point $x\notin \dom f$, we set $\partial f(x)$ to be the empty set.
With any convex function $f$, we associate the {\em Fenchel conjugate}  $f^{\star}\colon\R^d\to\overline\R$, defined by 
$$f^{\star}(y):=\sup_x\, \{\langle y,x\rangle-f(x)\}.$$
If $f$ is closed and convex, then equality $f=f^{\star \star}$ holds and we have the equivalence
\begin{equation}\label{eqn:reverse_subgrad_conj}
	y\in \partial f(x) \qquad \Longleftrightarrow \qquad x\in \partial f^{\star}(y).
\end{equation}


For any function $f$ and real $\nu>0$, the {\em Moreau envelope} and the 
{\em proximal mapping}  are defined by 
\begin{align*}
	f_{\nu}(x)&:=\inf_{z}\, \left\{ f(z)+\frac{1}{2\nu}\|z-x\|^2\right\}, \\
	\prox_{{\nu}f}(x) &:=\argmin_{z}\, \left\{ f(z)+\frac{1}{2{\nu}}\|z-x\|^2\right\},
\end{align*}
respectively. In particular, the Moreau envelope of an indicator function $\delta_Q$ is simply the map $x\mapsto\frac{1}{2{\nu}}\dist^2(x;Q)$ and the proximal mapping of $\delta_Q$ is the projection $x\mapsto\proj(x;Q)$. The following lemma lists well-known regularization properties of the Moreau envelope.

\begin{lemma}[Regularization properties of the envelope]\label{lem:lip_cont} Let $f\colon\R^d\to\R$ be a closed, convex function. Then $f_{\nu}$ is convex and $C^1$-smooth with
	$$\nabla f_{\nu}(x)=\nu^{-1}(x-\prox_{\nu f}(x))
	\quad \textrm{ and }\qquad \lip(\nabla f_{\nu})\leq \tfrac{1}{\nu}.$$
	If in addition $f$ is $L$-Lipschitz, then the envelope $f_{\nu}(\cdot)$ is $L$-Lipschitz and  satisfies
	\begin{equation}\label{eqn:moreau}
		0\leq f(x)- f_{\nu}(x)\leq \frac{L^2\nu}{2}\qquad \textrm{ for all } x\in\R^d.	
	\end{equation}
	
\end{lemma}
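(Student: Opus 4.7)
The plan is to derive every claim from the properties of the proximal map $p(x) := \prox_{\nu f}(x)$. Strong convexity of the subproblem $\min_z \{f(z) + \tfrac{1}{2\nu}\|z-x\|^2\}$ in $z$ ensures that $p(x)$ is uniquely defined at every $x \in \R^d$, and the first-order optimality condition reads $v(x) := \nu^{-1}(x - p(x)) \in \partial f(p(x))$. This single inclusion is the workhorse of the whole lemma.

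For convexity of $f_\nu$, I would observe that $f_\nu$ is the infimal convolution of the convex function $f$ with the convex quadratic $z \mapsto \tfrac{1}{2\nu}\|z\|^2$, hence convex. For the gradient formula, expanding $\|p(x) - y\|^2$ around $x$ in the bound $f_\nu(y) \leq f(p(x)) + \tfrac{1}{2\nu}\|p(x) - y\|^2$ yields the quadratic upper estimate $f_\nu(y) \leq f_\nu(x) + \langle v(x), y-x\rangle + \tfrac{1}{2\nu}\|y-x\|^2$. Combined with convexity of $f_\nu$, this forces every subgradient $w \in \partial f_\nu(x)$ to coincide with $v(x)$ (by plugging $y = x + t(w - v(x))$ and letting $t\downarrow 0$), so $\partial f_\nu(x) = \{v(x)\}$ and $\nabla f_\nu(x) = v(x)$. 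For the $\nu^{-1}$-Lipschitz bound on $\nabla f_\nu$, monotonicity of $\partial f$ applied to the two inclusions $v(x_i) \in \partial f(p(x_i))$, together with $p(x_i) = x_i - \nu v(x_i)$, gives the cocoercivity estimate $\langle v(x_1) - v(x_2), x_1 - x_2\rangle \geq \nu \|v(x_1) - v(x_2)\|^2$, and Cauchy--Schwarz finishes the Lipschitz claim.

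For the second half, assume in addition that $f$ is $L$-Lipschitz, so every subgradient of $f$ has norm at most $L$. The inclusion $\nabla f_\nu(x) = v(x) \in \partial f(p(x))$ then yields $\|\nabla f_\nu(x)\| \leq L$ uniformly in $x$, so $f_\nu$ is $L$-Lipschitz. The lower bound $f_\nu(x) \leq f(x)$ in \eqref{eqn:moreau} follows by testing $z = x$ in the infimum. For the upper bound, Lipschitzness of $f$ gives $f(p(x)) \geq f(x) - L\|p(x) - x\|$, and inserting into the definition of $f_\nu$ produces $f(x) - f_\nu(x) \leq Lt - \tfrac{t^2}{2\nu}$, where $t := \|p(x) - x\| = \nu \|\nabla f_\nu(x)\| \in [0, L\nu]$. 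Maximizing this concave quadratic over $t$ yields the claimed value $L^2\nu/2$. No step is a serious obstacle; the only mildly delicate point is the differentiability argument, where the quadratic upper bound alone only provides one-sided information, but together with convexity of $f_\nu$ it pins the subdifferential down to a singleton.
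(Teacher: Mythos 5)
Your proof is correct, and it is genuinely more self-contained than the one in the paper. The paper leans on standard references: the gradient formula $\nabla f_{\nu}(x)=\nu^{-1}(x-\prox_{\nu f}(x))=\nu^{-1}\prox_{(\nu f)^{\star}}(x)$ is quoted from Rockafellar, the bound $\lip(\nabla f_{\nu})\leq \nu^{-1}$ is deduced from $1$-Lipschitzness of the proximal map, and the estimate \eqref{eqn:moreau} comes from the dual representation $f_{\nu}=(f^{\star}+\tfrac{\nu}{2}\|\cdot\|^2)^{\star}$ together with the fact that $\dom f^{\star}$ lies in the ball of radius $L$. You instead extract everything from the single optimality inclusion $\nu^{-1}(x-\prox_{\nu f}(x))\in \partial f(\prox_{\nu f}(x))$: the quadratic upper estimate plus convexity pins down the subdifferential of $f_{\nu}$ to a singleton (and a finite convex function with singleton subdifferential is differentiable there, which you use implicitly but which is standard); monotonicity of $\partial f$ gives the cocoercivity inequality $\langle v(x_1)-v(x_2),x_1-x_2\rangle\geq \nu\|v(x_1)-v(x_2)\|^2$ and hence the Lipschitz bound on the gradient; and the primal maximization of $Lt-\tfrac{t^2}{2\nu}$ replaces the conjugate-duality computation for \eqref{eqn:moreau}. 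The two proofs agree on the $L$-Lipschitzness of $f_{\nu}$ itself. What your route buys is independence from Fenchel conjugacy and the cited theorems, at the cost of a slightly longer argument for differentiability; the paper's route is shorter but only modulo the external results it invokes. All of your individual steps check out.
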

\begin{proof}
	The expression $\nabla f_{\nu}(x) =\nu^{-1}(x-\prox_{\nu f}(x))=\nu^{-1}\cdot \prox_{(\nu f)^*}(x)$ can be found in \cite[Theorem 31.5]{rock}. The inequality $\lip(\nabla f_{\nu})\leq \frac{1}{\nu}$ then follows since the proximal mapping of a closed convex function is 1-Lipschitz \cite[pp. 340]{rock}. The expression \eqref{eqn:moreau} follows from rewriting
	$f_{\nu}(x)=(f^{\star}+\frac{\nu}{2}\|\cdot\|^2)^{\star}(x)=\sup_{z}\, \{\langle x,z\rangle-f^{\star}(z)-\frac{\nu}{2}\|z\|^2\}$ (as in e.g. \cite[Theorem 16.4]{rock}) and noting that the domain of $f^{\star}$ is bounded in norm by $L$. Finally, to see that $f_{\nu}$ is $L$-Lipschitz, observe 
	$\nabla f_{\nu}(x)\in \partial f(\prox_{\nu f}(x))$ for all $x$, and hence $\|\nabla f_{\nu}(x)\|\leq \sup\{ \|v\|:y\in\R^d, v\in \partial f(y)\}\leq L$.
\end{proof}

\section{The composite problem class}\label{sec:comp_prob_class}
This work centers around nonsmooth and nonconvex optimization problems of the form
\begin{equation}\label{eqn:comp2}
	\min_x~ F(x):=g(x)+h(c(x)).
\end{equation}
Throughout, we make the following assumptions on the functional components of the problem:
\begin{enumerate} 
	\item $g\colon\R^d\to\overline{\R}$ is a proper, closed, convex function;
	\item $h\colon\R^m\to\R$ is a convex and $L$-Lipschitz continuous function:
	$$|h(x)-h(y)|\leq L\|x-y\| \qquad \textrm{for all }x,y\in\R^m;$$
	\item $c\colon\R^d\to\R^m$ is a $C^1$-smooth mapping with a $\beta$-Lipschitz continuous Jacobian map:
	$$\|\nabla c(x)-\nabla c(y)\|_{\textrm{op}}\leq \beta\|x-y\|\qquad \textrm{for all }x,y\in\R^d.$$
\end{enumerate}
The values $L$ and $\beta$ will often multiply each other; hence, we define the constant $\mu:=L\beta.$

\subsection{Motivating examples}
It is instructive to consider some motivating examples fitting into the  framework \eqref{eqn:comp2}.

\begin{example}[Additive composite minimization]\label{exa:comp}
	{\rm The most prevalent example of the composite class \eqref{eqn:comp2} is additive composite minimization. In this case, the map $c$ maps to the real line and $h$ is the identity function:
		\begin{equation}\label{eqn:add_comp}
			\min_x~ c(x)+g(x).
		\end{equation}
		Such problems appear often in statistical learning and imaging, for example. 
		Numerous algorithms are available, especially when $c$ is convex, such as proximal gradient methods and their accelerated variants \cite{beck,nest_conv_comp}. We will often compare and contrast techniques for general composite problems \eqref{eqn:comp2} with those specialized to this additive composite setting.
	}
\end{example}

\begin{example}[Nonlinear least squares]\label{exa:nls}
	{\rm
		The composite problem class also captures nonlinear least squares problems with bound constraints:
		\begin{align*}
			\min_x~ \|c(x)\|\qquad \textrm{subject to}\qquad l_i\leq x_i\leq u_i \quad\textrm{ for }i=1,\ldots,m.
		\end{align*}
		Gauss-Newton type algorithm \cite{lev,marq,more_lev_marq} are often the methods of choice for such problems.
	}
\end{example}

\begin{example}[Exact penalty formulations]\label{exa:ep}
	{\rm
		Consider a nonlinear optimization problem:
		\begin{align*}
			\min_x~ \{f(x): G(x)\in \mathcal{K}\},
		\end{align*}
		where $f\colon\R^d\to\R$ and $G\colon\R^d\to\R^m$ are smooth mappings and 
		$\mathcal{K}\subseteq \R^m$ is a closed convex cone.
		An accompanying {\em penalty formulation} -- ubiquitous in nonlinear optimization \cite{PG,burke_exact,CC,eremin,KNITRO} -- takes the form 
		$$\min_x~ f(x)+\lambda \cdot \theta_{\mathcal{K}}(G(x)),$$
		where $\theta_{\mathcal K}\colon\R^m\to\R$ is a nonnegative convex function that is zero only on $\mathcal{K}$ and $\lambda>0$
		is a penalty parameter. For example, $\theta_{\mathcal{K}}(y)$ is often the  distance  of $y$ to the convex cone $\mathcal{K}$ in some norm. This is an example of 
		\eqref{eqn:comp2} under the identification $c(x)=(f(x),G(x))$ and $h(f,G)=f+\lambda \theta_{\mathcal K}(G)$. 
	}
\end{example}

\begin{example}[Statistical estimation]\label{exa:huber}
	{\rm
		Often, one is interested in minimizing an error between a nonlinear process model $G(x)$ and observed data $b$ through a misfit measure $h$. The resulting problem takes the form
		$$\min_x~ h\big(b-G(x)\big)+g(x),$$
		where $g$ may be a convex surrogate encouraging prior structural information on $x$, such as the $l_1$-norm, squared $l_2$-norm, or the indicator of the nonnegative orthant. The misfit $h=\|\cdot\|_2$, in particular, appears in nonlinear least squares. The $l_1$-norm  $h=\|\cdot\|_1$ is used in the Least Absolute Deviations (LAD) technique in regression \cite{LAD1,LAD2},  Kalman smoothing with impulsive disturbances \cite{kalman}, and for robust phase retrieval \cite{duchi_ruan}.
		
		Another popular class of misfit measures $h$ is a sum $h=\sum_i h_{\kappa}(y_i)$ of Huber functions
		\[ h_{\kappa}(\tau)=\begin{cases} 
		\frac{1}{2\kappa} \tau^2 &, \tau\in [-\kappa,\kappa]\\
		|\tau| - \frac{\kappa}{2} &, \textrm{otherwise} 
		\end{cases}
		\]
		The Huber function figures prominently in robust regression \cite{Clark85,hub_num,Hub,LiW98}, being much less sensitive to outliers than the least squares penalty due to its linear tail growth. The function $h$ thus defined is smooth with $\lip(\nabla h)\sim 1/\kappa$. Hence, in particular, the term $h(b-G(x))$ can be treated as a smooth term reducing to the setting of additive composite minimization (Example~\ref{exa:comp}). On the other hand, we will see that because of the poor conditioning of the gradient $\nabla h$, methods that take into account the non-additive composite structure can have better efficiency estimates.
	}
\end{example}

\begin{example}[Grey-box minimization]\label{ex:grey}
	{\rm In industrial applications, one is often interested in functions that are available only {\em implicitly}. For example, function and derivative evaluations may require execution of an expensive simulation. Such problems often exhibit an underlying composite structure $h(c(x))$.
		The penalty function $h$ is known (and chosen) explicitly and is simple, whereas the  mapping $c(x)$ and the Jacobian $\nabla c(x)$ might only be available through a simulation. Problems of this type are sometimes called {\em grey-box minimization problems}, in contrast to black-box minimization. The explicit separation of the hard-to-compute mapping $c$ and the user chosen penalty $h$ can help in designing algorithms. See for example Conn-Scheinberg-Vicente \cite{DFO_book} and Wild \cite{pounders}, and references therein.
	}
\end{example}

\subsection{First-order stationary points for composite problems}\label{sec:subdiff_WC}
Let us now explain the goal of algorithms for the problem class \eqref{eqn:comp2}.
Since the optimization problem \eqref{eqn:comp2} is nonconvex, it is natural to seek points $x$ that are only first-order stationary. 
One makes this notion precise through subdifferentials (or generalized derivatives), which have a very explicit representation for our problem class. We recall here the relevant definitions, following the monographs of Mordukhovich \cite{mord1} and Rockafellar-Wets \cite{RW98}.

Consider an arbitrary function $f\colon\R^d\to\overline  \R$ and a point $\bar x$ with $f(\bar x)$ finite. The  {\em Fr\'{e}chet subdifferential} of $f$ at $\bar x$, denoted $\hat \partial f(\bar x)$, is the set of all vectors $v$ satisfying 
$$f(x)\geq f(\bar x)+\langle v,x-\bar x\rangle+o(\|x-\bar x\|)\qquad \textrm{ as }x\to \bar x.$$ Thus the inclusion $v\in\hat\partial f(\bar x)$ holds precisely when the affine function $x\mapsto f(\bar x)+\langle v,x-\bar x\rangle$ underestimates $f$ up to first-order near $\bar x$. In general, the limit of Fr\'{e}chet subgradients $v_i\in \hat\partial f(x_i)$, along a sequence $x_i\to\bar x$, may not be a Fr\'{e}chet subgradient at the limiting point $\bar x$. Hence, one formally enlarges the Fr\'{e}chet subdifferential and defines the {\em limiting subdifferential} of $f$ at $\bar x$, denoted $\partial f(\bar x)$, to consist of all vectors $v$ for which there exist sequences $x_i$ and $v_i$, satisfying $v_i\in \partial f(x_i)$ and $(x_i,f(x_i),v_i)\to (\bar x, f(\bar x),v)$. We say that $x$ is {\em stationary} for $f$ if the inclusion $0\in \partial f(x)$ holds. 

For convex functions $f$, the subdifferentials  $\hat \partial f(x)$ and $\partial f(x)$ coincide with the subdifferential in the sense of convex analysis \eqref{eqn:subgrad_conv}, while for $C^1$-smooth functions $f$, they consist only of the gradient $\nabla f(x)$.
Similarly, the situation simplifies for the composite problem class \eqref{eqn:comp2}: the two subdifferentials $\hat\partial F$ and $\partial F$ coincide and admit an intuitive representation through a chain-rule \cite[Theorem 10.6, Corollary 10.9]{RW98}.

\begin{theorem}[Chain rule]
	For the composite function $F$, defined in \eqref{eqn:comp2}, the Fr\'{e}chet and limiting subdifferentials coincide and admit the  representation
	$$\partial F(x)=\partial g(x)+\nabla c(x)^*\partial h(c(x)).$$
\end{theorem}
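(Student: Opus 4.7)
The plan is to reduce the computation to classical convex subdifferential calculus by exploiting the $C^1$-smoothness of $c$, and then combine the result with $\partial g$ via a sum rule. The crucial structural observation is that near any base point $\bar x$, the composition $h\circ c$ agrees to first order with the convex function
\[
\phi_{\bar x}(y):=h\bigl(c(\bar x)+\nabla c(\bar x)(y-\bar x)\bigr),
\]
so the Fr\'{e}chet subdifferential of $h\circ c$ at $\bar x$ is the convex subdifferential of $\phi_{\bar x}$ at $\bar x$, to which standard convex calculus applies.

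The first step is to verify $\hat{\partial}(h\circ c)(\bar x)=\nabla c(\bar x)^*\partial h(c(\bar x))$. Writing $c(y)=c(\bar x)+\nabla c(\bar x)(y-\bar x)+o(\|y-\bar x\|)$ and invoking the $L$-Lipschitz property of $h$,
\[
|h(c(y))-\phi_{\bar x}(y)|\le L\cdot o(\|y-\bar x\|)=o(\|y-\bar x\|).
\]
Thus $v\in\hat{\partial}(h\circ c)(\bar x)$ if and only if $v\in\hat{\partial}\phi_{\bar x}(\bar x)$, and since $\phi_{\bar x}$ is convex its Fr\'{e}chet subdifferential coincides with the convex subdifferential, which by the elementary convex chain rule (linear inner composition) equals $\nabla c(\bar x)^*\partial h(c(\bar x))$.

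The second step is to upgrade to the limiting subdifferential. Since $h$ is finite-valued and Lipschitz, the convex subdifferential $y\mapsto\partial h(y)$ is outer semicontinuous and uniformly bounded (by $L$), and $\nabla c$ is continuous; hence the set-valued map $x\mapsto\nabla c(x)^*\partial h(c(x))$ has closed graph. Given sequences $x_i\to\bar x$ and $v_i\in\hat{\partial}(h\circ c)(x_i)=\nabla c(x_i)^*\partial h(c(x_i))$ with $v_i\to v$, the closed-graph property forces $v\in\nabla c(\bar x)^*\partial h(c(\bar x))$. Combined with the trivial inclusion $\hat{\partial}\subseteq\partial$, this yields $\partial(h\circ c)(\bar x)=\hat{\partial}(h\circ c)(\bar x)=\nabla c(\bar x)^*\partial h(c(\bar x))$, i.e.\ $h\circ c$ is subdifferentially regular at $\bar x$.

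The final step is to combine with $g$. Because $g$ is convex, $\hat{\partial}g=\partial g$ coincides with the convex subdifferential, and because $h\circ c$ is locally Lipschitz (being a composition of a Lipschitz map with a $C^1$ map), the Mordukhovich sum rule yields $\partial F(\bar x)\subseteq\partial g(\bar x)+\partial(h\circ c)(\bar x)$; the reverse inclusion at the Fr\'{e}chet level follows from the definition since both summands are regular. Running the same closed-graph argument on the sum gives the equality $\hat{\partial}F(\bar x)=\partial F(\bar x)=\partial g(\bar x)+\nabla c(\bar x)^*\partial h(c(\bar x))$. The main obstacle is the regularity/sum-rule leg: one has to rule out the possibility that the limiting subdifferential is strictly larger than the sum of the pieces, and this is precisely what the uniform Lipschitz bound on $\partial h$ together with continuity of $\nabla c$ (yielding the closed graph and hence regularity of $h\circ c$) is designed to secure.
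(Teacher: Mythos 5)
Your proof is correct, but it is genuinely different from what the paper does: the paper offers no argument at all, simply citing the general chain and sum rules of Rockafellar--Wets [Theorem 10.6, Corollary 10.9], whose proofs run through coderivatives, normal cones to graphs, and a constraint qualification (automatic here because $h$ is finite-valued). You instead give a self-contained elementary argument that exploits the specific structure of the problem class: since $h$ is $L$-Lipschitz and $c$ is $C^1$, the composition $h\circ c$ agrees with its convex linearized model $\phi_{\bar x}$ up to $o(\|y-\bar x\|)$, which pins down the Fr\'{e}chet subdifferential exactly via convex calculus; the uniform bound $\|\partial h\|\le L$ plus continuity of $\nabla c$ then gives the closed-graph property needed to show the limiting subdifferential is no larger; and the sandwich
$\partial g(\bar x)+\nabla c(\bar x)^*\partial h(c(\bar x))\subseteq\hat\partial F(\bar x)\subseteq\partial F(\bar x)\subseteq\partial g(\bar x)+\partial(h\circ c)(\bar x)$
(the last inclusion being the limiting sum rule, valid because $h\circ c$ is locally Lipschitz) closes the loop. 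What your route buys is transparency and minimal machinery, at the cost of generality: it leans essentially on $h$ being globally Lipschitz, whereas the cited results cover non-Lipschitz outer functions under a qualification condition. One small remark: your closing sentence about ``running the same closed-graph argument on the sum'' is unnecessary and would not go through as stated, since $\partial g$ need not be locally bounded (e.g.\ $g$ an indicator); fortunately the chain of inclusions you have already assembled forces all of them to be equalities without it, so the proof stands.
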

In summary, the algorithms we consider aim to find stationary points of $F$, i.e. those points $x$ satisfying $0\in \partial F(x)$. 
In ``primal terms'', it is worth noting that a point $x$ is stationary for $F$ if and only if the directional derivative of $F$ at $x$ is nonnegative in every direction \cite[Proposition 8.32]{RW98}. More precisely, the equality holds:
\begin{equation}\label{eqn:subdif_direc_der}
	\dist(0;\partial F(x))=-\inf_{v:\, \|v\|\leq 1} F'( x;v),
\end{equation}
where $F'( x;v)$ is the directional derivative of $F$ at $x$ in direction $v$ \cite[Definition 8.1]{RW98}. 

\subsection{The prox-linear method}\label{sec:prox_lin_acc}
The basic algorithm we rely on for the composite problem class is the so-called prox-linear method. To motivate this scheme, let us first consider the setting of additive composite minimization \eqref{eqn:add_comp}.  
The most basic algorithm in this setting is the {\em proximal gradient method} \cite{beck,nest_conv_comp}
\begin{equation}\label{eqn:prox_grad}
	x_{k+1}:=\argmin_{x}~ \left\{c(x_k)+\langle \nabla c(x_k),x-x_k\rangle+g(x)+\frac{1}{2t}\|x-x_k\|^2\right\},
\end{equation}
or equivalently
$$x_{k+1}=\prox_{tg}\left(x_k-t\nabla c(x_k)\right).$$
Notice that an underlying assumption here is that the proximal map $\prox_{tg}$ is computable.

Convergence analysis of the prox-gradient algorithm derives from the fact that the function minimized in \eqref{eqn:prox_grad} is an upper model of $F$ whenever $t\leq\beta^{-1}$.  This majorization viewpoint quickly yields an algorithm for the entire problem class \eqref{eqn:comp2}. The so-called {\em prox-linear algorithm} iteratively linearizes the map $c$ and solves a proximal subproblem. To formalize the method, we use  the following notation.
For any points $z, y \in \R^d$ and a real $t > 0$, define 
\begin{align*}
	F (z; y)\, &:=\, g(z) +  h \Big (c(y) + \nabla c(y) (z-y) \Big ),\\
	F_t (z; y) \,&:=\, F(z;y) + \frac{1}{2t} \norm{z-y}^2,\\
	S_t(y)\,&:=\,\argmin_z~ F_t(z; y).
\end{align*}

Throughout the manuscript, we will routinely use the following estimate on the error in approximation $|F(z)-F(z;y)|$. We provide a quick proof for completeness.
\begin{lemma}\label{lem:upper_lower}
	For all $x,y\in \dom g$,  the inequalities hold:
	\begin{equation}\label{eqn:ineq_min_max}
		-\frac{\mu}{2}\|z-y\|^2\leq F(z)-F(z;y)\leq \frac{\mu}{2}\|z-y\|^2.
	\end{equation}
\end{lemma}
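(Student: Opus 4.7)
The plan is to observe that the $g(z)$ terms cancel in $F(z)-F(z;y)$, reducing the inequality to a one-sided and two-sided Lipschitz estimate on the composition $h \circ c$ versus $h$ applied to the linearization of $c$ at $y$. Explicitly,
\[
F(z) - F(z;y) \;=\; h(c(z)) \;-\; h\bigl(c(y)+\nabla c(y)(z-y)\bigr),
\]
so by the $L$-Lipschitz continuity of $h$ it suffices to bound
\[
\bigl\| c(z) - c(y) - \nabla c(y)(z-y) \bigr\| \;\le\; \frac{\beta}{2}\|z-y\|^2.
\]
That inequality is the classical descent/quadratic-approximation estimate for $C^1$ maps whose Jacobian is $\beta$-Lipschitz, so the whole argument is just two lines once we apply it.

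First I would write the integral remainder
\[
c(z)-c(y)-\nabla c(y)(z-y) \;=\; \int_0^1 \bigl(\nabla c(y+t(z-y))-\nabla c(y)\bigr)(z-y)\,dt,
\]
take operator norms inside the integral, and use $\|\nabla c(y+t(z-y))-\nabla c(y)\|_{\mathrm{op}}\le \beta t\|z-y\|$ to obtain the $\tfrac{\beta}{2}\|z-y\|^2$ bound after evaluating $\int_0^1 t\,dt = \tfrac{1}{2}$. Next I would combine this with the $L$-Lipschitz estimate
\[
\bigl| h(c(z)) - h(c(y)+\nabla c(y)(z-y)) \bigr| \;\le\; L\,\bigl\|c(z)-c(y)-\nabla c(y)(z-y)\bigr\|,
\]
yielding $|F(z)-F(z;y)|\le \tfrac{L\beta}{2}\|z-y\|^2 = \tfrac{\mu}{2}\|z-y\|^2$, which gives both the lower and upper bounds simultaneously.

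There is essentially no obstacle here; the only mild subtlety is that the statement uses $x,y$ in its quantifier but $z,y$ in the displayed inequality (clearly a typo: the quantifier should range over $z,y\in\dom g$, since $F(z;y)$ requires $z\in\dom g$ so that $g(z)$ is finite, and $y$ can in fact be any point in $\R^d$ because only $c(y)$ and $\nabla c(y)$ enter). I would simply state the lemma for $z,y\in\dom g$ and carry out the two-step bound above.
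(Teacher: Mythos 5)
Your proof is correct and follows exactly the same route as the paper's: cancel the $g(z)$ terms, apply the $L$-Lipschitz bound on $h$, and estimate the linearization error of $c$ via the integral remainder and the $\beta$-Lipschitz Jacobian to get $\tfrac{\beta}{2}\|z-y\|^2$. Your remark about the quantifier ($z,y$ versus $x,y$) correctly identifies a typo in the statement.
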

\begin{proof}
	Since $h$ is $L$-Lipschitz, we have $|F(z)-F(z;y)|\leq L\big\|c(z)-\big(c(y) + \nabla c(y) (z-y)\big)\big\|$.
	The fundamental theorem of calculus, in turn, implies
	\begin{equation*}
		\begin{aligned}
			\big\|c(z)-\big(c(y) + \nabla c(y) (z-y)\big)\big\|&=\left\|\int_{0}^1 \Big(\nabla c(y+t(z-y))-\nabla c(y)\Big)(z-y)\,dt\right\|\\
			&\leq \int_{0}^1 \left\|\nabla c(y+t(z-y))-\nabla c(y)\right\|_{\textrm{op}} \|z-y\|\,dt\\
			&\leq \beta \|z-y\|^2 \left(\int_{0}^1 t \, dt\right)=\frac{\beta}{2} \|z-y\|^2.
		\end{aligned}
	\end{equation*}
	The result follows.
\end{proof}

In particular, Lemma~\ref{lem:upper_lower} implies that $F_{t}(\cdot;y)$ is an upper model for $F$ for any $t\leq \mu^{-1}$, meaning $F_{t}(z;y)\geq F(z)$ for all points $y,z\in \dom g$.
The {\em prox-linear method}, formalized in Algorithm~\ref{alg: prox_lin}, is then simply the recurrence
$x_{k+1}=S_{t}(x_k).$ Notice that we are implicitly assuming here that the proximal subproblem \eqref{eqn:simple_prox} is solvable. We will discuss the impact of an inexact evaluation of $S_{t}(\cdot)$ in Section~\ref{sec:inex_prox_lin}.
Specializing to the  additive composite setting \eqref{eqn:add_comp}, equality
$S_t(x)=\prox_{tg}(x-t\nabla c(x))$
holds and the prox-linear method reduces to the familiar prox-gradient iteration \eqref{eqn:prox_grad}.

%
%
%
%
%
%
%

{\LinesNotNumbered
	\begin{algorithm}[h!]
		\SetKw{Null}{NULL}
		\SetKw{Return}{return}
		\Initialize{A point $x_0 \in \text{dom} \,
			g$ and a real $t>0$.}
		{\bf Step k:} ($k\geq 0)$
		Compute 
		\begin{equation}\label{eqn:simple_prox}
			x_{k+1}=\argmin_{x} \left\{g(x)+ h\Big(c(x_k)+\nabla c(x_k)(x-x_k)\Big)+\frac{1}{2t}\|x-x_k\|^2\right\}.
		\end{equation}
		\caption{Prox-linear method}
		\label{alg: prox_lin}
\end{algorithm}}

The convergence rate of the prox-linear method is best stated in terms of the {\em prox-gradient} mapping
$$\mathcal{G}_{t}(x):=t^{-1}(x-S_t(x)).$$
Observe that the optimality conditions for the proximal subproblem $\min_z F_t(z;x)$ read
$$\mathcal{G}_t(x)\in \partial g(S_t(x))+\nabla c(x)^*\partial h(c(x)+\nabla c(x)(S_t(x)-x)).$$
In particular, it is straightforward to check that with any $t>0$, a point $x$ is stationary for $F$ if and only if equality $\mathcal{G}_t(x)=0$ holds. Hence, the norm $\|\mathcal{G}_t(x)\|$ serves as a measure of ``proximity to stationarity''. In Section~\ref{subsec:stepsize_stat}, we will establish  a much more rigorous justification for why the norm $\|\mathcal{G}_{t}(x)\|$ provides a reliable basis for judging the quality of the point $x$.
Let us review here the rudimentary convergence guarantees of the method in terms of the prox-gradient, as presented for example in \cite[Section 5]{prox_error}. We provide a quick proof for completeness.
\begin{proposition}[Efficiency of the pure prox-linear method]\label{prop:basic_prox_lin}
	Supposing $t\leq \mu^{-1}$, the iterates generated by Algorithm~\ref{alg: prox_lin} satisfy
	\begin{align*}
		\min_{j=0, \hdots, N-1} \norm{\mathcal{G}_t(x_j)}^2  \le \frac{2t^{-1} \big ( F(x_0)-F^*\big)}{N},
	\end{align*}
	where we set $\displaystyle F^*:=\lim_{N\to\infty} F(x_N)$.
\end{proposition}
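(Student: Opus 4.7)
The plan is the standard "sufficient decrease plus telescoping" argument, using the fact that $F_t(\cdot;x_k)$ is a strongly convex upper model for $F$ whenever $t\le\mu^{-1}$.

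First I would establish a per-iteration descent inequality of the form
\[
F(x_k)-F(x_{k+1})\;\ge\;\tfrac{t}{2}\,\|\mathcal{G}_t(x_k)\|^2.
\]
To do this, observe two things. On the one hand, since $h$ and $g$ are convex, the function $z\mapsto F_t(z;x_k)$ is $t^{-1}$-strongly convex, and $x_{k+1}=S_t(x_k)$ is its minimizer. Applying strong convexity at the reference point $z=x_k$ gives
\[
F_t(x_k;x_k)\;\ge\;F_t(x_{k+1};x_k)+\tfrac{1}{2t}\|x_k-x_{k+1}\|^2.
\]
Noting that $F_t(x_k;x_k)=F(x_k)$ (the linearization of $c$ at $x_k$ is exact at $x_k$, and the quadratic term vanishes), this rewrites as
\[
F(x_k)\;\ge\;F_t(x_{k+1};x_k)+\tfrac{t}{2}\|\mathcal{G}_t(x_k)\|^2,
\]
where I used $\mathcal{G}_t(x_k)=t^{-1}(x_k-x_{k+1})$. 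On the other hand, by Lemma~\ref{lem:upper_lower}, for $t\le\mu^{-1}$ we have the upper-model inequality $F_t(z;y)\ge F(z)$ for all $z,y\in\dom g$; applied at $z=x_{k+1}$, $y=x_k$, this gives $F_t(x_{k+1};x_k)\ge F(x_{k+1})$. Chaining the two inequalities yields the desired descent bound.

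Next I would telescope. Summing the descent inequality over $j=0,\ldots,N-1$ gives
\[
\tfrac{t}{2}\sum_{j=0}^{N-1}\|\mathcal{G}_t(x_j)\|^2\;\le\;F(x_0)-F(x_N)\;\le\;F(x_0)-F^*,
\]
where the last step uses that $F(x_N)$ is nonincreasing in $N$ (by the same descent inequality) and hence is bounded below by $F^*=\lim_{N\to\infty}F(x_N)$. Since the minimum of the $\|\mathcal{G}_t(x_j)\|^2$ over $j=0,\ldots,N-1$ is no larger than the average, we conclude
\[
\min_{j=0,\ldots,N-1}\|\mathcal{G}_t(x_j)\|^2\;\le\;\frac{2t^{-1}\bigl(F(x_0)-F^*\bigr)}{N},
\]
which is the claim.

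There is no real obstacle here; the only subtlety is keeping track of the two roles the condition $t\le\mu^{-1}$ plays (it is needed to turn $F_t(\cdot;x_k)$ into a majorant of $F$, while strong convexity of $F_t(\cdot;x_k)$ holds for any $t>0$). One could equivalently carry out the descent step by combining Lemma~\ref{lem:upper_lower} directly with the defining inequality $F_t(x_{k+1};x_k)\le F_t(x_k;x_k)$, but using strong convexity gives the sharper constant $\tfrac{t}{2}$ rather than a smaller multiple.
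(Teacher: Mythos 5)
Your proposal is correct and follows exactly the paper's own argument: the $t^{-1}$-strong convexity of $F_t(\cdot;x_k)$ evaluated at $z=x_k$ gives $F(x_k)\ge F_t(x_{k+1};x_k)+\tfrac{t}{2}\|\mathcal{G}_t(x_k)\|^2$, the upper-model property from Lemma~\ref{lem:upper_lower} gives $F_t(x_{k+1};x_k)\ge F(x_{k+1})$, and then one telescopes and bounds the minimum by the average. Nothing to add.
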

\begin{proof}
	Taking into account that  $F_t(\cdot;x_k)$ is strongly convex with modulus $1/t$, we obtain
	\begin{align*}
		F(x_k) = F_t(x_k; x_k) &\ge  F_t(x_{k+1}; x_k)  + \tfrac{t}{2}
		\norm{\mathcal{G}_t(x_k)}^2 \ge F(x_{k+1})  + \tfrac{t}{2}
		\norm{\mathcal{G}_t(x_k)}^2.
	\end{align*}
	Summing the inequalities yields 
	\begin{align*}
		\min_{j=0, \hdots, N-1} \norm{\mathcal{G}_t(x_j)}^2  \le \frac{1}{N}
		\sum_{j=0}^{N-1}
		\norm{\mathcal{G}_t(x_j)}^2
		&\le \frac{2t^{-1} \big ( F(x_0)-F^*\big)}{N},
	\end{align*}
	as claimed.
\end{proof}

\section{Prox-gradient size $\|\mathcal{G}_{t}\|$ and approximate stationarity}\label{subsec:stepsize_stat}
Before continuing the algorithmic development, let us take a closer look at what the measure $\|\mathcal{G}_{t}(x)\|$ tells us about ``near-stationarity'' of the point $x$.
Let us first consider the additive composite setting \eqref{eqn:add_comp}, where the impact of the measure $\|\mathcal{G}_{t}(x)\|$ on near-stationarity is well-understood.
As discussed on page \pageref{alg: prox_lin}, the prox-linear method reduces to the prox-gradient recurrence
$$x_{k+1}=\prox_{g/\beta}\left(x_k-\tfrac{1}{\beta}\cdot\nabla  c(x_k)\right).$$
First-order optimality conditions for the proximal subproblem amount to
the inclusion
$$\mathcal{G}_{\frac{1}{\beta}}(x_k)\in \nabla c(x_k)+\partial g(x_{k+1}),$$
or equivalently
$$\mathcal{G}_{\frac{1}{\beta}}(x_k)+(\nabla c(x_{k+1})-\nabla c(x_k))\in \nabla c(x_{k+1})+\partial g(x_{k+1}).$$
Notice that the right-hand-side is exactly $\partial F(x_{k+1})$. Taking into account that $\nabla c$ is $\beta$-Lipschitz, we deduce
\begin{equation}\label{eqn:stat_est}
	\begin{aligned}
		\dist(0;\partial F(x_{k+1}))&\leq \|\mathcal{G}_{\frac{1}{\beta}}(x_k)\|+\|\nabla c(x_{k+1})-\nabla c(x_k)\|\\
		&\leq 2\|\mathcal{G}_{\frac{1}{\beta}}(x_k)\|.
	\end{aligned}
\end{equation}
Thus the inequality  $\|\mathcal{G}_{\frac{1}{\beta}}(x_k)\|\leq \varepsilon/2$ indeed guarantees that $x_{k+1}$ is nearly stationary for $F$ in the sense that $\dist(0;\partial F(x_{k+1}))\leq \varepsilon$. Taking into account \eqref{eqn:subdif_direc_der}, we deduce the bound on directional derivative $F'(x;u)\geq -\varepsilon$ in any unit direction $u$.
With this in mind, the guarantee of Proposition~\ref{prop:basic_prox_lin} specialized to the prox-gradient method can be found for example in \cite[Theorem 3]{nest_conv_comp}. 

The situation is dramatically different for the general composite class \eqref{eqn:comp2}. 
When $h$ is nonsmooth, the quantity $\dist(0;\partial F(x_{k+1}))$ will typically  not even tend to zero in the limit, in spite of $\|\mathcal{G}_{\frac{1}{\beta}}(x_k)\|$ tending to zero. 
For example, the prox-linear algorithm applied to the univariate function
$f(x)=|x^2-1|$ and initiated at  $x>1$, will generate a decreasing sequence $x_k\to 1$ with $f'(x_k)\to 2$.\footnote{Notice $f$ has three stationary points $\{-1,0,1\}$. Fix $y>1$ and observe that $x$ minimizes $ f_{t}(\cdot;y)$ if and only if $\tfrac{y-x}{2ty}\in\partial |\cdot|(y^2-1+2y(x-y))$.
	Hence $\tfrac{y-x}{2ty}\cdot(y^2-1+2y(x-y))\geq 0$.
	The inequality $x\leq 1$ would immediately imply a contradiction. Thus the inequality $x_0>1$ guarantees $x_k>1$ for all $k$. The claim follows.} 

Thus we must look elsewhere for an interpretation of the quantity $\|\mathcal{G}_{\frac{1}{\mu}}(x_k)\|$. We will do so by focusing on the Moreau envelope $x \mapsto F_{\frac{1}{2\mu}}(x)$ --- a function that serves as a $C^1$-smooth approximation of $F$ with the same stationary points.
We argue in Theorem~\ref{thm:prox_approx} that the norm of the prox-gradient $\|\mathcal{G}_{\frac{1}{\mu}}(x_k)\|$ is informative because $\|\mathcal{G}_{\frac{1}{\mu}}(x_k)\|$ is proportional to the norm of the true gradient of the Moreau envelope $\|\nabla F_{\frac{1}{2\mu}}(x)\|$. Before proving this result, we must first establish some basic properties of the Moreau envelope, which will follow from weak convexity of the composite function $F$; this is the content of the following section.

\subsection{Weak convexity and the Moreau envelope of the composition }
We will need the following standard definition.

\begin{defn}[Weak convexity]
	{\rm
		We say that a function  $f\colon\R^d\to\overline{\R}$ is $\rho$-{\em weakly convex on a set }$U$ if 
		for any  points $x,y\in U$  and $a\in[0,1]$, the approximate secant inequality holds: $$f(ax+(1-a)y)\leq a f(x)+(1-a)f(y)+\rho a(1-a)\|x-y\|^2.$$}
\end{defn} 

It is well-known that for a locally Lipschitz function $f\colon\R^d\to\R$, the following are equivalent; see e.g. \cite[Theorem 3.1]{fill_grap}.
\begin{enumerate}
	\item {\bf (Weak convexity)} $f$ is $\rho$-weakly convex on $\R^d$.
	\item {\bf (Perturbed convexity)} The function $f+\frac{\rho}{2}\|\cdot\|^2$ is convex on $\R^d$.
	\item {\bf(Quadratic lower-estimators)} For any  $x,y\in \R^d$  and $v\in \partial f(x)$, the inequality 
	$$f(y)\geq f(x)+\langle v,y-x\rangle -\frac{\rho}{2}\|y-x\|^2 \qquad \textrm{ holds}.$$
\end{enumerate}

In particular, the following is true.

\begin{lemma}[Weak convexity of the composition]\label{lem:prox_reg}\hfill\\
	The function $h\circ c$ is $\rho$-weakly convex on $\R^d$ for some $\rho\in [0,\mu]$.
\end{lemma}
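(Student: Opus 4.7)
The plan is to prove weak convexity by verifying the quadratic lower-estimator characterization listed above, with the constant $\rho = \mu = L\beta$. Since the three characterizations are equivalent for locally Lipschitz functions (and $h\circ c$ is locally Lipschitz because $h$ is globally Lipschitz and $c$ is $C^1$), this suffices.

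Concretely, fix points $x, y \in \R^d$ and a subgradient $v \in \partial(h\circ c)(x)$. By the chain rule discussed in Section~\ref{sec:subdiff_WC}, we may write $v = \nabla c(x)^* w$ for some $w \in \partial h(c(x))$. I would then chain together three ingredients: (i) the convexity of $h$, which yields the linear lower bound $h(c(y)) \geq h(c(x)) + \langle w, c(y)-c(x)\rangle$; (ii) the decomposition $\langle w, c(y)-c(x)\rangle = \langle \nabla c(x)^* w, y-x\rangle + \langle w, c(y)-c(x)-\nabla c(x)(y-x)\rangle$; and (iii) the bound $\|c(y)-c(x)-\nabla c(x)(y-x)\| \leq \frac{\beta}{2}\|y-x\|^2$, which is precisely the fundamental-theorem-of-calculus estimate already carried out inside the proof of Lemma~\ref{lem:upper_lower}. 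Combining these with the bound $\|w\|\leq L$ (a consequence of the $L$-Lipschitz continuity of $h$ and the standard fact that subgradients of Lipschitz convex functions are bounded by the Lipschitz constant) yields
\[
h(c(y)) \;\geq\; h(c(x)) + \langle v, y-x\rangle - \tfrac{L\beta}{2}\|y-x\|^2,
\]
which is exactly the quadratic lower-estimator inequality with modulus $\rho = \mu$.

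An alternative, even quicker route, is to invoke Lemma~\ref{lem:upper_lower} directly in the special case $g\equiv 0$: the lemma gives $h(c(z)) \geq h(c(y) + \nabla c(y)(z-y)) - \frac{\mu}{2}\|z-y\|^2$, and since the right-hand side (ignoring the quadratic) is convex in $z$, one can read off a quadratic lower estimator at $y$ by picking any $w\in\partial h(c(y))$ and using convexity of $h$. Either derivation produces the same conclusion, and I would likely present the first since it makes the role of the chain rule transparent.

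There is essentially no obstacle here: the only point that requires a moment's care is the justification $\|w\|\leq L$ for every $w\in \partial h(c(x))$, which is immediate from the Lipschitz assumption on $h$ (it was in fact already used implicitly at the end of the proof of Lemma~\ref{lem:lip_cont}). The statement $\rho \in [0,\mu]$ rather than $\rho = \mu$ simply reflects the fact that the argument gives $\mu$ as an upper bound on the weak convexity modulus, while the true infimal such $\rho$ lies somewhere in $[0,\mu]$ (and equals $0$ when $h\circ c$ happens to be convex).
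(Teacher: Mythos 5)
Your proposal is correct and follows essentially the same route as the paper's own proof: fix $v=\nabla c(x)^*w$ with $w\in\partial h(c(x))$, apply convexity of $h$, the estimate $\|c(y)-c(x)-\nabla c(x)(y-x)\|\le\frac{\beta}{2}\|y-x\|^2$, and the bound $\|w\|\le L$ to obtain the quadratic lower-estimator with modulus $\mu$. The paper's argument is exactly this chain of three inequalities, so there is nothing further to reconcile.
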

\begin{proof}
	To simplify notation, set $\Phi:=h\circ c$.
	Fix two points $x,y\in \R^d$ and a vector $v\in \partial \Phi(x)$.
	We can write $v=\nabla c(x)^*w$ for some vector $w\in \partial h(c(x))$. Taking into account convexity of $h$ and the inequality $\|c(y)-c(x)-\nabla c(x)(y-x)\|\leq \frac{\beta}{2}\|y-x\|^2$, we then deduce 
	\begin{align*}
		\Phi(y)=h(c(y))\geq h(c(x))+\langle w,c(y)-c(x)\rangle &\geq \Phi(x)+\langle w,\nabla c(x)(y-x)\rangle -\frac{\beta\|w\|}{2}\|y-x\|^2\\
		&\geq \Phi(x)+\langle v,y-x\rangle-\frac{\mu}{2}\|y-x\|^2.
	\end{align*}
	The result follows.
\end{proof}

Weak convexity of $F$ has an immediate consequence on the Moreau envelope $F_{\nu}$. 

\begin{lemma}[Moreau envelope of the composite function]\label{lem:mor_env}
	Fix  $\nu\in (0,1/\mu)$. Then the proximal map $\prox_{\nu F}(x)$ is well-defined and single-valued, while the Moreau envelope $F_{\nu}$ is $C^1$-smooth with gradient
	\begin{equation}\label{eqn:der_prox}
		\nabla F_{\nu}(x)=\nu^{-1}(x-\prox_{\nu F}(x)).
	\end{equation}
	Moreover,  stationary points of $F_{\nu}$ and of $F$ coincide.
\end{lemma}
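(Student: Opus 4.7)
The strategy is to reduce everything to the convex case by exploiting Lemma~\ref{lem:prox_reg}. Since $g$ is closed and convex and $h\circ c$ is $\rho$-weakly convex with $\rho\in[0,\mu]$, the augmented function $\tilde F(z):=F(z)+\tfrac{\rho}{2}\|z\|^2$ is closed, proper, and convex. The hypothesis $\nu<1/\mu$ gives $\rho\nu<1$, which is what makes the reduction go through.

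First I would verify single-valuedness. Writing the prox objective as
\[
F(z)+\tfrac{1}{2\nu}\|z-x\|^2=\tilde F(z)+\left(\tfrac{1}{2\nu}\|z-x\|^2-\tfrac{\rho}{2}\|z\|^2\right),
\]
the first summand is convex and the second is a quadratic with Hessian $(\tfrac{1}{\nu}-\rho)I\succ 0$, so the objective is strongly convex with modulus $\tfrac{1}{\nu}-\rho>0$. Combined with lower semicontinuity of $F$, this forces a unique minimizer, so $\prox_{\nu F}(x)$ is well-defined and single-valued. Next, to get $C^1$-smoothness and the gradient formula, I would complete the square in $z$ in the display above to obtain an identity of the form
\[
F(z)+\tfrac{1}{2\nu}\|z-x\|^2=\tilde F(z)+\tfrac{1}{2\tilde\nu}\left\|z-\tfrac{x}{1-\rho\nu}\right\|^2+\Psi(x),
\]
where $\tilde\nu:=\tfrac{\nu}{1-\rho\nu}>0$ and $\Psi$ is a smooth quadratic. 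Minimizing in $z$ yields
\[
F_\nu(x)=\tilde F_{\tilde\nu}\!\left(\tfrac{x}{1-\rho\nu}\right)+\Psi(x),\qquad \prox_{\nu F}(x)=\prox_{\tilde\nu\tilde F}\!\left(\tfrac{x}{1-\rho\nu}\right).
\]
Since $\tilde F$ is closed convex, Lemma~\ref{lem:lip_cont} applied to $\tilde F$ gives $C^1$-smoothness of $\tilde F_{\tilde\nu}$ together with the formula $\nabla\tilde F_{\tilde\nu}(y)=\tilde\nu^{-1}(y-\prox_{\tilde\nu\tilde F}(y))$. Differentiating the displayed identity via the chain rule and simplifying the resulting affine expression in $x$ (the $\Psi$ contribution cancels the extraneous linear terms) yields exactly $\nabla F_\nu(x)=\nu^{-1}(x-\prox_{\nu F}(x))$.

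For the last claim, the gradient formula reduces the equivalence of stationary points to showing $\prox_{\nu F}(x)=x$ iff $0\in\partial F(x)$. The forward implication is immediate from the Fr\'echet optimality condition for the strongly convex prox subproblem, which gives $0\in\hat\partial F(x)\subseteq\partial F(x)$. For the reverse, I would invoke the quadratic-lower-estimator characterization of $\rho$-weak convexity (listed right after the definition): $0\in\partial F(x)$ implies $F(z)\ge F(x)-\tfrac{\rho}{2}\|z-x\|^2$, hence $F(z)+\tfrac{1}{2\nu}\|z-x\|^2\ge F(x)+(\tfrac{1}{2\nu}-\tfrac{\rho}{2})\|z-x\|^2\ge F(x)$, forcing $z=x$ at the unique minimizer. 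The only real obstacle is bookkeeping: carrying the reparametrization $x\mapsto x/(1-\rho\nu)$ cleanly through the chain rule, and being careful that the stationarity equivalence uses the limiting subdifferential (which for weakly convex functions agrees with the Fr\'echet one on the set where the Fr\'echet subdifferential is nonempty).
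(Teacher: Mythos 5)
Your proof is correct, but it takes a genuinely different route from the paper's. The paper disposes of the smoothness of $F_\nu$ and the formula \eqref{eqn:der_prox} in a single line by combining weak convexity (Lemma~\ref{lem:prox_reg}) with an external theorem on prox-regular functions (\cite[Theorem 4.4]{prox_reg}), and then obtains the coincidence of stationary points by characterizing the minimizer of the strongly convex function $\varphi(z)=F(z)+\tfrac{1}{2\nu}\|z-x\|^2$ through the inclusion $\nu^{-1}(x-z)\in\partial F(z)$. You instead make the argument self-contained: the tilt $\tilde F=F+\tfrac{\rho}{2}\|\cdot\|^2$ is convex, completing the square identifies $F_\nu$ with $\tilde F_{\tilde\nu}\bigl(\cdot/(1-\rho\nu)\bigr)$ plus an explicit quadratic $\Psi(x)=-\tfrac{\rho\|x\|^2}{2(1-\rho\nu)}$, and the convex case plus the chain rule delivers smoothness and, after the cancellation you describe (which checks out: $\tfrac{1}{\nu(1-\rho\nu)}x-\tfrac{\rho}{1-\rho\nu}x=\tfrac{x}{\nu}$), the gradient formula. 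What your approach buys is independence from the prox-regularity machinery, at the price of some bookkeeping; what the paper's buys is brevity. Two points to tighten. First, Lemma~\ref{lem:lip_cont} is stated for finite-valued $f\colon\R^d\to\R$, whereas $\tilde F$ may take the value $+\infty$; you should invoke the underlying fact for proper closed convex functions directly (it is \cite[Theorem 31.5]{rock}, which the paper's own proof of Lemma~\ref{lem:lip_cont} cites and which requires no finiteness). Second, the quadratic-lower-estimator characterization of weak convexity is stated for locally Lipschitz functions, which $F$ need not be; but writing $F=g+(h\circ c)$, using convexity of $g$ together with the lower estimator for $h\circ c$ (established in the proof of Lemma~\ref{lem:prox_reg}), and the chain rule $\partial F(x)=\partial g(x)+\nabla c(x)^*\partial h(c(x))$, one gets $F(z)\ge F(x)-\tfrac{\rho}{2}\|z-x\|^2$ whenever $0\in\partial F(x)$, which is all your reverse implication uses.
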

\begin{proof}
	Fix  $\nu\in (0,1/\mu)$.
	Lemma~\ref{lem:prox_reg} together with \cite[Theorem 4.4]{prox_reg} immediately imply that $\prox_{\nu F}(x)$ is well-defined and single-valued, while the Moreau envelope $F_{\nu}$ is $C^1$-smooth with gradient given by \eqref{eqn:der_prox}.  Equation \eqref{eqn:der_prox} then implies that $x$ is  stationary for $F_{\nu}$ if and only if 
	$x$ minimizes the function $\varphi(z):= F(z)+\frac{1}{2\nu}\|z-x\|^2$. Lemma~\ref{lem:prox_reg} implies that $\varphi$ is strongly convex, and therefore the unique minimizer $z$ of $\varphi$ is characterized by $\nu^{-1}(x-z)\in \partial F(z)$. Hence stationary points of $F_{\nu}$ and of $F$ coincide. 
\end{proof}

Thus  for $\nu\in (0,1/\mu)$, stationary points of $F$ coincide with those of the $C^1$-smooth function $F_{\nu}$. More useful would be to understand the impact of $\|\nabla F_{\nu}(x)\|$ being small, but not zero.
To this end, observe the following.
Lemma~\ref{lem:mor_env} together with the definition of the Moreau envelope implies that for any $x$, the point $\hat x:=\prox_{\nu F}(x)$ satisfies

\begin{equation}\label{eqn:perturb_opt_orig}
	\left\{\begin{array}{cl}
		\|\hat{x}-x\|&\leq  \nu\|\nabla F_{\nu}(x)\|,\\ 
		F(\hat x)&\leq F(x),\\
		\dist(0;\partial F(\hat{x}))&\leq \|\nabla F_{\nu}(x)\|.
	\end{array}\right. 
\end{equation}
Thus a small gradient $\|\nabla F_{\nu}(x)\|$ implies that $x$ is {\em near} a point $\hat x$ that is {\em nearly stationary} for $F$.


\subsection{Prox-gradient and the gradient of the Moreau envelope}
The final ingredient we need to prove Theorem~\ref{thm:prox_approx} is the following lemma \cite[Theorem 2.4.1]{Borwein-Zhu}; we provide a short proof for completeness. 
\begin{lemma}[Quadratic penalization principle]\label{lem:smooth_var_princ}
	Consider a closed function $f\colon\R^d\to\overline\R$ and suppose the inequality $f(x)-\inf f\leq \varepsilon$ holds for some point $x$ and real $\ve>0$. Then for any $\lambda>0$, the inequality holds:
	$$\|\lambda^{-1}(x-\prox_{\lambda f}(x))\|\leq  \sqrt{\frac{2\varepsilon}{\lambda}}$$
	If $f$ is $\alpha$-strongly convex (possibly with $\alpha=0$), then the estimate improves to 
	$$\|\lambda^{-1}(x-\prox_{\lambda f}(x))\|\leq  \sqrt{\frac{\varepsilon}{\lambda(1+\tfrac{\lambda\alpha}{2})}}.$$
\end{lemma}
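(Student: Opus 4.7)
The plan is to exploit the defining variational inequality of the proximal map and compare it against the suboptimality bound $f(x)-\inf f \leq \varepsilon$. This is a short computation with essentially no ``hard part''; the main point is just to keep track of the strong-convexity constant in the second statement.

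First I set $\hat{x} \in \prox_{\lambda f}(x)$ (any element, in the nonconvex case). By definition, $\hat{x}$ minimizes the function $z \mapsto f(z) + \frac{1}{2\lambda}\|z - x\|^2$. Evaluating this minimization inequality at $z = x$ yields
\[
f(\hat{x}) + \frac{1}{2\lambda}\|\hat{x} - x\|^2 \leq f(x).
\]
Rearranging and using $f(\hat{x}) \geq \inf f$ together with the hypothesis $f(x) - \inf f \leq \varepsilon$ gives
\[
\frac{1}{2\lambda}\|\hat{x} - x\|^2 \leq f(x) - f(\hat{x}) \leq \varepsilon.
\]
Dividing through by $\lambda$ and taking square roots yields the first bound $\|\lambda^{-1}(x - \hat{x})\| \leq \sqrt{2\varepsilon/\lambda}$.

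For the second statement, suppose $f$ is $\alpha$-strongly convex. Then the function $z \mapsto f(z) + \frac{1}{2\lambda}\|z-x\|^2$ is strongly convex with modulus $\alpha + \lambda^{-1}$. Strong convexity at its minimizer $\hat{x}$, evaluated at $z = x$, gives
\[
f(x) \geq f(\hat{x}) + \frac{1}{2\lambda}\|\hat{x} - x\|^2 + \frac{\alpha + \lambda^{-1}}{2}\|x - \hat{x}\|^2 = f(\hat{x}) + \left(\frac{1}{\lambda} + \frac{\alpha}{2}\right)\|x - \hat{x}\|^2.
\]
Combining with $f(x) - f(\hat{x}) \leq \varepsilon$ yields $\|x-\hat{x}\|^2 \leq \varepsilon\lambda/(1 + \lambda\alpha/2)$, and dividing by $\lambda^2$ gives the sharper bound. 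The case $\alpha = 0$ is handled identically and recovers a factor-of-$2$ improvement over the first part (the convex case is essentially free since the prox is single-valued and the minimization inequality can be strengthened).

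The only mild subtlety is that in the first (nonconvex) part, $\prox_{\lambda f}(x)$ may not be single-valued or even nonempty a priori; however, since $\inf f$ is finite by hypothesis, the function $z \mapsto f(z) + \frac{1}{2\lambda}\|z - x\|^2$ is bounded below, and the closedness of $f$ together with coercivity of the quadratic penalty ensures the argmin set is nonempty, so the argument applies to any selection.
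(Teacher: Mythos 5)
Your proof is correct and follows essentially the same route as the paper's: evaluate the defining minimization property of $\prox_{\lambda f}(x)$ at $z=x$, combine with $f(\hat x)\geq \inf f$ and $f(x)-\inf f\leq\varepsilon$, and in the strongly convex case add the $(\alpha+\lambda^{-1})$-strong-convexity term at the minimizer. Your extra remark on nonemptiness of the argmin in the nonconvex case is a small refinement the paper leaves implicit; nothing else differs.
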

\begin{proof}
	Fix a point $\displaystyle y\in\argmin_z \left\{f(z)+\frac{1}{2\lambda}\|z-x\|^2\right\}$. We deduce
	$$f(y)+\frac{1}{2\lambda}\|y-x\|^2\leq f(x)\leq f^*+\ve\leq f(y)+\ve.$$
	Hence we deduce $\lambda^{-1}\|y-x\|\leq \sqrt{\frac{2\varepsilon}{\lambda}}$, as claimed. 
	If $f$ is $\alpha$-strongly convex, then the function
	$z\mapsto f(z)+\frac{1}{2\lambda}\|z-x\|^2$ is $(\alpha+\lambda^{-1})$-strongly convex and therefore
	$$\left(f(y)+\frac{1}{2\lambda}\|y-x\|^2\right)+\frac{\lambda^{-1}+\alpha}{2}\|y-x\|^2\leq f(x)\leq f^*+\ve\leq f(y)+\ve.$$
	The claimed inequality follows along the same lines.
\end{proof}

We can now quantify the precise relationship between the norm of the prox-gradient $\|\mathcal{G}_{t}(x)\|$ and the norm of the true gradient of the Moreau envelope $\|\nabla F_{\frac{t}{1+t \mu}}(x)\|$.

\begin{theorem}[Prox-gradient and near-stationarity]\label{thm:prox_approx}
	For any point $x$ and real constant $t > 0$, the inequality holds: 
	\begin{equation}\label{eqn:comp_prox_lin_reg}
		\tfrac{1}{(1+\mu t) (1+ \sqrt{\mu t})} \left\|\nabla F_{\frac{t}{1+t\mu}}(x)\right\|\leq \left\|\mathcal{G}_{t}(x)\right\|\leq \tfrac{1+2 t \mu }{1+t \mu} \left ( \sqrt{\tfrac{t \mu }{1 + t \mu} } +1 \right ) \left\|\nabla F_{\frac{t}{1+t\mu}}(x)\right\|.
	\end{equation}
\end{theorem}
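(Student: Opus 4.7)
The plan is to interpret both $\hat x := \prox_{\nu F}(x)$ (with $\nu := t/(1+t\mu)$) and $y_2 := S_t(x)$ as minimizers of two strongly convex functions whose pointwise discrepancy is well controlled. Specifically, set
\begin{equation*}
\varphi_1(z) := F(z) + \tfrac{1}{2\nu}\|z-x\|^2 \qquad\text{and}\qquad \varphi_2(z) := F_t(z;x),
\end{equation*}
with minimizers $\hat x$ and $y_2$ respectively. The choice of $\nu$ is dictated precisely by the requirement that both $\varphi_i$ be $\tfrac{1}{t}$-strongly convex: for $\varphi_2$ this follows directly from convexity of $F(\cdot;x)$, while for $\varphi_1$ one combines the $\mu$-weak convexity of $F$ (Lemma~\ref{lem:prox_reg}) with the identity $\tfrac{1}{\nu} - \mu = \tfrac{1}{t}$. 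Applying Lemma~\ref{lem:upper_lower} pointwise yields the sandwich $0 \leq \varphi_1(z) - \varphi_2(z) \leq \mu\|z-x\|^2$.

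The workhorse is then to add the two strong-convexity inequalities at the respective minimizers, namely $\varphi_1(y_2) - \varphi_1(\hat x) \geq \tfrac{1}{2t}\|y_2-\hat x\|^2$ and $\varphi_2(\hat x) - \varphi_2(y_2) \geq \tfrac{1}{2t}\|y_2-\hat x\|^2$. Summing and using the pointwise bound on $\varphi_1 - \varphi_2$ produces the central estimate
\begin{equation*}
\tfrac{1}{t}\|y_2-\hat x\|^2 \leq (\varphi_1-\varphi_2)(y_2) - (\varphi_1-\varphi_2)(\hat x) \leq \mu\|y_2-x\|^2.
\end{equation*}
Abbreviating $u := y_2 - x$ and $v := \hat x - x$, this reads $\|u-v\| \leq \sqrt{t\mu}\,\|u\|$.

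From here the left inequality is essentially immediate. The triangle inequality yields $\|v\| \leq \|u\| + \|u-v\| \leq (1+\sqrt{t\mu})\|u\|$, and combining this with the identifications $\|\nabla F_\nu(x)\| = \|v\|/\nu$ (Lemma~\ref{lem:mor_env}) and $\|\mathcal{G}_t(x)\| = \|u\|/t$ produces exactly the claimed factor $(1+t\mu)(1+\sqrt{t\mu})$.

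The right inequality, which reverses the roles and bounds $\|u\|$ in terms of $\|v\|$, is the main obstacle: a naive rearrangement of $\|u-v\| \leq \sqrt{t\mu}\,\|u\|$ through the triangle inequality yields only $\|u\| \leq \|v\|/(1-\sqrt{t\mu})$, which breaks down whenever $t\mu \geq 1$. The plan is instead to expand $\|u-v\|^2 \leq t\mu\,\|u\|^2$ via the identity $\|u\|^2 = \|v\|^2 + 2\langle v, u-v\rangle + \|u-v\|^2$, producing a quadratic inequality for $\|u-v\|$ whose coefficients involve only $\|v\|$ and $t\mu$. Because the rescaled parameter $\nu\mu = t\mu/(1+t\mu)$ is bounded away from $1$ uniformly in $t$, this quadratic can be solved for every $t \geq 0$; translating the resulting bound through the identifications $\|\nabla F_\nu(x)\| = \|v\|/\nu$ and $\|\mathcal{G}_t(x)\|=\|u\|/t$ then delivers the constant $\tfrac{1+2t\mu}{1+t\mu}\bigl(1+\sqrt{t\mu/(1+t\mu)}\bigr)$ stated in the theorem.
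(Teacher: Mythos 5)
Your derivation of the \emph{left} inequality is correct and is a clean repackaging of the paper's first-half argument: summing the two $\tfrac1t$-strong-convexity inequalities for $\varphi_1$ and $\varphi_2$ at each other's minimizers and invoking the sandwich $0\le\varphi_1-\varphi_2\le\mu\|\cdot-x\|^2$ yields exactly the estimate $\|\bar x-\hat x\|\le\sqrt{t\mu}\,\|\bar x-x\|$ that the paper extracts via the quadratic penalization principle (Lemma~\ref{lem:smooth_var_princ} with $\alpha=0$) applied to the function $\zeta$. From there the triangle inequality gives the stated constant $(1+\mu t)(1+\sqrt{\mu t})$.

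The proposed route to the \emph{right} inequality, however, has a genuine gap. Your only quantitative input is $\|u-v\|\le\sqrt{t\mu}\,\|u\|$, and this single inequality cannot bound $\|u\|$ by $\|v\|$ once $t\mu\ge 1$: expanding as you suggest gives $(1-t\mu)\|u-v\|^2\le t\mu\|v\|^2+2t\mu\langle v,u-v\rangle$, whose leading coefficient is nonpositive for $t\mu\ge1$, so the quadratic imposes no constraint (indeed, for $v=0$ and $t\mu\ge1$ the inequality $\|u-v\|^2\le t\mu\|u\|^2$ is satisfied by \emph{every} $u$). Even for $t\mu<1$ the resulting bound $\|u\|\le\|v\|/(1-\sqrt{t\mu})$ blows up as $t\mu\to1$, whereas the theorem's constant stays bounded. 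Your remark that ``the rescaled parameter $\nu\mu=t\mu/(1+t\mu)$ is bounded away from $1$'' misidentifies the parameter: nothing in your manipulation replaces $t\mu$ by $\nu\mu$; that replacement is precisely the nontrivial content of the second half of the paper's proof. There, one introduces the auxiliary function $\Psi(z)=F_t(z;x)-\tfrac{1}{2t}\|\hat x-z\|^2+\mu\|z-x\|^2$, which is $2\mu$-strongly convex, applies the \emph{strongly convex} refinement of the quadratic penalization principle (this is where the factor $\sqrt{t\mu/(1+t\mu)}=\sqrt{\nu\mu}<1$ actually enters), and then relates the auxiliary minimizer $\hat z=\prox_{t\Psi}(\hat x)$ back to $\bar x$ through the inclusion $2\mu(x-\hat z)\in\partial F_t(\hat z;x)$ and strong convexity of $F_t(\cdot;x)$. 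Some analogue of this extra structure is needed; the symmetric two-minimizer trick alone does not suffice.
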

\begin{proof}
	To simplify notation, throughout the proof set 
	\begin{equation*}
		\begin{aligned}
			{\bar x}&:=S_{t}(x)=\argmin_z  F_{t}(z;x),\\
			\hat x&:=\prox_{\tfrac{t F}{1+t \mu}}(x)=\argmin_z~ \{F(z)+ \tfrac{\mu + t^{-1}}{2} \|z-x\|^2\}.
		\end{aligned}
	\end{equation*}
	Notice that $\hat x$ is well-defined by Lemma~\ref{lem:mor_env}.
	
	We begin by establishing the first inequality in \eqref{eqn:comp_prox_lin_reg}.	
	For any point $z$, we successively deduce
	\begin{equation}\label{eqn:lower-BD}
		\begin{aligned}
			F(z)&\geq F_t(z;x)-\tfrac{\mu + t^{-1}}{2} \|z-x\|^2\geq F_{t}({\bar x};x)+\tfrac{1}{2t}\|{\bar x}-z\|^2-\tfrac{\mu + t^{-1}}{2} \|z-x\|^2\\
			&\geq F({\bar x})+\frac{1}{2t}\|{\bar x}-z\|^2-\tfrac{\mu + t^{-1}}{2} \|z-x\|^2 + \tfrac{t^{-1}-\mu}{2} \|{\bar x}-x \|^2,
		\end{aligned}
	\end{equation}
	where the first and third inequalities follow from \eqref{eqn:ineq_min_max} and the second from strong convexity of $F_{t}(\cdot;x)$.
	
	Define the function $\zeta(z):=F(z)+\tfrac{\mu + t^{-1}}{2}\|z-x\|^2-\frac{1}{2t}\|{\bar x}-z\|^2$ and notice that $\zeta$ is convex by Lemma~\ref{lem:prox_reg}.
	Inequality \eqref{eqn:lower-BD} directly implies
	$$\zeta({\bar x})-\inf \zeta \leq \left (F({\bar x})+\tfrac{\mu + t^{-1}}{2} \|{\bar x}-x\|^2\right )- \left ( F({\bar x}) + \tfrac{t^{-1}-\mu}{2} \| {\bar x} -x \|^2 \right )= \mu\|{\bar x}-x\|^2.$$
	Notice the relation,
	$\prox_{t\zeta}({\bar x})=\prox_{\frac{tF}{1+ t\mu}}(x)=\hat x$. Setting $\lambda:=t$ and $\ve:=\mu\|{\bar x}-x\|^2$ and using Lemma~\ref{lem:smooth_var_princ} (convex case $\alpha=0$) with ${\bar x}$ in place of $x$, we conclude
	$$\sqrt{\tfrac{\mu}{t}}\|{\bar x}-x\|\geq \|t^{-1}({\bar x}-\prox_{t\zeta}({\bar x}))\|=\|t^{-1}({\bar x}-\hat x)\|\geq  \|t^{-1}(x-\hat x)\|-\|t^{-1}({\bar x}-x)\|.$$
	Rearranging and using \eqref{eqn:der_prox} yields the first inequality in  \eqref{eqn:comp_prox_lin_reg}, as claimed. 
	
	We next establish the second inequality in \eqref{eqn:comp_prox_lin_reg}. The argument is in the same spirit as the previous part of the proof.
	For any point $z$, we successively deduce
	\begin{equation}\label{eqn:second_part_main_ineq}
		\begin{aligned}
			F_{t}(z;x)  &\geq (F(z)+\tfrac{\mu + t^{-1}}{2} \|z-x\|^2)-\mu \|z-x\|^2\\
			&\geq F(\hat x)+\tfrac{\mu+t^{-1}}{2} \|\hat x-x\|^2+\tfrac{1}{2t}\|\hat x-z\|^2-\mu\|z-x\|^2,
		\end{aligned}
	\end{equation}
	where the first inequality follows from \eqref{eqn:ineq_min_max} and the second from $t^{-1}$-strong convexity of $z\mapsto F(z)+\tfrac{\mu+t^{-1}}{2}\|z-x\|^2$.
	Define now the function
	$$\Psi(z):=F_{t}(z;x)-\tfrac{1}{2t}\|\hat x-z\|^2+ \mu\|z-x\|^2.$$
	Combining \eqref{eqn:ineq_min_max} and \eqref{eqn:second_part_main_ineq}, we deduce 
	$$\Psi(\hat x)-\inf \Psi\leq \Big(F_{t}(\hat x;x)+\mu\|\hat x -x\|^2\Big) - \Big(F(\hat x)+\tfrac{\mu + t^{-1}}{2}\|\hat x-x\|^2\Big)\leq \mu\|\hat x-x\|^2.$$
	
	Notice that $\Psi$ is strongly convex with parameter $\alpha:=2\mu$.	
	Setting $\varepsilon:=\mu\|\hat x-x\|^2$ and $\lambda=t$, and applying Lemma~\ref{lem:smooth_var_princ} with $\hat x$ in place of $x$, we deduce
	\begin{equation}\label{eqn:est_part_2_prox}
		\sqrt{\tfrac{\mu}{t(1+t\mu)}} \|\hat x -x\|\geq \|t^{-1}(\hat x - \prox_{t\Psi}(\hat x))\|\geq \|t^{-1}(x-\prox_{t\Psi}(\hat x))\|-\|t^{-1}(\hat x-x)\|.
	\end{equation}
	To simplify notation, set $\hat z:=\prox_{t\Psi}(\hat x)$.
	By definition of $\Psi$, equality $$\hat z=\argmin_{z}~\left\{ F_{t}(z;x)+\mu\|z-x\|^2\right\}\qquad \textrm{ holds},$$
	and therefore $2\mu(x-\hat z)\in \partial F_{t}(\hat z;x)$. Taking into account that $F_{t}(\cdot;x)$ is $t^{-1}$-strongly convex, we deduce
	$$\|2\mu(x-\hat z)\|\geq \dist\left(0; \partial F_{t}(\hat z;x)\right)\geq t^{-1} \|\hat z -{\bar x}\|\geq \|t^{-1}(x-{\bar x})\|-\|t^{-1}(x-\hat z)\|.$$
	Rearranging and combining the estimate with \eqref{eqn:der_prox}, \eqref{eqn:est_part_2_prox} yields the second inequality in	\eqref{eqn:comp_prox_lin_reg}.
\end{proof}

In the most important setting $t = 1/\mu$, Theorem~\ref{thm:prox_approx} reduces to the estimate
\begin{equation}\label{eqn:thm_speci}
	\tfrac{1}{4} \left\|\nabla F_{\frac{1}{2\mu}}(x)\right\|\leq \left\|\mathcal{G}_{1/\mu}(x)\right\|\leq \tfrac{3}{2}\left(1+\tfrac{1}{\sqrt{2}}\right)\left\|\nabla F_{\frac{1}{2\mu}}(x)\right\|.
\end{equation} 
A closely related result has recently appeared in \cite[Theorem 5.3]{prox_error}, with a different proof, and has been extended to a more general class of Taylor-like approximations in \cite{taylor}. Combining \eqref{eqn:thm_speci}  and \eqref{eqn:perturb_opt_orig} we deduce that for any point $x$, there exists a point $\hat x$ (namely $\hat x=\prox_{F/2\mu}(x))$) satisfying
\begin{equation}\label{eqn:perturb_opt}
	\left\{\begin{array}{cl}
		\|\hat{x}-x\|&\leq  \frac{2}{\mu} \|\mathcal{G}_{1/\mu}(x)\|,\\ 
		F(\hat x)&\leq F(x),\\
		\dist(0;\partial F(\hat{x}))&\leq 4 \|\mathcal{G}_{1/\mu}(x)\|.
	\end{array}\right. 
\end{equation}
Thus if $\|\mathcal{G}_{1/\mu}(x)\|$ is small, the point $x$ is ``near'' some point $\hat x$ that is ``nearly-stationary'' for $F$. Notice that $\hat x$ is not computable, since it requires evaluation of $\prox_{F/2\mu}$. Computing  $\hat x$ is not the point, however; the sole purpose of $\hat x$ is to certify that $x$ is approximately stationary in the sense of \eqref{eqn:perturb_opt}. 

\section{Inexact analysis of the prox-linear method}\label{sec:inex_prox_lin}
In practice, it is often impossible to solve the proximal subproblems $\min_z F_{t}(z;y)$ exactly. In this section, we explain the effect of inexactness in the proximal subproblems \eqref{eqn:simple_prox} on the overall performance of the prox-linear algorithm. By ``inexactness'', one can mean a variety of concepts.  Two most natural ones are that of
$(i)$ terminating the subproblems based on near-optimality in function value and $(ii)$ terminating based on ``near-stationarity''. 

Which of the two criteria is used depends on the algorithms that are available for solving the proximal subproblems. If primal-dual interior-point methods are applicable, then termination based on near-optimality in function value is most appropriate. 
When the subproblems themselves can only be solved by first-order methods, the situation is less clear. 
In particular, if near-optimality in function value is the goal, then one must use saddle-point methods.
Efficiency estimates of saddle-point algorithms, on the other hand, depend on the diameter of the feasible region, rather than on the quality of the initial iterate (e.g. distance of initial iterate to the optimal solution). Thus saddle-point methods cannot be directly warm-started, that is one cannot easily use iterates from previous prox-linear subproblems to speed up the algorithm for the current subproblem. Moreover, there is a conceptual incompatibility of the prox-linear method with termination based on functional near-optimality. Indeed, the prox-linear  method seeks to make the stationarity measure $\|\mathcal{G}_t(x)\|$ small, and so it seems more fitting that the proximal subproblems are solved based on near-stationarity themselves.
In this section, we consider both termination criteria. The arguments are quick modifications of the proof of Proposition~\ref{prop:basic_prox_lin}.


\subsection{Near-optimality in the subproblems}\label{sec:near-optim}
We first consider the effect of solving the proximal subproblems up to a tolerance on function values. 
Given a tolerance $\varepsilon > 0$, we say that a point $x$ is an \emph{$\varepsilon$-approximate minimizer} of a function $f\colon\R^d\to\overline\R$ whenever the inequality holds: 
\begin{align*}
	f(x) \le \inf \, f + \varepsilon.
\end{align*}
Consider now a sequence of tolerances $\varepsilon_k\geq 0$ for $k=1,2\ldots,\infty$. Then given a current iterate $x_k$, an \emph{inexact prox-linear algorithm} for minimizing $F$ can simply declare $x_{k+1}$ to be an $\varepsilon_{k+1}$-approximate minimizer of $F_t(\cdot;x_{k})$. We record this scheme in Algorithm~\ref{alg: inex_prox_lin}.

{\LinesNotNumbered
	\begin{algorithm}[h!]
		\SetKw{Null}{NULL}
		\SetKw{Return}{return}
		\Initialize{A point $x_0 \in \text{dom} \,
			g$, a real $t>0$, and a sequence $\{\varepsilon_i \}^{\infty}_{i=1}\subset [0,+\infty)$.}
		{\bf Step k:} ($k\geq 0)$
		Set $x_{k+1}$ to be an $\varepsilon_{k+1}\textrm{-approximate minimizer of } F_{t}(\cdot;x_{k})$. \\
		\caption{Inexact prox-linear method: near-optimality}
		\label{alg: inex_prox_lin}
\end{algorithm}}

Before stating convergence guarantees of the method, we record the following observation stating that the step-size of the inexact prox-linear method $\|x_{k+1}-x_k\|$ and the accuracy $\varepsilon_k$ jointly control the size of the true prox-gradient $\|\mathcal{G}_t(x_k)\|$. As a consequence, the step-sizes $\|x_{k+1}-x_k\|$ generated throughout the algorithm can be used as surrogates for the true stationarity measure $\|\mathcal{G}_t(x_k)\|$.  

\begin{lemma} Suppose $x^+$ is an $\varepsilon$-approximate minimizer of $F_t(\cdot; x)$. Then the inequality holds:
	\[ \norm{\mathcal{G}_t(x)}^2 \le 4t^{-1}
	\varepsilon + 2 \norm{t^{-1}(x^+-x)}^2.\]
	\label{lem: funct_grad}
\end{lemma}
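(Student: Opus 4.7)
The plan is to exploit two ingredients: strong convexity of the proximal subproblem and the triangle inequality, together with the elementary bound $\|a+b\|^2\le 2\|a\|^2+2\|b\|^2$.

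First, I would observe that the objective $F_t(\,\cdot\,;x)$ is $t^{-1}$-strongly convex, since it is the sum of the convex function $z\mapsto g(z)+h(c(x)+\nabla c(x)(z-x))$ (a convex function composed with an affine map, added to a convex function) and the $t^{-1}$-strongly convex quadratic $\tfrac{1}{2t}\|z-x\|^2$. Letting $\bar{x}:=S_t(x)$ denote the exact minimizer, strong convexity immediately gives
\[
\tfrac{1}{2t}\|x^+-\bar{x}\|^2 \;\le\; F_t(x^+;x)-F_t(\bar{x};x)\;\le\;\varepsilon,
\]
so $\|t^{-1}(x^+-\bar{x})\|^2\le 2t^{-1}\varepsilon$.

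Next, I would write
\[
\mathcal{G}_t(x)\;=\;t^{-1}(x-\bar{x})\;=\;t^{-1}(x-x^+)+t^{-1}(x^+-\bar{x}),
\]
and apply $\|a+b\|^2\le 2\|a\|^2+2\|b\|^2$ to obtain
\[
\|\mathcal{G}_t(x)\|^2\;\le\;2\|t^{-1}(x^+-x)\|^2+2\|t^{-1}(x^+-\bar{x})\|^2\;\le\;2\|t^{-1}(x^+-x)\|^2+4t^{-1}\varepsilon,
\]
which is the claimed inequality.

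There is no real obstacle here; the only thing to be careful about is correctly identifying the strong convexity modulus of $F_t(\,\cdot\,;x)$ and tracking the factor of $t$ when passing from $\|x^+-\bar{x}\|^2\le 2t\varepsilon$ to $\|t^{-1}(x^+-\bar{x})\|^2\le 2t^{-1}\varepsilon$. The result then drops out in two short lines.
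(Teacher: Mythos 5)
Your proof is correct and is essentially identical to the paper's argument: both decompose $\mathcal{G}_t(x)=t^{-1}(x-x^+)+t^{-1}(x^+-\bar x)$, bound the second term via $t^{-1}$-strong convexity of $F_t(\cdot;x)$ at its minimizer, and combine with $\|a+b\|^2\le 2\|a\|^2+2\|b\|^2$. The constants are tracked correctly, so nothing further is needed.
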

\begin{proof} 
	Let $z^*$ be the true minimizer of $F_t(\cdot; x)$. We successively deduce
	\begin{align}
		\norm{\mathcal{G}_t(x)}^2 &\le \frac{4}{t} \cdot \frac{1}{2t}
		\norm{x^+-z^*}^2 + 2 \norm{t^{-1}(x^+-x)}^2 \nonumber \\
		&\le \frac{4}{t} \cdot \big ( F_t(x^+; x) - F_t(z^*;x)
		\big ) + 2 \norm{t^{-1}(x^+-x)}^2 \\
		&\le \frac{4}{t} \cdot \varepsilon + 2 \norm{t^{-1}(x^+-x)}^2 \nonumber,
	\end{align}
	where the first inequality follows from the triangle inequality and the estimate $(a+b)^2\leq 2(a^2+b^2)$ for any reals $a,b$, and the second inequality is an immediate consequence of strong convexity of the function
	$F_t(\cdot;x)$.
\end{proof}

The inexact prox-linear algorithm comes equipped with the following guarantee. 
\begin{theorem}[Convergence of the inexact prox-linear algorithm: near-optimality] {\hfill \\ }Supposing $t\leq \mu^{-1}$, the iterates generated by  Algorithm~\ref{alg: inex_prox_lin} satisfy 
	\[ \min_{j=0, \hdots, N-1}  \norm{\mathcal{G}_t(x_j)}^2  \le
	\frac{2t^{-1} \big ( F(x_0)-F^* +  \sum_{j=1}^{N} \varepsilon_{j} \big
		)}{N},\]
	where we set $\displaystyle F^*:=\lf_{k\to\infty} F(x_k)$.
	\label{thm: funct_pla}
\end{theorem}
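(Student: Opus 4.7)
The plan is to mimic the proof of Proposition~\ref{prop:basic_prox_lin}, while tracking the slack $\varepsilon_{k+1}$ introduced by solving the proximal subproblem only approximately. The whole argument reduces to establishing a perturbed one-step descent inequality
\[
F(x_{k+1}) \le F(x_k) - \tfrac{t}{2}\|\mathcal{G}_t(x_k)\|^2 + \varepsilon_{k+1},
\]
and then telescoping over $k=0,\ldots,N-1$.

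To derive this descent, set $\tilde{x}_{k+1}:=S_t(x_k)$, so that $\mathcal{G}_t(x_k)=t^{-1}(x_k-\tilde{x}_{k+1})$, and combine three standard ingredients: (i) $t^{-1}$-strong convexity of $F_t(\cdot;x_k)$ applied between $x_k$ and the exact minimizer $\tilde{x}_{k+1}$, together with the identity $F_t(x_k;x_k)=F(x_k)$, yields $F_t(\tilde{x}_{k+1};x_k)\le F(x_k)-\tfrac{t}{2}\|\mathcal{G}_t(x_k)\|^2$; (ii) $\varepsilon_{k+1}$-approximate minimality of $x_{k+1}$ gives $F_t(x_{k+1};x_k)\le F_t(\tilde{x}_{k+1};x_k)+\varepsilon_{k+1}$; (iii) Lemma~\ref{lem:upper_lower} together with the hypothesis $t\le\mu^{-1}$ yields the majorization $F(x_{k+1})\le F_t(x_{k+1};x_k)$. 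Chaining (i)--(iii) produces the displayed descent.

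Summing from $k=0$ to $N-1$ telescopes the $F$-terms and leaves
\[
\tfrac{t}{2}\sum_{k=0}^{N-1}\|\mathcal{G}_t(x_k)\|^2 \le F(x_0)-F(x_N)+\sum_{k=1}^{N}\varepsilon_k.
\]
Bounding the left-hand side below by $\tfrac{tN}{2}\min_{j=0,\ldots,N-1}\|\mathcal{G}_t(x_j)\|^2$, invoking $F(x_N)\ge F^{*}$ on the right, and dividing by $tN/2$ yields the claim.

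I expect the only real subtlety to be justifying the inequality $F(x_N)\ge F^{*}$. In the exact Proposition~\ref{prop:basic_prox_lin} this was immediate from monotone descent, but here monotonicity is spoiled by the errors $\varepsilon_k$. The remedy is to iterate the perturbed descent from index $N$ forward: it gives $F(x_{N'})\le F(x_N)+\sum_{k=N+1}^{N'}\varepsilon_k$ for all $N'>N$, and passing to the $\liminf$ in $N'$ recovers the required inequality (modulo a vanishing tail of errors that is naturally absorbed into the $\liminf$ defining $F^{*}$). This is the only genuinely formal, rather than substantive, step in the argument.
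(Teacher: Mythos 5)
Your proof is correct and follows exactly the same route as the paper: the identical one-step perturbed descent inequality $F(x_{k+1}) \le F(x_k) - \tfrac{t}{2}\|\mathcal{G}_t(x_k)\|^2 + \varepsilon_{k+1}$, obtained by chaining strong convexity of $F_t(\cdot;x_k)$ at the exact minimizer, $\varepsilon_{k+1}$-approximate minimality, and the upper-model property from Lemma~\ref{lem:upper_lower}, followed by the same telescoping and averaging. The only difference is your closing discussion of why $F(x_N)\ge F^*$, a step the paper's proof passes over silently; your concern is legitimate, but note that your tail argument actually yields $F(x_N)\ge F^* - \sum_{k>N}\varepsilon_k$ rather than $F(x_N)\ge F^*$, so strictly speaking it produces the bound with $\sum_{j=1}^{\infty}\varepsilon_j$ in place of $\sum_{j=1}^{N}\varepsilon_j$ --- a (harmless, since the errors are taken summable in applications) imprecision shared with, not created relative to, the published argument.
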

\begin{proof}
	Let $x_{k}^*$ be the exact minimizer of 
	$F_t(\cdot;x_k)$. Note then the equality $\mathcal{G}_t(x_k) = t^{-1}(x_{k}^*-x_k)$. Taking into account that  $F_t(\cdot;x_k)$ is strongly convex with modulus $1/t$, we deduce
	\begin{align*}
		F(x_k) = F_t(x_k; x_k) &\ge  F_t(x_{k}^*; x_k) + \tfrac{t}{2}
		\norm{\mathcal{G}_t(x_k)}^2 \ge  F_t(x_{k+1}; x_k) - \varepsilon_{k+1} + \tfrac{t}{2}
		\norm{\mathcal{G}_t(x_k)}^2.
	\end{align*}
	Then the inequality $t\leq \mu^{-1}$ along with \eqref{eqn:ineq_min_max} implies that $F_t(\cdot;x_k)$ is an upper model of $F(\cdot)$ and therefore  
	\begin{equation} F(x_k) \ge F(x_{k+1}) - \varepsilon_{k+1} + \tfrac{t}{2}
		\norm{\mathcal{G}_t(x_k)}^2. \label{eq:unbound_fucnt_error_non_cvx}\end{equation}
	We conclude 
	\begin{align*}
		\min_{j=0, \hdots, N-1} \norm{\mathcal{G}_t(x_j)}^2  \le \frac{1}{N}
		\sum_{j=0}^{N-1}
		\norm{\mathcal{G}_t(x_j)}^2
		&\le
		\frac{2t^{-1} \left ( \sum_{j=0}^{N-1} F(x_j)-F(x_{j+1}) + \sum_{j=0}^{N-1}
			\varepsilon_{j+1} \right )}{N} \\
		&\le \frac{2t^{-1} \big ( F(x_0)-F^* + \sum_{j=0}^{N-1} \varepsilon_{j+1} \big )}{N}.
	\end{align*}
	The proof is complete.
\end{proof}

Thus in order to maintain the rate afforded by the exact prox-linear method, it suffices for the errors $\{\varepsilon_k\}^{\infty}_{k=1}$ to be summable; e.g. set $\varepsilon_k\sim \frac{1}{k^{1+q}}$ with $q>0$.

\subsection{Near-stationarity in the subproblems}\label{sec:near-stat}
In the previous section, we considered the effect of solving the proximal subproblems up to an accuracy in functional error. We now consider instead a model of inexactness for the proximal subproblems based on near-stationarity. 
A first naive attempt would be to consider a point $z$ to be $\varepsilon$-stationary for the proximal subproblem, $\min F_t(\cdot;x)$, if it satisfies
$$\dist(0;\partial_z F_t(z;x))\leq \varepsilon.$$
This assumption, however, is not reasonable since first-order methods for this problem do not produce such points $z$, unless $h$ is smooth. 
Instead, let us look at the Fenchel dual problem. To simplify notation, write the target  subproblem $\min F_t(\cdot;x)$ as 
\begin{equation}\label{eqn:target_rewritten}
	\min_z ~ h(b-Az) + G(z) 
\end{equation}
under the identification $G(z)=g(z)+\frac{1}{2t}\|z-x\|^2$, $A=-\nabla c(x)$, and $b=c(x)-\nabla c(x)x$.  Notice that $G$ is $t^{-1}$-strongly convex and therefore $G^\star$ is $C^1$-smooth with $t$-Lipschitz gradient. The Fenchel dual problem, after negation, takes the form \cite[Example 11.41]{RW98}:
\begin{equation}\label{eqn:fench_dual}
	\min_{w}~ \varphi(w):=G^{\star}(A^* w) - \ip{b, w}+h^{\star}(w) .
\end{equation}
Thus the dual objective function $\varphi$ is a sum of a smooth convex function $G^{\star}(A^* w) - \ip{b, w}$ and the simple nonsmooth convex term $h^{\star}$. 
Later on, when $x$ depends on an iteration counter $k$, we will use the notation $\varphi_k$, $G_k$, $A_k$, $b_k$ instead to make precise that these objects depend on $k$.

Typical first-order methods, such as prox-gradient and its accelerated variants can generate a point $w$ for the problem \eqref{eqn:fench_dual}  satisfying 
\begin{equation}\label{eqn:dual_nearstat}
	\dist(0;\partial \varphi(w))\leq \varepsilon
\end{equation}
up to any specified tolerance $\varepsilon>0$. Such schemes in each iteration only require evaluation of the gradient of the smooth function $G^{\star}(A^* w) - \ip{b, w}$ along with knowledge of a Lipschitz constant of the gradient, and evaluation of the proximal map of $h^{\star}$. For ease of reference, we record  these quantities here in terms of the original functional components of the composite problem \eqref{eqn:comp2}. Since the proof is standard, we have placed it in Appendix~\ref{sec:proofs_append}.

\begin{lemma}\label{lem:expl_grad_comp}
	The following are true for all points $z$ and $w$ and real $t>0$:
	\begin{itemize}	
		\item The equation holds:
		\begin{equation}\label{dual_gradient_formula}
			\prox_{th^{\star}}(w)=t\left(w/t-\prox_{h/t}(w/t)\right).
		\end{equation}
		
		\item 
		The equations hold:
		\begin{equation}\label{eqn:dual_subdiff}
			G^{\star}(z)=(g^{\star})_{1/t}(z+x/t)-\tfrac{1}{2t}\|x\|^2\qquad \textrm{and}\qquad \nabla G^{\star}(z)=\prox_{tg}(x+tz).
		\end{equation}
		Consequently, the gradient map $\nabla \Big(G^{\star}\circ A^* - \ip{\cdot,b}\Big)$ is Lipschitz continuous with constant $t\|\nabla c(x)\|^2_{\textrm{op}}$ and admits the representation:
		\begin{equation}\label{eqn:grad_express}
			\nabla \Big(G^{\star}\circ A^* - \ip{b,\cdot}\Big)(w)=\nabla c(x)\Big(x+\prox_{tg}(x-t\nabla c(x)^*w)\Big)-c(x).
		\end{equation}
	\end{itemize}
\end{lemma}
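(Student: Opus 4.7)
The lemma consists of three explicit calculations, all of which reduce to standard conjugate calculus. I would address them in the order they are listed, relying on Lemma~\ref{lem:lip_cont} (gradient of the Moreau envelope), the Moreau decomposition, and the duality between strong convexity and Lipschitz-gradient smoothness.

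First, the identity $\prox_{th^{\star}}(w)=t(w/t-\prox_{h/t}(w/t))$ is simply the Moreau decomposition for the closed convex function $h$, namely $\prox_{t h^{\star}}(w)+t\prox_{h/t}(w/t)=w$ (see \cite[Theorem 31.5]{rock}), rearranged. Nothing else is needed.

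Second, for the formulas in \eqref{eqn:dual_subdiff}, note that the Moreau envelope $(g^{\star})_{1/t}$ coincides with the infimal convolution $g^{\star}\,\square\,\tfrac{t}{2}\|\cdot\|^2$, so its Fenchel conjugate is $g+\tfrac{1}{2t}\|\cdot\|^2$. To prove $G^{\star}(z)=(g^{\star})_{1/t}(z+x/t)-\tfrac{1}{2t}\|x\|^2$, I would simply compute the conjugate of the right-hand side (as a function of $z$), absorb the constant and the shift by $x/t$ into a linear term, and check that it equals $G=g+\tfrac{1}{2t}\|\cdot-x\|^2$; since both sides are closed and convex, biconjugation yields the equality. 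For the gradient $\nabla G^{\star}(z)=\prox_{tg}(x+tz)$, the cleanest route is to differentiate the right-hand side directly: Lemma~\ref{lem:lip_cont} gives $\nabla (g^{\star})_{1/t}(y)=t(y-\prox_{g^{\star}/t}(y))$, and one application of the Moreau decomposition converts $\prox_{g^{\star}/t}(y)$ into $y-\tfrac{1}{t}\prox_{tg}(ty)$, so after chain-ruling through the shift $y=z+x/t$ everything collapses to $\prox_{tg}(x+tz)$. Alternatively, the same formula drops out of the first-order optimality conditions for the strongly convex problem defining $G^{\star}(z)$.

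Third, the Lipschitz constant and the formula \eqref{eqn:grad_express} are obtained by the chain rule applied to the linear map $A^*$. Since $G$ is $1/t$-strongly convex, $G^{\star}$ has $t$-Lipschitz gradient; composing with $A^*$ multiplies this by $\|A\|_{\mathrm{op}}^2=\|\nabla c(x)\|_{\mathrm{op}}^2$, as claimed. For the explicit form, the chain rule gives
\[
\nabla\Bigl(G^{\star}\circ A^{*}-\langle b,\cdot\rangle\Bigr)(w)=A\,\nabla G^{\star}(A^{*}w)-b,
\]
and substituting $A=-\nabla c(x)$, $b=c(x)-\nabla c(x)x$, and $\nabla G^{\star}(y)=\prox_{tg}(x+ty)$ produces the stated expression after straightforward algebra.

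None of these steps presents a conceptual obstacle; the only mild bookkeeping concern is tracking the scalar $t$ in the Moreau decomposition and the signs introduced by the substitution $A=-\nabla c(x)$.
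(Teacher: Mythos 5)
Your proposal is correct and follows essentially the same route as the paper's proof: the scaled Moreau decomposition for \eqref{dual_gradient_formula}, conjugate calculus for the formula for $G^{\star}$, the envelope gradient identity from Lemma~\ref{lem:lip_cont} combined with another Moreau decomposition for $\nabla G^{\star}$, and the chain rule through the linear map $A^*$ for the Lipschitz constant and \eqref{eqn:grad_express}. The only cosmetic differences are that you conjugate the right-hand side and invoke biconjugation where the paper computes $(g+\tfrac{1}{2t}\|\cdot-x\|^2)^{\star}$ directly via the infimal-convolution theorem, and that you cite the scaled Moreau decomposition outright where the paper first derives the scaling identity $\prox_{(tf)^{\star}}(w)=t\prox_{f^{\star}/t}(w/t)$ as a separate step.
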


Thus, suppose we have found
a point $w$ satisfying \eqref{eqn:dual_nearstat}. How can we then generate a primal iterate $x^+$ at which to form the prox-linear subproblem for the next step? The following lemma provides a simple recipe for doing exactly that. It shows how to generate from $w$ a point that is a true minimizer to a slight perturbation of the proximal subproblem.

\begin{lemma}[Primal recovery from dual $\varepsilon$-stationarity]\label{lem:prim_revorery}
	Let $\varphi$ be the function defined in \eqref{eqn:fench_dual}. Fix a point $w\in \dom \varphi$ and a vector $\zeta\in \partial \varphi(w)$. Then the point $\bar x:=\nabla G^{\star}(A^*w)$ is the true minimizer of the problem
	\begin{equation}\label{eqn:perturbed_prob_stat}
		\min_z~ h( \zeta + b - Az ) + G(z).
	\end{equation}
\end{lemma}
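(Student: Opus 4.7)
The proof will amount to verifying the first-order optimality conditions for the strongly convex problem \eqref{eqn:perturbed_prob_stat} at the candidate point $\bar x = \nabla G^{\star}(A^*w)$, using the subgradient inversion formula \eqref{eqn:reverse_subgrad_conj} twice. Since $G$ is $t^{-1}$-strongly convex and $h$ is convex, the objective in \eqref{eqn:perturbed_prob_stat} is strongly convex, so any stationary point is automatically the unique global minimizer; hence it suffices to exhibit a subgradient certificate.

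\textbf{Step 1: unpack the dual subgradient.} By the sum and chain rules (applicable because $G^\star$ is smooth),
\[
\partial \varphi(w) \;=\; A\,\nabla G^{\star}(A^*w) - b + \partial h^{\star}(w) \;=\; A\bar x - b + \partial h^{\star}(w).
\]
Thus the assumption $\zeta \in \partial \varphi(w)$ rearranges to
\[
\zeta + b - A\bar x \;\in\; \partial h^{\star}(w).
\]

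\textbf{Step 2: invert through conjugacy.} Using \eqref{eqn:reverse_subgrad_conj} applied to $h$, the above inclusion is equivalent to
\[
w \;\in\; \partial h\bigl(\zeta + b - A\bar x\bigr).
\]
Similarly, the definition $\bar x = \nabla G^{\star}(A^*w)$ combined with \eqref{eqn:reverse_subgrad_conj} applied to $G$ yields
\[
A^*w \;\in\; \partial G(\bar x).
\]

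\textbf{Step 3: assemble the optimality condition.} Writing $\Phi(z) := h(\zeta + b - Az) + G(z)$, the convex subdifferential sum/chain rule gives
\[
\partial \Phi(\bar x) \;=\; -A^*\,\partial h(\zeta + b - A\bar x) + \partial G(\bar x).
\]
Combining Steps 1 and 2 we have $-A^*w + A^*w = 0 \in \partial \Phi(\bar x)$. Strong convexity of $\Phi$ then forces $\bar x$ to be the unique minimizer of \eqref{eqn:perturbed_prob_stat}.

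\textbf{Main obstacle.} There is essentially no analytical difficulty here; the whole argument is a textbook Fenchel-duality manipulation. The only place where one must be slightly careful is to make sure that the subdifferential sum/chain rules apply without qualification conditions: this is fine because $G^\star$ is finite-valued and $C^1$-smooth (so the chain rule in Step 1 is automatic) and the outer function in $\Phi$ is just $h$ composed with an affine map with $h$ finite-valued on all of $\R^m$ (so no constraint qualification is needed in Step 3). After these observations the proof is a three-line verification.
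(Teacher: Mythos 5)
Your proof is correct and follows exactly the same route as the paper's: unpack $\partial\varphi(w)$ via the chain rule, invert the resulting inclusion through \eqref{eqn:reverse_subgrad_conj} applied to $h$ and to $G$, and verify the first-order optimality condition $0\in\partial\Phi(\bar x)$ for the perturbed problem. Your explicit remark that strong convexity of $G$ guarantees uniqueness of the minimizer is a small but welcome clarification the paper leaves implicit.
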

\begin{proof}
	Appealing to the chain rule,
	$\partial \varphi(w)=A\nabla G^{\star} (A^*w)-b+\partial h^{\star}(w)$, we deduce  $$\zeta+b\in A\nabla G^{\star} (A^*w)+\partial h^{\star}(w)=A\bar x+\partial h^{\star}(w).$$	The relation \eqref{eqn:reverse_subgrad_conj} then implies  $w\in \partial h(\zeta+b-A\bar x)$. Applying $A^*$ to both sides and rearranging yields
	$$0\in -A^*\partial h(\zeta+b-A\bar x)+A^{*}w\subseteq -A^*\partial h(\zeta+b-A \bar x)+\partial G(\bar x),$$
	where the last inclusion follows from applying \eqref{eqn:reverse_subgrad_conj}  to $G$.
	The right-hand-side is exactly the subdifferential of the objective function in \eqref{eqn:perturbed_prob_stat} evaluated at $\bar x$. The result follows.
\end{proof}

This lemma directly motivates the following inexact extension of the prox-linear algorithm (Algorithm~\ref{alg: inex_prox_lin_subgrad}), based on dual near-stationary points.

{\LinesNotNumbered
	\begin{algorithm}[h!]
		\SetKw{Null}{NULL}
		\SetKw{Return}{return}
		\Initialize{A point $x_0 \in \text{dom} \,
			g$, a real $t>0$, and a sequence $\{\varepsilon_i\}^{\infty}_{i=1}\subset [0,+\infty)$.}
		{\bf Step k:} ($k\geq 0)$
		Find $(x_{k+1},\zeta_{k+1})$ such that $\|\zeta_{k+1}\|\leq \varepsilon_{k+1}$ and 
		$x_{k+1}$ is the minimizer of the function 
		\begin{equation}\label{eqn:perturbed_func}
			z\mapsto g(z)+h\Big(\zeta_{k+1}+c(x_k)+\nabla c(x_k)(z -x_k)\Big)+\frac{1}{2t}\|z-x_k\|^2.
		\end{equation}
		\caption{Inexact prox-linear method:  near-stationarity}
		\label{alg: inex_prox_lin_subgrad}
\end{algorithm}}

Algorithm~\ref{alg: inex_prox_lin_subgrad} is stated in a way most useful for convergence analysis. On the other hand, it is not very explicit. To crystallize the ideas, let us concretely describe how one can implement step $k$ of the scheme. First, we find a point $w_{k+1}$ that is $\varepsilon_{k+1}$-stationary for the dual problem \eqref{eqn:fench_dual}. More precisely, we  find a pair $(w_{k+1},\zeta_{k+1})$ satisfying $\zeta_{k+1}\in \partial \varphi_{k}(w_{k+1})$ and $\|\zeta_{k+1}\|\leq \varepsilon_{k+1}$. We can achieve this by a proximal gradient method (or its accelerated variants) on the dual problem \eqref{eqn:fench_dual}. Then combining Lemma~\ref{lem:prim_revorery} with equation \eqref{eqn:dual_subdiff}, we conclude that we can simply set 
$$x_{k+1}:=\nabla G^{\star}(A^*w_{k+1})=\prox_{tg}(x_k-t\nabla c(x_k)^*w_{k+1}).$$
We record this more explicit description of Algorithm~\ref{alg: inex_prox_lin_subgrad}  in Algorithm~\ref{alg: inex_prox_lin_subgrad_exp}. The reader should keep in mind that even though Algorithm~\ref{alg: inex_prox_lin_subgrad_exp} is more explicit, the convergence analysis we present will use the description in Algorithm~\ref{alg: inex_prox_lin_subgrad}.

{\LinesNotNumbered
	\begin{algorithm}[h!]
		\SetKw{Null}{NULL}
		\SetKw{Return}{return}
		\Initialize{A point $x_0 \in \text{dom} \,
			g$, a real $t>0$, and a sequence $\{\varepsilon_i\}^{\infty}_{i=1}\subset [0,+\infty)$.}
		{\bf Step k:} ($k\geq 0)$
		Define the function

		$$\varphi_k(w):=(g^{\star})_{1/t}\Big(x_k/t-\nabla c(x_k)^* w\Big) - \big\langle c(x_k)-\nabla c(x_k)x_k, w\big\rangle+h^{\star}(w).$$
		
		
		Find a point $w_{k+1}$ satisfying $\dist(0;\partial \varphi_k(w_{k+1}))\leq \varepsilon_{k+1}$.\\
		Set $x_{k+1}=\prox_{tg}(x_k-t\nabla c(x_k)^*w_{k+1})$.
		\caption{Inexact prox-linear method:  near-stationarity (explicit)}
		\label{alg: inex_prox_lin_subgrad_exp}
\end{algorithm}}

Before stating convergence guarantees of the method, we record the following observation stating that the step-size $\|x_{k+1}-x_k\|$ and the error $\varepsilon_{k+1}$ jointly control  the stationarity measure $\|\mathcal{G}_t(x_k)\|$. In other words, one can use the step-size $\|x_{k+1}-x_k\|$, generated throughout the algorithm, as a surrogate for the true 
stationarity measure $\|\mathcal{G}_t(x_k)\|$.

\begin{lemma}\label{lem:pass_tostat} Suppose $x^+$ is a minimizer of the function 
	\begin{equation*}
		z\mapsto g(z)+h\Big(\zeta+c(x)+\nabla c(x)(z -x)\Big)+\frac{1}{2t}\|z-x\|^2
	\end{equation*}
	for some vector $\zeta$.
	Then for any real $t > 0$, the inequality holds:
	\begin{equation}\label{eqn:keyinlema-eqn}
		\norm{\mathcal{G}_t(x)}^2  \le
		8 Lt^{-1} \cdot \|\zeta\| + 2 \norm{t^{-1}(x^+-x)}^2.
	\end{equation}
	\label{lem: dual_grad}
\end{lemma}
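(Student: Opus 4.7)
The plan is to mimic the structure of the proof of Lemma~\ref{lem: funct_grad}, with the extra twist that $x^+$ is now the exact minimizer of a \emph{perturbed} proximal subproblem rather than an approximate minimizer of the original one. Let $z^*$ denote the true minimizer of $F_t(\cdot;x)$, so that $\mathcal{G}_t(x) = t^{-1}(x-z^*)$. Writing $\mathcal{G}_t(x) = t^{-1}(x-x^+) + t^{-1}(x^+-z^*)$ and applying the inequality $(a+b)^2 \le 2a^2+2b^2$ gives
\[
\norm{\mathcal{G}_t(x)}^2 \;\le\; \tfrac{4}{t}\cdot \tfrac{1}{2t}\norm{x^+-z^*}^2 + 2\norm{t^{-1}(x^+-x)}^2.
\]
By $t^{-1}$-strong convexity of $F_t(\cdot;x)$, the first summand on the right is controlled by $\frac{4}{t}\bigl(F_t(x^+;x)-F_t(z^*;x)\bigr)$. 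So it suffices to show $F_t(x^+;x)-F_t(z^*;x) \le 2L\|\zeta\|$.

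The key observation is that the perturbed objective in \eqref{eqn:perturbed_func}, call it $F_t^{\zeta}(\cdot;x)$, differs from $F_t(\cdot;x)$ only in that $h$ is evaluated at an argument shifted by $\zeta$. Since $h$ is $L$-Lipschitz, for every point $z$ we have $|F_t^{\zeta}(z;x)-F_t(z;x)| \le L\|\zeta\|$. Using this at both $x^+$ and $z^*$, together with optimality of $x^+$ for $F_t^{\zeta}(\cdot;x)$ (so $F_t^{\zeta}(x^+;x)\le F_t^{\zeta}(z^*;x)$), gives
\[
F_t(x^+;x) - F_t(z^*;x) \;\le\; \bigl(F_t^{\zeta}(x^+;x)+L\|\zeta\|\bigr) - \bigl(F_t^{\zeta}(z^*;x)-L\|\zeta\|\bigr) \;\le\; 2L\|\zeta\|.
\]
Plugging this bound into the displayed inequality above yields exactly \eqref{eqn:keyinlema-eqn}.

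There is no real obstacle here; the main subtlety is simply recognizing that the right way to compare $x^+$ and $z^*$ is through strong convexity of the \emph{unperturbed} subproblem at its exact minimizer $z^*$, while transferring optimality of $x^+$ from the perturbed to the unperturbed problem via the uniform $L\|\zeta\|$ gap. This two-sided use of the Lipschitz bound on $h$ is what produces the factor of $2$ (and ultimately the $8$) in front of $Lt^{-1}\|\zeta\|$.
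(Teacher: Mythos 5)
Your proof is correct and follows essentially the same route as the paper's: the same decomposition of $\mathcal{G}_t(x)$ through $z^*$, the same use of $t^{-1}$-strong convexity of $F_t(\cdot;x)$, and the same two-sided $L\|\zeta\|$ comparison between the perturbed and unperturbed objectives (your $F_t^{\zeta}(\cdot;x)$ is the paper's auxiliary function $l$). No substantive difference.
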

\begin{proof} 
	Define the function $$l(z)=g(z)+h\Big(\zeta+c(x)+\nabla c(x)(z -x)\Big)+\frac{1}{2t}\|z-x\|^2.$$
	Let $z^*$ be the true minimizer of $F_t(\cdot;x)$.
	We successively deduce
	\begin{align}
		\norm{\mathcal{G}_t(x)}^2 &\le \frac{4}{t} \cdot \frac{1}{2t}
		\norm{x^+-z^*}^2 + 2 \norm{t^{-1}(x^+-x)}^2 \nonumber \\
		&\le \frac{4}{t} \cdot \big ( F_t(x^+; x) - F_t(z^*;x)
		\big ) + 2 \norm{t^{-1}(x^+-x)}^2\\
		&\leq  \frac{4}{t}(l(x^+)-l(z^*)+2L\|\zeta\|) + 2 \norm{t^{-1}(x^+-x)}^2  \nonumber\\
		&\leq  8t^{-1}L\|\zeta\| + 2 \norm{t^{-1}(x^+-x)}^2 , \nonumber
	\end{align}
	where the first inequality follows from the triangle inequality and the estimate $(a+b)^2\leq 2(a^2+b^2)$ for any reals $a,b$, the second inequality is an immediate consequence of strong convexity of the function
	$F_t(\cdot;x)$, and the third follows from Lipschitz continuity of $h$.
	%
\end{proof}

Theorem~\ref{thm: dual_pla} explains the convergence guarantees of the method; c.f. Proposition~\ref{prop:basic_prox_lin}. 

\begin{theorem}[Convergence of the inexact prox-linear method: near-stationarity] Supposing $t\leq \mu^{-1}$, the iterates generated by  Algorithm~\ref{alg: inex_prox_lin_subgrad} satisfy  
	\[ \min_{j =0, \hdots, N-1} \,  \norm{\mathcal{G}_t(x_j)}^2  \le
	\frac{4 t^{-1} \big ( F(x_0)-F^* + 4L \cdot \sum_{j=1}^{N}
		\varepsilon_{j} \big )}{N},\]
	where we set $\displaystyle F^*:=\lf_{k\to\infty} F(x_k)$.
	\label{thm: dual_pla}
\end{theorem}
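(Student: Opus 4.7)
The plan is to mimic the proof of Theorem~\ref{thm: funct_pla}, but replace the near-optimality bound on $F_t(x_{k+1};x_k)$ with one that measures the perturbation in the proximal subproblem introduced by $\zeta_{k+1}$. Concretely, I would introduce the perturbed objective
\[ l_k(z) := g(z)+h\bigl(\zeta_{k+1}+c(x_k)+\nabla c(x_k)(z-x_k)\bigr)+\tfrac{1}{2t}\|z-x_k\|^2, \]
of which $x_{k+1}$ is the \emph{exact} minimizer. Since $l_k$ is $t^{-1}$-strongly convex, the optimality of $x_{k+1}$ gives the descent estimate
\[ l_k(x_k)-l_k(x_{k+1}) \ge \tfrac{1}{2t}\|x_{k+1}-x_k\|^2. \]

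The next step is to pass from $l_k$ back to $F$. Using $L$-Lipschitz continuity of $h$, I get $l_k(x_k)\le F(x_k)+L\|\zeta_{k+1}\|$ and $l_k(x_{k+1})\ge F_t(x_{k+1};x_k)-L\|\zeta_{k+1}\|$, and then the assumption $t\le\mu^{-1}$ together with Lemma~\ref{lem:upper_lower} yields $F_t(x_{k+1};x_k)\ge F(x_{k+1})$. Combining the three bounds produces the key per-step inequality
\[ \tfrac{1}{2t}\|x_{k+1}-x_k\|^2 \le F(x_k)-F(x_{k+1})+2L\|\zeta_{k+1}\|. \]

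Now Lemma~\ref{lem: dual_grad} converts the step-size $\|x_{k+1}-x_k\|$ together with $\|\zeta_{k+1}\|$ into control of the true prox-gradient:
\[ \|\mathcal{G}_t(x_k)\|^2 \le 8Lt^{-1}\|\zeta_{k+1}\|+\tfrac{4}{t}\cdot\tfrac{1}{2t}\|x_{k+1}-x_k\|^2. \]
Substituting the descent estimate above and simplifying gives
\[ \|\mathcal{G}_t(x_k)\|^2 \le \tfrac{4}{t}\bigl(F(x_k)-F(x_{k+1})\bigr)+\tfrac{16L}{t}\|\zeta_{k+1}\|. \]

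Finally, summing the last inequality for $k=0,\dots,N-1$ telescopes the $F$-differences into $F(x_0)-F(x_N)\le F(x_0)-F^*$, and bounding the minimum by the average yields exactly the claimed rate. The only mildly delicate point—and the step where the right constants appear—is keeping careful track of the two factors of $L\|\zeta_{k+1}\|$ in the sandwich between $F$ and $l_k$, which produces the $2L$ inside the descent inequality and, after passing through Lemma~\ref{lem: dual_grad}, the final $4L\sum\varepsilon_j$ in the bound; nothing else in the argument is subtle.
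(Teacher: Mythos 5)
Your proposal is correct and follows essentially the same route as the paper: the sandwich of $F$ between the perturbed objective $l_k$ evaluated at $x_k$ and $x_{k+1}$ is precisely the paper's derivation of the per-step inequality $F(x_{k+1})\le F(x_k)+2L\varepsilon_{k+1}-\tfrac{1}{2t}\|x_{k+1}-x_k\|^2$, and the conversion via Lemma~\ref{lem: dual_grad} followed by telescoping matches the paper's argument (the paper merely sums the step-sizes before invoking the lemma rather than after, which is the same computation). Your constant bookkeeping, $8L/t$ from the lemma plus $8L/t$ from the descent estimate giving $16L/t=\tfrac{4}{t}\cdot 4L$, reproduces the stated bound exactly.
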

\begin{proof}
	Observe the inequalities:
	\begin{align*}
		F(x_{k+1}) &\le F_{t}(x_{k+1};x_k)\\
		&\leq h \big (\zeta_{k+1} + c(x_k) +
		\nabla c(x_k) (x_{k+1}-x_k) \big ) + g(x_{k+1}) + \tfrac{1}{2t}
		\norm{x_{k+1}-x_k}^2 + L \cdot \varepsilon_{k+1}.
	\end{align*}
	Since the point $x_{k+1}$ minimizes the $\frac{1}{t}$-strongly convex function in
	\eqref{eqn:perturbed_func}, we deduce
	\begin{equation}
		\begin{aligned} 
			F(x_{k+1}) &\le h \big (\zeta_{k+1} + c(x_k) \big ) + g(x_k) + L \cdot \varepsilon_{k+1} - \tfrac{1}{2t}
			\norm{x_{k+1}-x_k}^2\\
			&\le F(x_k) + 2L \cdot \varepsilon_{k+1} - \tfrac{1}{2t} \norm{x_{k+1}-x_k}^2. 
		\end{aligned} \label{eq:something_unbounded}
	\end{equation}
	Summing along the indices $j = 0, \hdots, N-1$ yields
	$$\sum^{N-1}_{j=0} \|t^{-1}(x_{j+1}-x_j)\|^2\leq \frac{2}{t}\left(F(x_0)-F^*+2L\sum^{N-1}_{j=0} \varepsilon_{j+1}\right).$$
	Taking into account Lemma~\ref{lem:pass_tostat}, we deduce
	\begin{equation} \displaystyle\min_{j=0,1,\ldots,N-1}
		\|\mathcal{G}_t(x_j)\|^2\leq \frac{1}{N} \sum^{N-1}_{j=0}
		\|\mathcal{G}_t(x_j)\|^2\leq
		\frac{4t^{-1}(F(x_0)-F^*+4L\sum^{N}_{j=1}
			\varepsilon_{j})}{N}, \label{eq:something_unbounded_1} \end{equation}
	as claimed.
\end{proof}

In particular, to maintain the same rate in $N$ as the exact prox-linear method in Proposition~\ref{prop:basic_prox_lin}, we must be sure that the sequence $\varepsilon_k$ is summable. Hence, we can set $\varepsilon_k\sim \frac{1}{k^{1+q}}$ for any $q>0$.

\section{Overall complexity for the composite problem class}\label{sec:overall_comp}
In light of the results of Section~\ref{sec:inex_prox_lin}, we can now use the inexact prox-linear method to derive efficiency estimates for the composite problem class \eqref{eqn:comp2}, where the proximal subproblems are themselves solved by first-order methods. As is standard, we will assume that the functions $h$ and $g$ are {\em prox-friendly}, meaning that $\prox_{th}$ and $\prox_{tg}$ can be evaluated. Given a target accuracy $\varepsilon>0$, we aim to determine the number of {\em basic operations} --  evaluations of $c(x)$, matrix-vector multiplications $\nabla c(x)v$ and $\nabla c(x)^*w$, and evaluations of $\prox_{th}$, $\prox_{tg}$ -- needed to find a point $x$ satisfying $\|{\mathcal G}_t(x)\|\leq \varepsilon$. To simplify the exposition, we will ignore the cost of the evaluation $c(x)$, as it will typically be dominated by the cost of the matrix-vector products  $\nabla c(x)v$ and $\nabla c(x)^*w$.

To make progress, in this section we also assume that we have available a  real value, denoted $\|\nabla c\|$, satisfying
$$ \|\nabla c\|\geq\sup_{x\in \dom g} \|\nabla c(x)\|_{\textrm{op}}.$$
In particular, we assume that the right-hand-side is finite. Strictly speaking, we only need the inequality $\|\nabla c\|\geq \|\nabla c(x_k)\|_{\textrm{op}}$ to hold along an iterate sequence $x_k$ generated by the inexact prox-linear method. This assumption is completely expected: even when $c$ is a linear map, convergence rates of first-order methods for the composite problem \eqref{eqn:comp2} depend on some norm of the Jacobian $\nabla c$.

The strategy we propose can be succinctly summarized as follows: 
\begin{itemize}
	\item (Smoothing+prox-linear+fast-gradient) We will replace $h$ by a smooth approximation (Moreau envelope), with a careful choice of the smoothing parameter. Then we will apply an inexact prox-linear method to the smoothed problem, with the proximal subproblems approximately solved by fast-gradient methods.
\end{itemize}


The basis for the ensuing analysis  is the fast-gradient method of Nesterov \cite{nest_conv_comp} for minimizing convex additive composite problems.
The following section recalls the scheme and records its efficiency guarantees, for ease of reference.



\subsection{Interlude: fast gradient method for additive convex composite problems}\label{sec:interlude_fast}
This section discusses a scheme from \cite{nest_conv_comp} that can be applied to any problem of the form
\begin{equation}\label{eqn:conv_comp_gen_accel}
	\min_x~ f^p(x):=f(x)+p(x),
\end{equation}
where $f\colon\R^d \to\R$ is a convex $C^1$-smooth function with $L_f$-Lipschitz gradient and $p\colon\R^d \to\overline\R$ is a closed $\alpha$-strongly convex function ($\alpha\geq 0$). The setting $\alpha =0$ signifies that $p$ is just convex.

We record in Algorithm~\ref{alg:fast_grad_gen}  the so-called ``fast-gradient method'' for such problems \cite[Accelerated Method]{nest_conv_comp}.

{\LinesNotNumbered
	\begin{algorithm}[h!]
		\SetKw{Null}{NULL}
		\SetKw{Return}{return}
		\Initialize{Fix a point $x_0 \in \dom p$, set $\theta_0=0$, define the function $\psi_0(x):=\frac{1}{2}\|x-x_0\|^2$.}
		{\bf Step j:} ($j\geq 0)$ 
		Find $a_{j+1}>0$ from the equation 
		$$\tfrac{a^2_{j+1}}{\theta_j+a_{j+1}}=2\tfrac{1+\alpha \theta_j}{L_f}.$$
		Compute the following:
		\begin{align}
			\theta_{j+1}&=\theta_{j}+a_{j+1},\notag\\\	
			v_j&=\argmin_x ~\psi_j(x),	\label{eqn:v_fast_grad_gen}\\
			y_j&=\tfrac{\theta_jx_j+a_{j+1}v_j}{\theta_{j+1}},\notag\\
			x_{j+1}&=\argmin_{x}~ \{f(y_j)+\langle \nabla f(y_j),x-y_j\rangle+\tfrac{L_f}{2}\|x-y_j\|^2+p(x) \}\label{eqn:psi_fast_grad_gen}.
		\end{align}
		Define the function 
		\begin{equation}\label{eqn:fina_psi_def}
			\psi_{j+1}(x)=\psi_j(x)+a_{j+1}[f(x_{j+1})+\langle \nabla f(x_{j+1}),x-x_{j+1}\rangle+p(x)].
		\end{equation}


		%
		\caption{Fast gradient method of Nesterov \cite[Accelerated Method]{nest_conv_comp}}
		\label{alg:fast_grad_gen}
\end{algorithm}}

The method comes equipped with the following guarantee \cite[Theorem 6]{nest_conv_comp}.
\begin{theorem}\label{eff:accle_meth}
	Let $x^*$ be a minimizer of $f^p$ and suppose $\alpha>0$. Then the iterates $x_j$ generated by  Algorithm~\ref{alg:fast_grad_gen} satisfy:
	$$f^{p}(x_{j})-f^p(x^*)\leq   \left(1+\sqrt{\frac{\alpha}{2L_f}}\right)^{-2(j-1)}\frac{L_f}{4}\|x^*-x_0\|^2.$$ 
	
\end{theorem}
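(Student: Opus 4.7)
The plan is to follow Nesterov's estimating sequence methodology in the form tailored to the strongly convex setting. The sequence $\{\psi_j\}$ defined in the algorithm is explicitly a sum of a quadratic $\tfrac{1}{2}\|x-x_0\|^2$ with $a_i$-weighted linearizations of $f$ and copies of $p$; I would first record two structural facts about it: (i) $\psi_j$ is strongly convex with parameter $1+\alpha\theta_j$, since each added term $a_{i}\bigl[\text{linear}+p\bigr]$ contributes $a_i\alpha$ of strong convexity and $\sum_{i\le j}a_i=\theta_j$; and (ii) for every $x$ one has the upper bound $\psi_j(x)\le \theta_j f^p(x)+\tfrac12\|x-x_0\|^2$, which follows by induction using convexity of $f$ (so that the linearization underestimates $f$) together with the weight update $\theta_{j+1}=\theta_j+a_{j+1}$.

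The heart of the argument is then the invariant
\[
\psi_j^\star := \min_x \psi_j(x) \;\ge\; \theta_j\, f^p(x_j),
\]
proved by induction on $j$. In the inductive step I would expand $\psi_{j+1}$ around its predecessor's minimizer $v_j$ using the $(1+\alpha\theta_j)$-strong convexity from~(i), obtaining
\[
\psi_{j+1}(x)\;\ge\;\psi_j^\star+\tfrac{1+\alpha\theta_j}{2}\|x-v_j\|^2+a_{j+1}\!\left[f(x_{j+1})+\langle \nabla f(x_{j+1}),x-x_{j+1}\rangle+p(x)\right],
\]
plug in the inductive hypothesis, minimize the right-hand side in $x$, and then compare against $\theta_{j+1} f^p(x_{j+1})$. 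At this point the prox-gradient step~\eqref{eqn:psi_fast_grad_gen} enters via the standard descent lemma $f^p(x_{j+1})\le f(y_j)+\langle\nabla f(y_j),x_{j+1}-y_j\rangle+\tfrac{L_f}{2}\|x_{j+1}-y_j\|^2+p(x_{j+1})$, and the convex combination defining $y_j$ is chosen precisely so that the cross terms telescope. The algebraic identity that makes the induction close is exactly the defining relation $a_{j+1}^2/(\theta_j+a_{j+1})=2(1+\alpha\theta_j)/L_f$; this is the calculation I would expect to be the most delicate bookkeeping, since one must balance the strong-convexity penalty $\tfrac{1+\alpha\theta_j}{2}\|\cdot\|^2$ against the $L_f$-quadratic from the descent lemma.

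Combining the invariant with the upper bound on $\psi_j$ at $x=x^*$ gives
\[
\theta_j\,f^p(x_j)\;\le\;\psi_j^\star\;\le\;\psi_j(x^*)\;\le\;\theta_j f^p(x^*)+\tfrac12\|x^*-x_0\|^2,
\]
hence $f^p(x_j)-f^p(x^*)\le \tfrac{1}{2\theta_j}\|x^*-x_0\|^2$. It remains to lower-bound $\theta_j$. Here I would use the telescoping trick $\sqrt{\theta_{j+1}}-\sqrt{\theta_j}=\tfrac{a_{j+1}}{\sqrt{\theta_{j+1}}+\sqrt{\theta_j}}\ge \tfrac{a_{j+1}}{2\sqrt{\theta_{j+1}}}$, together with the estimate $a_{j+1}^2\ge \tfrac{2\alpha\theta_j\theta_{j+1}}{L_f}$ extracted from the defining recursion, to conclude $\sqrt{\theta_{j+1}}\ge \sqrt{\theta_j}\bigl(1+\sqrt{\alpha/(2L_f)}\bigr)$. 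Starting from $\theta_1=a_1=2/L_f$ (read off from the recursion at $j=0$) and iterating yields $\theta_j\ge \tfrac{2}{L_f}\bigl(1+\sqrt{\alpha/(2L_f)}\bigr)^{2(j-1)}$, which substituted into the functional bound delivers the stated inequality. The primary obstacle is the inductive step for the invariant $\psi_j^\star\ge \theta_j f^p(x_j)$; everything else is then a clean consequence of the algebra baked into the algorithm's parameter choices.
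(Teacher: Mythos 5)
The paper does not prove this theorem at all: it is imported verbatim as \cite[Theorem 6]{nest_conv_comp}, so there is no in-paper argument to compare against. Your proposal reconstructs precisely the estimating-sequences proof from that cited source, and the parts you carry out in full are correct: the strong-convexity modulus $1+\alpha\theta_j$ of $\psi_j$, the upper bound $\psi_j(x)\le\theta_j f^p(x)+\tfrac12\|x-x_0\|^2$, the derivation $\sqrt{\theta_{j+1}}\ge\bigl(1+\sqrt{\alpha/(2L_f)}\bigr)\sqrt{\theta_j}$ from $a_{j+1}^2=\tfrac{2(1+\alpha\theta_j)}{L_f}\theta_{j+1}\ge\tfrac{2\alpha}{L_f}\theta_j\theta_{j+1}$, and the base case $\theta_1=a_1=2/L_f$, which together yield exactly the stated constant $L_f/4$ and exponent $-2(j-1)$. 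The one piece you leave as a sketch, the invariant $\psi_j^\star\ge\theta_j f^p(x_j)$, is indeed the crux, and you correctly locate where the parameter relation $a_{j+1}^2/\theta_{j+1}=2(1+\alpha\theta_j)/L_f$ must be spent. Be aware of one wrinkle there: in this variant of the method, $\psi_{j+1}$ is built from the linearization of $f$ at the \emph{new} point $x_{j+1}$, while $x_{j+1}$ itself is produced by a prox step anchored at $y_j$ using $\nabla f(y_j)$; closing the induction therefore requires the optimality condition of \eqref{eqn:psi_fast_grad_gen} (the composite gradient mapping at $y_j$) in addition to the descent lemma, not the descent lemma alone. This is exactly how the cited proof handles it, so your outline is on the right track, but that reconciliation is the step that would need to be written out to make the argument complete.
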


Let us now make a few observations that we will call on shortly. First, each iteration of Algorithm~\ref{alg:fast_grad_gen} only requires two gradient computations, $\nabla f(y_j)$ in \eqref{eqn:psi_fast_grad_gen} and $\nabla f(x_{j+1})$ in \eqref{eqn:fina_psi_def}, and two proximal operations, $\prox_{p/L_f}$ in \eqref{eqn:psi_fast_grad_gen}  and $\prox_{p}$ in \eqref{eqn:v_fast_grad_gen}. 

Secondly, let us translate the estimates in Theorem~\ref{eff:accle_meth}  to estimates based on desired accuracy. Namely, simple arithmetic shows 
that the inequality 
$$f^{p}(x_j)-f^p(x^*)\leq \varepsilon$$ holds as soon as the number of iterations $j$ satisfies
\begin{equation}\label{eqn:main_conv_estimate}
	j\geq  1+\sqrt{\frac{L_f}{2\alpha}}\cdot\log\left(\frac{L_f\|x^*-x_0\|^2}{4\varepsilon}\right).
\end{equation}
Let us now see how we can modify the scheme slightly so that it can find points $x$ with small subgradients. Given a point $x$ consider a single prox-gradient iteration $\hat x:=\prox_{\frac{p}{L_f}}\left(x-\tfrac{1}{L_f}\nabla f(x)\right)$. Then we successively deduce
$$\dist^2(0;\partial f^p(\hat x))\leq 4\|L_f(\hat x-x)\|^2\leq 8L_f(f^p(x)-f^p(\hat x))\leq 8L_f(f^p(x)-f^p(x^*))$$
where the first inequality is \eqref{eqn:stat_est} and the second is the descent guarantee of the prox-gradient method (e.g. \cite[Theorem 1]{nest_conv_comp}). Thus  the inequality $f^p(x)-f^p(x^*)\leq \varepsilon^2/(8L_f)$ would immediately imply $\dist(0;\partial f^p(\hat x))\leq \varepsilon$.
Therefore, let us add an extra prox-gradient step $\hat x_j:=\prox_{\frac{p}{L_f}}\left(x_j-\tfrac{1}{L_f}\nabla f(x_j)\right)$ to each iteration of Algorithm~\ref{alg:fast_grad_gen}. Appealing to the linear rate in \eqref{eqn:main_conv_estimate}, we then deduce that 
we can be sure 
of the inequality 
$$\dist(0;\partial f^p(\hat x_j))\leq \varepsilon$$  as soon as the number of iterations $j$ satisfies
\begin{equation}\label{eqn:subgrad_eff_est_fast}
	j\geq 1+\sqrt{\frac{L_f}{2\alpha}}\cdot\log\left(\frac{2L^2_f\|x^*-x_0\|^2}{\varepsilon^2}\right).
\end{equation}
With this modification, each iteration of the scheme requires two gradient evaluations of $f$ and three proximal operations of $p$.

\subsection{Total cost if $h$ is smooth}\label{sec:total_cost_smooth}
In this section, we will assume that $h$ is already $C^1$-smooth with the gradient having Lipschitz constant $L_h$, and calculate the overall cost of the inexact prox-linear method that wraps a linearly convergent method for the proximal subproblems. As we have discussed in Section~\ref{sec:inex_prox_lin}, the proximal subproblems can either be approximately solved by primal methods or by dual methods. The dual methods are better adapted for a global analysis, since the dual problem has a bounded domain; therefore let us look first at that setting.

\begin{remark}[Asymptotic notation]\label{rem:assympt_not}
	{\rm
		To make clear dependence on the problem's data, we will sometimes use asymptotic notation \cite[Section 3.5]{alg_o_not}. For two functions $\psi$ and $\Psi$ of a vector $\omega\in \R^{\ell}$, the symbol $\psi(\omega)=\mathcal{O}(\Psi(\omega))$ will mean that there exist constants $K,C>0$ such that the inequality, 
		$|\psi(\omega)|\leq C \cdot|\Psi(\omega)| $, holds for all $\omega$ satisfying $\omega_i\geq K$ for all $i=1,\ldots,\ell$. When using asymptotic notation in this section, we will use the vector $\omega$ to encode the data of the problem $\omega=(\|\nabla c\|,L_h, L,\beta,F(x_0)-F^*,1/\varepsilon)$. In the setting that $h$ is not differentiable, $L_h$ will be omitted from $\omega$.}
\end{remark}

\subsubsection*{Total cost based on dual near-stationarity in the subproblems}
We consider the near-stationarity model of inexactness  as in  Section~\ref{sec:near-stat}. Namely, let us compute the total cost of 
Algorithm~\ref{alg: inex_prox_lin_subgrad_exp}, when each subproblem $\min_w \varphi_k(w)$ is approximately minimized by the fast-gradient method (Algorithm~\ref{alg:fast_grad_gen}). In the notation of Section~\ref{sec:interlude_fast}, we set 
$f(w)= G_k^{\star}(A^*_k w)-\langle b_k,w\rangle$ and $p=h^{\star}$.
By Lemma~\ref{lem:expl_grad_comp},  the function $f$ is $C^1$-smooth with gradient having Lipschitz constant $L_f:=t\|\nabla c(x_k)\|^2_{\textrm{op}}$. Since $\nabla h$ is assumed to be $L_h$-Lipschitz, we deduce that $h^{\star}$ is 
$\frac{1}{L_{h}}$-strongly convex. Notice moreover that since $h$ is $L$-Lipschitz, any point in $\dom h^{\star}$ is bounded in norm by $L$; hence the diameter of $\dom h^{\star}$ is at most $2L$. Let us now apply
Algorithm~\ref{alg:fast_grad_gen} (with the extra prox-gradient step) to the problem $\min_{w} \varphi_k(w)=f(w)+p(w)$. According to the estimate  
\eqref{eqn:subgrad_eff_est_fast}, we will be able to find the desired point $w_{k+1}$ satisfying $\dist(0;\partial \varphi_k(w_{k+1})))\leq \varepsilon_{k+1}$ after at most 
\begin{equation}\label{eqn:specialize_fast_grad_to_conv}
	1+\ceil*{\sqrt{\frac{t\|\nabla c(x_k)\|^2_{\textrm{op}}L_h}{2}}\cdot\log\left(\frac{8t^2\|\nabla c(x_k)\|_{\textrm{op}}^4L^2}{\varepsilon^2_{k+1}}\right)}.
\end{equation}
iterations of the fast-gradient method. According to Lemma~\ref{lem:expl_grad_comp}, each gradient evaluation $\nabla f$ requires two-matrix vector multiplications and one proximal operation of $g$, while the proximal operation of $p$ amounts to a single proximal operation of $h$. Thus each iteration of  Algorithm~\ref{alg:fast_grad_gen}, with the extra prox-gradient step requires $9$ basic operations. Finally to complete step $k$ of Algorithm~\ref{alg:fast_grad_gen}, we must take one extra proximal map of $g$. Hence the number of basic operations needed to complete step $k$ of Algorithm~\ref{alg:fast_grad_gen} is $9\times$(equation \eqref{eqn:specialize_fast_grad_to_conv})$+1$, where we set  $t=1/\mu$.

%

Let us now compute the total cost across the outer iterations $k$. Theorem~\ref{thm: dual_pla} shows that if we set $\varepsilon_k=\frac{1}{L k^2}$ in each iteration $k$ of Algorithm \ref{alg: inex_prox_lin_subgrad_exp},
then after $N$ outer iterations we are guaranteed 
\begin{equation}\label{eqn:basic_inexact_eff_est}
	\min_{j =0, \hdots, N-1} \,  \norm{\mathcal{G}_{\frac{1}{\mu}}(x_j)}^2  \le
	\frac{4 \mu \big ( F(x_0)-F^* + 8  \big )}{N}.
\end{equation}
Thus we can find a point $x$ satisfying $$\norm{\mathcal{G}_{\frac{1}{\mu}}(x)}\leq \varepsilon$$ after at most $\mathcal{N}(\varepsilon):=\ceil*{\frac{4\mu(F(x_0)-F^*+8)}{\varepsilon^2}}$ outer-iterations and therefore after
\begin{equation}\label{eqn:dual_stat_exact}
	\ceil*{\frac{4\mu(F(x_0)-F^*+8)}{\varepsilon^2}}\left(10+9\ceil*{\sqrt{\frac{\|\nabla c\|^2L_h}{2\mu}}\cdot\log\left(\frac{8\|\nabla c\|^4L^2(1+\mathcal{N}(\varepsilon))^4}{\beta^2}\right)}\right)
\end{equation}
basic operations in total. Thus the number of basic operations is on the order of
\begin{equation}\label{eq:first_box}
	\boxed{\mathcal{O}\left({\frac{\sqrt{{\|\nabla c\|^2\cdot L_h\cdot \mu}}\cdot(F(x_0)-F^*)}{\varepsilon^2}}{~\log\left(\frac{\|\nabla c\|^2L^3\beta(F(x_0)-F^*)^2)}{\varepsilon^4}\right)}\right)}.
\end{equation}

%
%

%
%

\subsubsection*{Total cost based on approximate minimizers of the subproblems}
Let us look at what goes wrong with applying Algorithm~\ref{alg: inex_prox_lin}, with the proximal subproblems $\min_z\, F_t(z;x)$ approximately solved by a primal only method. 
To this end, notice that the objective function $F_t(\cdot;x)$ is a sum of the $\frac{1}{t}$-strongly convex and prox-friendly term $g+\frac{1}{2t}\|\cdot-x\|^2$ and the smooth convex function $z\mapsto h(c(x)+\nabla c(x)(z-x))$. The gradient of the smooth term 
is Lipschitz continuous with constant $\|\nabla c(x)\|^2_{\textrm{op}}L_h.$ Let us apply the fast gradient method (Algorithm~\ref{alg:fast_grad_gen}) to the proximal subproblem directly. According to the estimate \eqref{eqn:main_conv_estimate}, Algorithm~\ref{alg:fast_grad_gen} will find an $\varepsilon$-approximate minimizer $z$ of $F_t(\cdot;x)$ after at most 
\begin{equation}\label{eqn:cost_subsolve}
	1+\sqrt{\frac{t\|\nabla c(x)\|^2_{\textrm{op}}L_h}{2}}\cdot\log\left(\frac{\|\nabla c(x)\|^2_{\textrm{op}}L_h \|x^*-z_0\|^2}{4\varepsilon}\right)
\end{equation}
iterations, where $x^*$ is the minimizer of $F_t(\cdot;x)$ and the scheme is initialized at $z_0$. 
%
The difficulty is that there appears to be no simple way to bound the distance $\|z_0-z^*\|$ for each proximal subproblem, unless we assume that $\dom g$ is bounded. We next show how we can correct for this difficulty by more carefully coupling the inexact prox-linear algorithm and the linearly convergent algorithm for solving the subproblem. In particular, in each outer iteration of the proposed scheme (Algorithm~\ref{alg: inex_prox_lin_primal}), one runs a linearly convergent subroutine $\mathcal{M}$ on the prox-linear subproblem for a fixed number of iterations; this fixed number of inner iterations depends explicitly on $\mathcal{M}$'s linear rate of convergence. The algorithmic idea behind this coupling originates in \cite{quickening}. The most interesting consequence of this scheme is on so-called finite-sum problems, which we will discuss in Section~\ref{sec:fin_sum_prob}. In this context, the algorithms that one runs on the proximal subproblems are stochastic. Consequently, we adapt our analysis to a stochastic setting as well, proving  convergence  rates on  the expected norm of the prox-gradient $\|\mathcal{G}_t(x_k)\|$. 
When the proximal subproblems are approximately solved by deterministic methods, the convergence rates are all deterministic as well. 

The following definition makes precise the types of   algorithms that we will be able to accommodate as subroutines for the prox-linear subproblems.

\begin{defn}[Linearly convergent subscheme]\label{defn:lin_conv_subscheme}
	{\rm	A method $\mathcal{M}$ is {\em a linearly convergent subscheme} for the composite problem \eqref{eqn:comp2} if the following holds. For any points $x\in \R^d$, there exist constants $\gamma\geq 0$ and $\tau\in (0,1)$ so that when $\mathcal{M}$ is applied to $\min F_t(\cdot;x)$ with an arbitrary $z_0\in\dom g$ as an initial iterate, $\mathcal{M}$ generates a sequence $\{z_i\}_{i=1}^{\infty}$ satisfying 
		\begin{align}\label{eqn:main_lin_con_ineq}
			\mathbb{E}[F_t(z_i;x) - F_t(x^*;x)] \le \gamma \left ( 1- \tau \right )^i
			\|z_{0}-x^*\|^2 \qquad \textrm{for } i=1,\ldots,\infty,
		\end{align}
		where $x^*$ is the minimizer of $F_t(\cdot;x)$. 
	}
\end{defn}

We will be applying a linearly convergent subscheme to proximal subproblems $\min F_t(\cdot; x_k)$, where $x_k$ is generated in the previous iteration of an inexact prox-linear method. We will then denote the resulting constants $(\gamma,\tau)$ in the guarantee \eqref{eqn:main_lin_con_ineq} by $(\gamma_k,\tau_k)$. 

The overall method we propose is Algorithm~\ref{alg: inex_prox_lin_primal}. It is important to note that in order to implement this method, one must know explicitly the constants $(\gamma,\tau)$ for the method $\mathcal{M}$ on each proximal subproblem.

{\LinesNotNumbered
	\begin{algorithm}[h!]
		\SetKw{Null}{NULL}
		\SetKw{Return}{return}
		\Initialize{A point $x_0 \in \dom g$, real $t>0$,  a linearly convergent subscheme $\mathcal{M}$ for \eqref{eqn:comp2}. 
		}
		{\bf Step k:} ($k\geq 1)$  \\
		Set $x_{k,0}:=x_k$. Initialize $\mathcal{M}$
		on the problem $\min_z F_t(z;x_k)$ at $x_{k,0}$, and run $\mathcal{M}$ for
		\begin{equation}\label{eqn:prop_inner_iter}
			T_k:=\ceil[\bigg]{\frac{1}{\tau_k}\log(4t\gamma_k)}\qquad \textrm{iterations},
		\end{equation}
		
		thereby generating iterates $x_{k,1},\ldots,x_{k,T_k}$.
		
		Set $x_{k+1}=x_{k,T_k}$.
		\caption{Inexact prox-linear method: primal-only subsolves I}
		\label{alg: inex_prox_lin_primal}
\end{algorithm}}

The following lemma shows that the proposed number of inner iterations \eqref{eqn:prop_inner_iter} leads to significant progress in the prox-linear subproblems, compared with the initialization. Henceforth, we let $\mathbb{E}_{x_k}[\cdot]$ denote the expectation of a quantity conditioned on the iterate $x_k$.
\begin{lemma}\label{lem:trick_dep} The iterates  $x_k$ generated by Algorithm~\ref{alg: inex_prox_lin_primal} satisfy
	\begin{equation}\label{eqn:key_decr}
		\mathbb{E}_{x_k}[ F_t(x_{k+1}; x_k)-F_t(x^*_k; x_k)] \le  \frac{1}{4t} \norm{x_k-x^*_k}^2.
	\end{equation}
\end{lemma}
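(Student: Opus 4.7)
The plan is to apply the linear convergence guarantee of the subscheme $\mathcal{M}$ directly, with the initialization $z_0 = x_{k,0} = x_k$ and target minimizer $x^* = x_k^*$ of $F_t(\cdot;x_k)$, evaluated at iteration index $i = T_k$. By Definition~\ref{defn:lin_conv_subscheme} and the notational convention that the constants for the $k$-th subproblem are $(\gamma_k,\tau_k)$, this immediately yields
\[
\mathbb{E}_{x_k}[F_t(x_{k+1};x_k) - F_t(x_k^*;x_k)] \le \gamma_k (1-\tau_k)^{T_k} \|x_k - x_k^*\|^2.
\]

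The only remaining task is to show that the chosen number of inner iterations $T_k = \lceil \tau_k^{-1}\log(4t\gamma_k)\rceil$ forces the prefactor $\gamma_k(1-\tau_k)^{T_k}$ to be at most $\tfrac{1}{4t}$. This is the standard conversion of a linear rate into an iteration complexity: I would invoke the elementary bound $(1-\tau_k)^{T_k} \le \exp(-\tau_k T_k)$, followed by the defining inequality $\tau_k T_k \ge \log(4t\gamma_k)$ (which holds by the ceiling in \eqref{eqn:prop_inner_iter}), to obtain $(1-\tau_k)^{T_k} \le (4t\gamma_k)^{-1}$. Multiplying by $\gamma_k$ gives the desired prefactor $\tfrac{1}{4t}$.

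I do not anticipate any genuine obstacle here: the number $T_k$ in \eqref{eqn:prop_inner_iter} was manifestly reverse-engineered so that the contraction factor $\gamma_k(1-\tau_k)^{T_k}$ collapses to $1/(4t)$, and the expectation just passes through the linear bound because the stochasticity lives entirely inside $\mathcal{M}$ and is conditioned on $x_k$. One minor subtlety worth noting in the write-up is that $T_k$ itself depends on $(\gamma_k,\tau_k)$, which in turn are determined by the subproblem at $x_k$; conditioning on $x_k$ makes this dependence harmless, since both $T_k$ and $x_k^*$ are deterministic functions of $x_k$. Plugging everything together yields \eqref{eqn:key_decr}.
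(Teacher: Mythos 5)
Your proposal is correct and follows exactly the paper's own argument: apply the linear convergence guarantee \eqref{eqn:main_lin_con_ineq} at iteration $T_k$, then use $(1-\tau_k)^{T_k}\le e^{-\tau_k T_k}\le (4t\gamma_k)^{-1}$ to collapse the prefactor to $\tfrac{1}{4t}$. Nothing further is needed.
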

\begin{proof}
	In each iteration $k$, the linear convergence of algorithm $\mathcal{M}$ implies 
	\begin{align*}
		\mathbb{E}_{x_k}[F_t(x_{k+1}; x_k) - F_t(x_k^*;x_k)] &\le   \gamma_k \left ( 1- \tau_{k} \right )^{T_k}
		\|x_{k,0}-x^*_k\|^2 \\
		&\le \gamma_k e^{-\tau_{k}
			T_k}\|x_k-x^*_k\|^2\leq \frac{1}{4t}\|x_k-x^*_k\|^2,
	\end{align*}
	as claimed.
\end{proof}

With this lemma at hand, we can establish convergence guarantees of the inexact method.

\begin{theorem}[Convergence of Algorithm~\ref{alg: inex_prox_lin_primal}]
	Supposing $t \le \mu^{-1}$, the iterates $x_k$ generated by
	Algorithm~\ref{alg: inex_prox_lin_primal} satisfy
	\[ \min_{j=0, \hdots, N-1} \mathbb{E}[\norm{\mathcal{G}_t(x_{j}) }^2] \le
	\frac{4t^{-1} \left (F(x_0)-\inf F \right ) }{N}.\]
	\label{thm:conv_couple}
\end{theorem}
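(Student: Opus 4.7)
The plan is to combine the decrease guarantee from Lemma~\ref{lem:trick_dep} with the majorization property of $F_t(\cdot;x_k)$, giving a telescoping bound on $F(x_k)$ in expectation. This parallels the proof of Proposition~\ref{prop:basic_prox_lin}, except that instead of exact minimization providing descent via strong convexity, we use the approximate decrease \eqref{eqn:key_decr} bought by the $T_k$ inner iterations.

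Concretely, I will first note that by strong convexity of $F_t(\cdot;x_k)$ with modulus $1/t$ (and the fact that $x_k^*$ is its minimizer), one has
\[
F_t(x_k^*;x_k)\ \le\ F_t(x_k;x_k)-\tfrac{1}{2t}\|x_k-x_k^*\|^2\ =\ F(x_k)-\tfrac{t}{2}\|\mathcal{G}_t(x_k)\|^2,
\]
where I have used the definition $\mathcal{G}_t(x_k)=t^{-1}(x_k-x_k^*)$. Combining with Lemma~\ref{lem:trick_dep} and the identity $\tfrac{1}{4t}\|x_k-x_k^*\|^2=\tfrac{t}{4}\|\mathcal{G}_t(x_k)\|^2$, I obtain
\[
\mathbb{E}_{x_k}[F_t(x_{k+1};x_k)]\ \le\ F(x_k)-\tfrac{t}{4}\|\mathcal{G}_t(x_k)\|^2.
\]

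Next, since $t\le\mu^{-1}$, Lemma~\ref{lem:upper_lower} implies $F_t(\cdot;x_k)$ is an upper model of $F$, i.e.\ $F(x_{k+1})\le F_t(x_{k+1};x_k)$. Therefore
\[
\mathbb{E}_{x_k}[F(x_{k+1})]\ \le\ F(x_k)-\tfrac{t}{4}\|\mathcal{G}_t(x_k)\|^2.
\]
Taking full expectations, rearranging, and summing this telescoping inequality for $j=0,\dots,N-1$ gives
\[
\tfrac{t}{4}\sum_{j=0}^{N-1}\mathbb{E}[\|\mathcal{G}_t(x_j)\|^2]\ \le\ F(x_0)-\mathbb{E}[F(x_N)]\ \le\ F(x_0)-\inf F,
\]
and the claim follows by lower-bounding the sum by $N$ times its minimum summand.

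I do not anticipate any real obstacle: Lemma~\ref{lem:trick_dep} has already absorbed the subtle point (the choice of $T_k$ is what lets the coefficient $\tfrac{1}{4t}$ be half of the strong-convexity coefficient $\tfrac{1}{2t}$, so that a strictly positive fraction of $\tfrac{t}{2}\|\mathcal{G}_t(x_k)\|^2$ survives as net descent). The only things to be careful about are (i) using the tower property correctly when passing from conditional to full expectations, and (ii) not confusing $x_k^*$ (the exact proximal point used only in the analysis) with $x_{k+1}$ (the actual inexact iterate).
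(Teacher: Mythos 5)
Your proof is correct and follows essentially the same route as the paper's: strong convexity of $F_t(\cdot;x_k)$, the decrease guarantee of Lemma~\ref{lem:trick_dep}, the upper-model property from Lemma~\ref{lem:upper_lower}, and the tower rule, followed by telescoping. The only difference is cosmetic (you upper-bound $\mathbb{E}_{x_k}[F(x_{k+1})]$ where the paper lower-bounds $\mathbb{E}_{x_k}[F(x_k)-F(x_{k+1})]$), and your identification of $\mathcal{G}_t(x_k)=t^{-1}(x_k-x_k^*)$ with the exact proximal point is exactly as in the paper.
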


\begin{proof}
	The proof follows the same outline as Theorem~\ref{thm: funct_pla}. 
	Observe 
	\begin{align*}
		\mathbb{E}_{x_{k}}[F(x_{k}) -F(x_{k+1})] &= \mathbb{E}_{x_{k}}[F_t(x_{k}; x_{k})- F(x_{k+1})] \\
		&\ge \mathbb{E}_{x_{k}} [F_t(x_{k}^*; x_{k})- F(x_{k+1}) +
		\frac{1}{2t}
		\norm{x_{k}-x_{k}^*}^2]\\
		&\geq  \mathbb{E}_{x_{k}} [F_t(x_{k}^*; x_{k})-F_t(x_{k+1};x_{k})]+\frac{1}{2t} \norm{x_{k}-x_{k}^*}^2\\
		&\ge  - \frac{1}{4t} \norm{x_{k}-x^*_{k}}^2 +
		\frac{1}{2t} \norm{x_{k}-x^*_{k}}^2\\
		&\ge \frac{t}{4} \norm{\mathcal{G}_t(x_{k})}^2,
	\end{align*}
	%
	where the second line follows from strong convexity of $F_t(\cdot;x_{k})$, the third  from Lemma~\ref{lem:upper_lower}, and the fourth from Lemma~\ref{lem:trick_dep}. Taking expectations of both sides, and using the tower rule, we deduce
	$$\mathbb{E}[F(x_{k}) -F(x_{k+1})] \geq \frac{t}{4}\mathbb{E}[\norm{\mathcal{G}_t(x_{k})}^2].$$
	Summing up both sides, we deduce 
	\begin{align*}
		\frac{t}{4} \min_{j =0, \hdots, N-1}
		\mathbb{E}[\norm{\mathcal{G}_t(x_{j})}^2] \le \frac{t}{4N} \sum_{j=0}^{N-1}
		\mathbb{E}[\norm{\mathcal{G}_t(x_{j})}^2] \le \frac{1}{N} \sum_{j=0}^{N-1}
		\mathbb{E}[F(x_{j})-F(x_{j+1})] \le \frac{F(x_0)-\inf F}{N}, 
	\end{align*}
	as claimed. 
\end{proof}


It is clear from the proof that if the inner algorithm $\mathcal{M}$ satisfies \eqref{eqn:main_lin_con_ineq} with the expectation $\mathbb{E}_{x_k}$ omitted, then  Theorem~\ref{thm:conv_couple} holds with $\mathbb{E}$ omitted as well and with $\inf F$ replaced by $F^*:=\lf_{k\to\infty} F(x_i)$. In particular, let us suppose that we set $t=\mu^{-1}$ and let $\mathcal{M}$ be the fast-gradient method (Algorithm~\ref{alg:fast_grad_gen}) applied to the primal problem. 
Then in each iteration $k$, we can set $L_f=\|\nabla c(x_k)\|^2_{\textrm{op}}L_h$ and $\alpha=\mu$. Let us now determine $\gamma_k$ and $\tau_k$. Using the inequality $(1+\sqrt{\tfrac{\alpha}{2L_f}})^{-1}\leq 1-\sqrt{\tfrac{\alpha}{2L_f}}$ along with Theorem~\ref{eff:accle_meth}, we deduce we can set $\gamma_k=\frac{L_f}{4}$ and $\tau_k=\sqrt{\tfrac{\alpha}{2L_f}}$ for all indices $k$. Then each iteration of Algorithm~\ref{alg: inex_prox_lin_primal} performs 
$T=\ceil*{\sqrt{\frac{2\|\nabla c(x_k)\|^2_{\textrm{op}}L_h}{\mu}}\log\left(\|\nabla c(x_k)\|^2_{\textrm{op}}L_h/\mu\right)}$ iterations of the fast-gradient method, Algorithm~\ref{alg:fast_grad_gen}. Recall that each iteration of Algorithm~\ref{alg:fast_grad_gen} requires $8$ basic operations. Taking into account Theorem~\ref{alg: inex_prox_lin_primal}, we deduce that the overall scheme will produce a point $x$ satisfying $$\norm{\mathcal{G}_{\frac{1}{\mu}}(x)}\leq \varepsilon$$ 
after at most 
\begin{equation}\label{eqn:primal_only_total_exact}
	8\ceil*{  \frac{4\mu \left (F(x_0)- F^* \right ) }{\varepsilon^2}}\ceil*{\sqrt{\frac{2\|\nabla c\|^2L_h}{\mu}}\,\log\left(\frac{\|\nabla c\|^2L_h}{\mu}\right)}
\end{equation}
basic operations. Thus the number of basic operations is on the order of
\begin{equation}\label{eqn:boxed_eff_est_smooth_sing}
	\boxed{\mathcal{O}\left(  \frac{ \sqrt{\|\nabla c\|^2\cdot L_h\cdot \mu}\cdot\left (F(x_0)- F^* \right ) }{\varepsilon^2}~\log\left(\frac{\|\nabla c\|^2L_h}{\mu}\right)\right)}.
\end{equation}
Notice this estimate is better than \eqref{eq:first_box}, but only in terms of logarithmic dependence.
%

Before moving on, it is instructive to comment on the functional form of the linear convergence guarantee in \eqref{eqn:main_lin_con_ineq}. The right-hand-side depends on the initial squared distance $\|z_0-x^*\|^2$. Convergence rates of numerous algorithms, on the other hand, are often stated with the right-hand-side instead depending on the initial functional error  $\displaystyle F_t(z_0;x)-\inf_z F_t(z; x)$. In particular, this is the case for  algorithms for finite sum problems discussed in Section~\ref{sec:fin_sum_prob}, such as SVRG \cite{svrg} and SAGA \cite{saga}, and their accelerated extensions \cite{catalyst,accsvrg,frostig}. The following easy lemma shows how any such algorithm can be turned into a linearly convergent subscheme, in the sense of Definition~\ref{defn:lin_conv_subscheme}, by taking a single extra prox-gradient step. We will use this observation in Section~\ref{sec:fin_sum_prob}, when discussing finite-sum problems.

\begin{lemma}\label{lem:put_stand_form}
	Consider an optimization problem having the convex additive composite form \eqref{eqn:conv_comp_gen_accel}. Suppose $\mathcal{M}$ is an algorithm for $\min_z f^p(z)$ satisfying: there exist constants $\gamma\geq 0$ and $\tau\in (0,1)$ so that on any input $z_0$, the method $\mathcal{M}$ generates a sequence $\{z_i\}_{i=1}^{\infty}$ satisfying 
	\begin{equation}\label{eqn:cray_znot}
		\mathbb{E}[f^p(z_i) - f^p(z^*)] \le \gamma \left ( 1- \tau \right )^i
		(f^p(z_{0})-f^p(z^*) ) \qquad \textrm{for } i=1,\ldots,\infty,
	\end{equation}
	where $z^*$ is a minimizer of $f^p$. Define an augmented method $\mathcal{M}^+$ as follows: given input $z_0$,  initialize $\mathcal{M}$ at the point $\prox_{p/L_f}(z_0-\frac{1}{L_f}\nabla f(z_0))$ and output the resulting points $\{z_i\}_{i=1}^\infty$. Then the iterates generates by $\mathcal{M}^+$ satisfy
	$$\mathbb{E}[f^p(z_i) - f^p(z^*)] \le \frac{\gamma L_f}{2} \left ( 1- \tau \right )^i\|z_0-z^*\|^2 \qquad \textrm{for } i=1,\ldots,\infty,$$
\end{lemma}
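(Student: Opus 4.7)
The plan is to produce a single prox-gradient descent inequality that bounds the initial functional error of $\mathcal{M}$ (when warm-started at the prox-gradient iterate) in terms of the squared distance $\|z_0-z^*\|^2$, and then invoke the hypothesis \eqref{eqn:cray_znot} of $\mathcal{M}$ as a black box.

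More concretely, set $\tilde z_0:=\prox_{p/L_f}(z_0-\tfrac{1}{L_f}\nabla f(z_0))$, which is the initial point fed to $\mathcal{M}$ inside $\mathcal{M}^+$. The first step is to establish the standard Beck--Teboulle style prox-gradient inequality
\[
f^p(\tilde z_0)-f^p(z^*)\ \le\ \tfrac{L_f}{2}\bigl(\|z_0-z^*\|^2-\|\tilde z_0-z^*\|^2\bigr)\ \le\ \tfrac{L_f}{2}\|z_0-z^*\|^2.
\]
This is proved in the usual way: use the descent lemma for $f$ on the pair $(z_0,\tilde z_0)$ to get
$f(\tilde z_0)\le f(z_0)+\langle\nabla f(z_0),\tilde z_0-z_0\rangle+\tfrac{L_f}{2}\|\tilde z_0-z_0\|^2$, combine with the subgradient inequality for $p$ using the optimality condition $L_f(z_0-\tilde z_0)-\nabla f(z_0)\in\partial p(\tilde z_0)$ to compare $p(\tilde z_0)$ with $p(z^*)$ and $f(z_0)+\langle\nabla f(z_0),z^*-z_0\rangle$ with $f(z^*)$, and then complete the square. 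This yields the displayed inequality; I would reference any standard text rather than redoing the calculation in full.

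The second step is to apply the hypothesis \eqref{eqn:cray_znot} to the run of $\mathcal{M}$ started at $\tilde z_0$, which gives
\[
\mathbb{E}[f^p(z_i)-f^p(z^*)]\ \le\ \gamma(1-\tau)^i\bigl(f^p(\tilde z_0)-f^p(z^*)\bigr).
\]
Chaining this with the first-step bound immediately produces
\[
\mathbb{E}[f^p(z_i)-f^p(z^*)]\ \le\ \tfrac{\gamma L_f}{2}(1-\tau)^i\|z_0-z^*\|^2,
\]
as required. Because $\tilde z_0$ is deterministic given $z_0$, no extra care with expectations or a tower-rule argument is needed; the randomness lives entirely inside $\mathcal{M}$.

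There is no substantial obstacle in this proof. The only non-trivial ingredient is the one-step prox-gradient descent estimate above, which is entirely classical; the rest is a direct substitution into the hypothesized linear rate for $\mathcal{M}$. The lemma therefore reads as a packaging statement converting a ``functional-error to functional-error'' linear rate into a ``distance-squared to functional-error'' linear rate at the cost of one extra gradient step and a multiplicative factor of $L_f/2$.
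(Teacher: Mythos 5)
Your proposal is correct and follows essentially the same route as the paper: both warm-start $\mathcal{M}$ at the prox-gradient point and reduce the claim to the one-step bound $f^p(\tilde z_0)-f^p(z^*)\le \tfrac{L_f}{2}\|z_0-z^*\|^2$, then invoke the hypothesized rate as a black box. The only cosmetic difference is that the paper obtains this bound by evaluating the quadratic model minimized by $\tilde z_0$ at $z^*$ and using convexity of $f$, rather than via the optimality condition and completing the square, but these are equivalent derivations of the same classical estimate.
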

\begin{proof}
	Set $\hat z:=\prox_{p/L_f}(z_0-\frac{1}{L_f}\nabla f(z_0))$. Then convergence guarantees \eqref{eqn:cray_znot} of $\mathcal{M}$, with $\hat z$ in place of $z_0$, read
	$$\mathbb{E}[f^p(z_i) - f^p(z^*)] \le \gamma \left ( 1- \tau \right )^i
	(f^p(\hat z)-f^p(z^*) ) \qquad \textrm{for } i=1,\ldots,\infty.$$
	Observe the inequality  $f^p(\hat z)\leq f(z_0)+\langle \nabla f(z_0),\hat z-z_0\rangle+p(\hat z)+\frac{L_{f}}{2}\|\hat z-z_0\|^2$. By definition, $\hat z$ is the minimizer of the function $z\mapsto f(z_0)+\langle \nabla f(z_0),z-z_0\rangle+p(z)+\frac{L_{f}}{2}\|z-z_0\|^2$, and hence we deduce $f^p(\hat z)\leq  f(z_0)+\langle \nabla f(z_0),z^*-z_0\rangle+p(z^*)+\frac{L_{f}}{2}\|z^*-z_0\|^2\leq f^p(z^*)+\frac{L_{f}}{2}\|z^*-z_0\|^2$, with the last inequality follows from convexity of $f$. The result follows.
\end{proof}

\subsection{Total cost of the smoothing strategy}\label{sec:tot_cost_smoothing_strat}
The final ingredient is to replace $h$ by a smooth approximation
and then minimize the resulting composite function by an inexact prox-linear method (Algorithms~\ref{alg: inex_prox_lin_subgrad_exp} or \ref{alg: inex_prox_lin_primal}). 
Define the smoothed composite function
\begin{equation}\label{eqn:smoothed_prob}
	F^{\nu}(x):=g(x)+h_{\nu}(c(x)),
\end{equation}
where $h_{\nu}$ is the Moreau envelope of $h$.
Recall from Lemma~\ref{lem:lip_cont} the three key properties of the Moreau envelope: $$\lip(h_{\nu})\leq L,\qquad\lip(\nabla h_\nu)\leq \frac{1}{\nu},$$ and 
$$0\leq h(z)-h_{\nu}(z)\leq \frac{L^2\nu}{2}\qquad \textrm{ for all } z\in \R^m.$$
Indeed, these are the only properties of the smoothing we will use; therefore, in the analysis, any smoothing satisfying the analogous properties can be used instead of the Moreau envelope.


Let us next see how to choose the smoothing parameter $\nu>0$ based on a target accuracy $\varepsilon$ on the norm of the prox-gradient $\|\mathcal{G}_{t}(x)\|$. Naturally, we must establish a relationship between the step-sizes of the prox-linear steps on the original problem and its smooth approximation. To distinguish between these two settings, we will use the notation
\begin{align*}
	x^+ &=\argmin_z \left \{ h \big (c(x) + \nabla c(x) (z-x) \big ) +
	g(z) + \tfrac{1}{2t} \norm{z-x}^2 \right \},\\
	\widehat x &= \argmin_z \left \{ h_\nu\big ( c(x) + \nabla c(x) (z-x)
	\big ) + g(z) + \tfrac{1}{2t} \norm{z-x}^2 \right \}, \\
	\mathcal{G}_t(x)&=t^{-1}(x^+-x),\\
	\mathcal{G}^{\nu}_t(x)&=t^{-1}(\widehat x-x).
\end{align*}
Thus $\mathcal{G}_t(x)$ is the prox-gradient on the target problem \eqref{eqn:comp2} as always, while $\mathcal{G}^{\nu}_t(x)$ is the prox-gradient on the smoothed problem \eqref{eqn:smoothed_prob}. The following theorem will motivate our strategy for choosing the smoothing parameter $\nu$.

\begin{theorem}[Prox-gradient comparison]\label{thm:smooth_est}
	For any point $x$, the inequality holds:
	$$\left\|\mathcal{G}_{t}(x)\right\|\leq \left\|\mathcal{G}^{\nu}_{t}(x)\right\|+\sqrt{\frac{L^2\nu}{2t}}.$$
\end{theorem}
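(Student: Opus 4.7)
The plan is to bound the distance between the two proximal points $x^+$ and $\widehat x$ using the standard ``strong convexity plus uniform approximation'' trick, then finish with a triangle inequality.

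First, I would set up notation by defining the two objective functions
\[
\phi(z) := h\bigl(c(x)+\nabla c(x)(z-x)\bigr)+g(z)+\tfrac{1}{2t}\|z-x\|^2,\qquad
\phi_\nu(z) := h_\nu\bigl(c(x)+\nabla c(x)(z-x)\bigr)+g(z)+\tfrac{1}{2t}\|z-x\|^2.
\]
Both are $\tfrac{1}{t}$-strongly convex, and $x^+$ (resp. $\widehat x$) is the unique minimizer of $\phi$ (resp. $\phi_\nu$). The key observation is that by Lemma~\ref{lem:lip_cont}, the envelope satisfies $0\le h(w)-h_\nu(w)\le L^2\nu/2$ for every $w\in\R^m$; applied pointwise to $w=c(x)+\nabla c(x)(z-x)$, this gives the uniform bound
\[
0\le \phi(z)-\phi_\nu(z)\le \tfrac{L^2\nu}{2}\qquad\text{for every }z.
\]

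Next I would apply strong convexity at the two minimizers: optimality of $x^+$ for $\phi$ yields $\phi(\widehat x)-\phi(x^+)\ge \tfrac{1}{2t}\|x^+-\widehat x\|^2$, and optimality of $\widehat x$ for $\phi_\nu$ yields $\phi_\nu(x^+)-\phi_\nu(\widehat x)\ge \tfrac{1}{2t}\|x^+-\widehat x\|^2$. Adding these and regrouping,
\[
\bigl[\phi(\widehat x)-\phi_\nu(\widehat x)\bigr]-\bigl[\phi(x^+)-\phi_\nu(x^+)\bigr]\ge \tfrac{1}{t}\|x^+-\widehat x\|^2.
\]
The left-hand side is at most $L^2\nu/2$ by the uniform approximation bound (upper bound on the first bracket, nonnegativity of the second), so $\|x^+-\widehat x\|\le \sqrt{tL^2\nu/2}$.

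Finally, I would apply the triangle inequality $\|x-x^+\|\le \|x-\widehat x\|+\|x^+-\widehat x\|$ and divide through by $t$ to recover the stated bound
\[
\|\mathcal{G}_t(x)\|\le \|\mathcal{G}_t^\nu(x)\|+\tfrac{1}{t}\sqrt{tL^2\nu/2}=\|\mathcal{G}_t^\nu(x)\|+\sqrt{\tfrac{L^2\nu}{2t}}.
\]
There is no real obstacle here; the only subtle point is recognizing that one must add the two strong-convexity inequalities (rather than subtract) so that the resulting left-hand side is the \emph{difference} of the approximation gaps $\phi-\phi_\nu$ at the two points, which is controlled uniformly by $L^2\nu/2$.
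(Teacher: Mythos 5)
Your proposal is correct and is essentially the paper's own argument: the paper chains the same four inequalities (strong convexity of each objective at its minimizer, plus the two-sided envelope bound $0\le \phi-\phi_\nu\le L^2\nu/2$) into a single display, cancels $F_t(x^+;x)$ from both ends to get $t^{-1}\|\widehat x-x^+\|^2\le L^2\nu/2$, and finishes with the same triangle inequality. The only cosmetic difference is that you add the two strong-convexity inequalities explicitly rather than telescoping them through a chain.
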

\begin{proof} Applying Lemma~\ref{lem:lip_cont} and strong convexity of the proximal subproblems, we deduce
	\begin{align*}
		F_{t}(x^+;x)
		& \le F_{t}(\widehat x;x) - \frac{1}{2t} \norm{\widehat{x}-x^+}^2\\
		&\le \Big(h_{\nu} \big (c(x) + \nabla c(x) (\widehat{x} -x) \big )  + g(\widehat{x})+ \frac{1}{2t}
		\norm{\widehat{x}-x}^2  \Big)+ \frac{L^2 \nu}{2} - \frac{1}{2t} \norm{\widehat{x}-x^+}^2\\
		&\le \Big(h_\nu \big ( c(x) + \nabla c(x) (x^+-x) \big )  + g(x^+) + \frac{1}{2t} \norm{x^+-x}^2 \Big)+ \frac{L^2
			\nu}{2} - t^{-1}
		\norm{\widehat{x}-x^+}^2\\
		&\le F_{t}(x^+;x)  + \frac{L^2
			\nu}{2}  - t^{-1} \norm{\widehat{x}-x^+}^2.
	\end{align*}
	Canceling out like terms, we conclude
	$ t^{-1}\norm{\widehat{x}-x^+}^2 \le \frac{L^2 \nu}{2}$. 
	The triangle inequality then yields
	\[ t^{-1} \norm{x^+-x} \le t^{-1}\norm{\widehat{x}-x} +\sqrt{\frac{L^2\nu}{2t}}, \]
	as claimed.`
\end{proof}

Fix a target accuracy $\varepsilon>0$. The strategy for choosing the smoothing parameter $\nu$ is now clear. Let us set $t=\frac{1}{\mu}$ and then ensure $\frac{\varepsilon}{2}=\sqrt{\frac{L^2\nu}{2t}}$ by setting $\nu:=\frac{\varepsilon^2}{2L^3\beta}$.
Then by Theorem~\ref{thm:smooth_est}, any point $x$ satisfying $\|\mathcal{G}^{\nu}_{1/\mu}(x)\|\leq \frac{\varepsilon}{2}$ would automatically satisfy the desired condition $\|\mathcal{G}_{1/\mu}(x)\|\leq \varepsilon$. Thus we must only estimate the cost of obtaining such a point $x$. Following the discussion in Section~\ref{sec:total_cost_smooth}, we can apply either of the Algorithms~\ref{alg: inex_prox_lin_subgrad_exp} or \ref{alg: inex_prox_lin_primal}, along with the fast-gradient method (Algorithm~\ref{alg:fast_grad_gen}) for the inner subsolves, to the problem $\min_x F^{\nu}(x)=g(x)+h_{\nu}(c(x))$. We note that for a concrete implementation, one needs the following formulas, complementing Lemma~\ref{lem:expl_grad_comp}.

\begin{lemma}\label{lem:der_more_env}
	For any point $x$ and real $\nu$, $t>0$, the following are true:
	$$\prox_{th_{\nu}}(x)=(\tfrac{\nu}{t+\nu})\cdot x+(\tfrac{t}{t+\nu})\cdot\prox_{(t+\nu)h}(x)\qquad \textrm{and}\qquad \nabla{h_{\nu}}(x)=\tfrac{1}{\nu}(x-\prox_{\nu h}(x)).$$
\end{lemma}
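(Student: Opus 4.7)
The second identity $\nabla h_\nu(x) = \nu^{-1}(x - \prox_{\nu h}(x))$ is nothing new: it is precisely the gradient formula recorded in Lemma~\ref{lem:lip_cont} applied to $h$ in place of $f$, so I would simply cite that lemma.

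The substantive content is the prox-splitting identity, and the natural route is to unfold the two infima and swap the order of minimization. Specifically, using the definition of $h_\nu$, I would write
\[
\prox_{th_\nu}(x)=\argmin_{z}\inf_{y}\Bigl\{h(y)+\tfrac{1}{2\nu}\|y-z\|^{2}+\tfrac{1}{2t}\|z-x\|^{2}\Bigr\},
\]
view this as a joint minimization over the pair $(z,y)$, and exchange the two infima. For each fixed $y$, the inner $z$-problem is an unconstrained strongly convex quadratic with unique minimizer
\[
z^{*}(y)=\tfrac{\nu}{t+\nu}\,x+\tfrac{t}{t+\nu}\,y,
\]
obtained by setting the gradient to zero. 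Substituting $z^{*}(y)$ back collapses the two quadratic penalties into a single one,
\[
\tfrac{1}{2\nu}\|y-z^{*}(y)\|^{2}+\tfrac{1}{2t}\|z^{*}(y)-x\|^{2}=\tfrac{1}{2(t+\nu)}\|y-x\|^{2},
\]
which is a direct algebraic check.

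Consequently the outer problem becomes $\min_{y}\bigl\{h(y)+\tfrac{1}{2(t+\nu)}\|y-x\|^{2}\bigr\}$, whose unique minimizer is $y^{*}=\prox_{(t+\nu)h}(x)$. Plugging $y^{*}$ into the formula for $z^{*}(y)$ yields the claimed identity
\[
\prox_{th_\nu}(x)=\tfrac{\nu}{t+\nu}\,x+\tfrac{t}{t+\nu}\,\prox_{(t+\nu)h}(x).
\]
I do not foresee a real obstacle here: uniqueness of all minimizers involved (from strong convexity of the quadratic penalties plus convexity of $h$) justifies the swap of infima and guarantees that the composition of argmins reconstructs $\prox_{th_\nu}(x)$. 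The only step that requires mild care is verifying the quadratic identity collapsing the two penalties into $\tfrac{1}{2(t+\nu)}\|y-x\|^{2}$, but this is a one-line completion-of-squares computation.
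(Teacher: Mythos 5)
Your proposal is correct and follows essentially the same route as the paper: both cite Lemma~\ref{lem:lip_cont} for the gradient formula and both establish the prox-splitting identity by writing $\prox_{th_\nu}(x)$ as a joint minimization over two variables, exchanging the infima, and identifying the optimal pair with the two proximal points (your explicit formula $z^{*}(y)=\tfrac{\nu}{t+\nu}x+\tfrac{t}{t+\nu}y$ is exactly the rearranged first-order condition $0=\nu^{-1}(y-z)+t^{-1}(y-x)$ used in the paper). The completion-of-squares check collapsing the two penalties into $\tfrac{1}{2(t+\nu)}\|y-x\|^{2}$ is correct.
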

\begin{proof}
	The expression $\nabla{h_{\nu}}(x)=\tfrac{1}{\nu}(x-\prox_{\nu h}(x))$ was already recorded in Lemma~\ref{lem:lip_cont}. 
	Observe the chain of equalities
	\begin{align}
		\min_y~ \left\{h_{\nu}(y)+\frac{1}{2t}\|y-x\|^2\right\}&=\min_y\min_z~ \left\{h(z)+\frac{1}{2\nu}\|z-y\|^2+\frac{1}{2t}\|y-x\|^2\right\}\label{eqn:where take_der}\\
		&=\min_z \left\{h(z)+\frac{1}{2(t+\nu)}\left\|z-x\right\|^2\right\},\notag
	\end{align}
	where the last equality follows by exchanging the two mins in \eqref{eqn:where take_der}. By the same token, taking the derivative with respect to $y$ in \eqref{eqn:where take_der}, we conclude that the optimal pair $(y,z)$ must satisfy the equality $0=\nu^{-1}(y-z)+t^{-1}(y-x)$. Since the optimal $y$ is precisely $\prox_{th_{\nu}}(x)$ and the optimal $z$ is given by $\prox_{(t+\nu)h}(x)$, the result follows.
	%
	%
	%
\end{proof}

Let us apply Algorithm~\ref{alg: inex_prox_lin_subgrad_exp} with the fast-gradient dual subsolves, as described in Section~\ref{sec:total_cost_smooth}.
Appealing to \eqref{eqn:dual_stat_exact} with $L_h=\tfrac{1}{\nu}=\frac{2L^3\beta}{\varepsilon^2}$ and $\varepsilon$ replaced by $\varepsilon/2$, we deduce that the scheme will find a point $x$ satisfying 
$\|\mathcal{G}_{1/\mu}(x)\|\leq \varepsilon$
after at most 
$$\mathcal{N}(\varepsilon)\cdot\left(10+9\ceil*{{\frac{\|\nabla c\|L}{\varepsilon}}\cdot\log\left(\frac{8\|\nabla c\|^4L^2(1+\mathcal{N}(\varepsilon))^4}{\beta^2}\right)}\right)$$
basic operations, where $\mathcal{N}(\varepsilon):=\ceil*{\frac{16\mu\left(F(x_0)-\inf F+8+\tfrac{\varepsilon^2}{4\mu}\right)}{\varepsilon^2}}$.
Hence the total cost is on the order\footnote{\label{foot:assympt_2} Here, we use the asymptotic notation described in Remark~\ref{rem:assympt_not} with $\omega=(\|\nabla c\|, L,\beta,F(x_0)-\inf F,1/\varepsilon)$.} of
\begin{equation}\label{eqn:final_cost1}
	\boxed{\mathcal{O}\left({\frac{L^2\beta\|\nabla c\|\cdot(F(x_0)-\inf F)}{\varepsilon^3}}{~\log\left(\frac{\|\nabla c\|^2L^3\beta(F(x_0)-\inf F)^2}{\varepsilon^4}\right)}\right)}.
\end{equation}

Similarly, let us apply Algorithm~\ref{alg: inex_prox_lin_primal} with fast-gradient primal subsolves, as described in Section~\ref{sec:total_cost_smooth}.
Appealing to \eqref{eqn:primal_only_total_exact}, we deduce that the scheme will find a point $x$ satisfying $\|\mathcal{G}_{1/\mu}(x)\|\leq \varepsilon$
after at most 
$$8\ceil*{  \frac{16\mu \left (F(x_0)- \inf F+\tfrac{\varepsilon^2}{4\mu} \right ) }{\varepsilon^2}}\ceil*{{\frac{2\|\nabla c\| L}{\varepsilon}}\,\log\left(\frac{2\|\nabla c\|^2L^2}{\varepsilon^2}\right)}$$
basic operations. Thus the cost is on the order\cref{foot:assympt_2} of
\begin{equation}\label{eqn:final_cost2}
	\boxed{\mathcal{O}\left(  \frac{ L^2\beta\|\nabla c\|\cdot\left (F(x_0)- \inf F \right ) }{\varepsilon^3}~\log\left(\frac{\|\nabla c\| L}{\varepsilon}\right)\right)}.
\end{equation}
Notice that the two estimates \eqref{eqn:final_cost1} and \eqref{eqn:final_cost2} are identical up to a logarithmic dependence on the problem data.
To the best of our knowledge, these are the best-known efficiency estimates of any first-order method for the composite problem class \eqref{eqn:comp2}. 

The logarithmic dependence in the estimates \eqref{eqn:final_cost1} and \eqref{eqn:final_cost2} can be removed entirely, by a different technique, provided we have available an accurate estimate on $F(x_0)-\inf F$ and an a priori known estimate $\|\nabla c\|$ to be used throughout the procedure. Since we feel that the resulting scheme is less practical than the ones outlined in the current section, we have placed the details in Appendix~\ref{sec:app_dual_meth}.

\section{Finite sum problems}\label{sec:fin_sum_prob}
In this section, we extend the results of the previous sections to so-called ``finite sum problems'', also often called ``regularized empirical risk minimization''.
More precisely, throughout the section instead of minimizing a single composite function, we will be interested in minimizing an average of $m$ composite  functions:
\begin{equation}\label{eqn:sum_problem}
	\min_{x} ~ F(x):=\frac{1}{m}\sum_{i=1}^m h_i(c_i(x))+g(x)
\end{equation}
In line with the previous sections, we make the following assumptions on the components of the problem:
\begin{enumerate}
	\item $g$ is a closed convex function;
	\item 	$h_i\colon\R\to\R$ are convex, and $ L$-Lipschitz continuous;
	\item $c_i\colon\R^{d}\to\R$ are $C^1$-smooth with the gradient map $\nabla c_i$ that is $\beta$-Lipschitz continuous.
\end{enumerate}
We also assume that we have available a real value, denoted $\|\nabla c\|$, satisfying
$${\|\nabla c\|}\geq \sup_{x\in \dom g} \max_{i=1,\ldots, m}\|\nabla c_i(x)\|.$$

The main conceptual premise here is that $m$ is large and should be treated as an explicit parameter of the problem. Moreover,
notice the Lipschitz data is stated for the individual functional components of the problem. Such finite-sum problems are ubiquitous in machine learning and data science, where $m$ is typically the (large) number of recorded measurements of the system. Notice that we have assumed that $c_i$ maps to the real line. This is purely for notational convenience. Completely analogous results, as in this section, hold when $c_i$ maps into a higher dimensional space.

Clearly, the finite-sum problem \eqref{eqn:sum_problem} is an instance of the composite problem class \eqref{eqn:comp2} under the identification
\begin{equation}\label{eqn:h_c}
	h(z_i,\ldots,z_m):=\frac{1}{m}\sum_{i=1}^m h_i(z_i)\qquad \textrm{and}\qquad c(x):= (c_1(x),\ldots,c_m(x)).
\end{equation}
Therefore, given a target accuracy $\varepsilon>0$, we again seek to find a point $x$ with a small prox-gradient $\|\mathcal{G}_t(x)\|\leq \varepsilon$. In contrast to the previous sections, by a {\em basic operation} we will mean individual evaluations of $c_i(x)$ and $\nabla c_i(x)$, dot-products $\nabla c_i(x)^Tv$, and proximal operations  $\prox_{th_i}$ and $\prox_{tg}$. 

Let us next establish baseline efficiency estimates by simply using the inexact prox-linear schemes discussed in Sections~\ref{sec:total_cost_smooth} and \ref{sec:tot_cost_smoothing_strat}. To this end, the following lemma derives Lipschitz constants of $h$ and $\nabla c$ from the problem data $ L$ and ${\beta}$. The proof is elementary and we have placed it in Appendix~\ref{sec:proofs_append}.
Henceforth, we set $\lip(\nabla c):=\sup_{x\neq y} \frac{\|\nabla c(x)-\nabla c(y)\|_{\textrm{op}}}{\|x-y\|}$. 

\begin{lemma}[Norm comparison]\label{lem:norm_comp}
	The inequalities hold:
	\begin{align*}
		&\lip(h)\leq {L}/\sqrt{m},  \qquad\lip(\nabla c)\leq {\beta}\sqrt{m},\qquad \|\nabla c(x)\|_{\textrm{op}}\leq \sqrt{m}\left(\max_{i=1,\ldots,m} \|\nabla c_i(x)\|\right)~~\forall x.
	\end{align*}	
	If in addition each $h_i$ is $C^1$-smooth with ${L_h}$-Lipschitz derivative $t\mapsto h_i'(t)$, then the inequality, $\lip(\nabla h)\leq{L_h}/m$, holds as well. 
\end{lemma}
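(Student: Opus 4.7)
The proof is a routine computation; the plan is to handle each of the four estimates in turn using Cauchy–Schwarz and the Frobenius upper bound on the operator norm.

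For the Lipschitz constant of $h$, I would write
\[
|h(z)-h(y)| \le \frac{1}{m}\sum_{i=1}^m |h_i(z_i)-h_i(y_i)| \le \frac{L}{m}\sum_{i=1}^m |z_i-y_i| \le \frac{L}{m}\sqrt{m}\,\|z-y\|,
\]
where the last step is Cauchy–Schwarz against the all-ones vector. This yields $\lip(h)\le L/\sqrt{m}$.

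For the two estimates involving the Jacobian $\nabla c(x)$, the key observation is that $\nabla c(x)$ is the $m\times d$ matrix whose $i$-th row is $\nabla c_i(x)^T$, so the squared Frobenius norm is exactly $\sum_{i=1}^m \|\nabla c_i(x)\|^2$. Since the operator norm is dominated by the Frobenius norm, I would bound
\[
\|\nabla c(x) - \nabla c(y)\|_{\textrm{op}}^2 \le \sum_{i=1}^m \|\nabla c_i(x)-\nabla c_i(y)\|^2 \le m\beta^2\|x-y\|^2,
\]
giving $\lip(\nabla c)\le \beta\sqrt{m}$. The same Frobenius bound applied to $\nabla c(x)$ itself yields $\|\nabla c(x)\|_{\textrm{op}}^2 \le m\cdot(\max_i\|\nabla c_i(x)\|)^2$, which is the third claim.

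For the last statement, note that under the $C^1$ assumption $\nabla h(z) = \frac{1}{m}(h_1'(z_1),\ldots,h_m'(z_m))$, so
\[
\|\nabla h(z)-\nabla h(y)\|^2 = \frac{1}{m^2}\sum_{i=1}^m |h_i'(z_i)-h_i'(y_i)|^2 \le \frac{L_h^2}{m^2}\sum_{i=1}^m |z_i-y_i|^2 = \frac{L_h^2}{m^2}\|z-y\|^2,
\]
which gives $\lip(\nabla h)\le L_h/m$. There is no genuine obstacle here — every inequality is a one-line application of Cauchy–Schwarz or the bound $\|\cdot\|_{\textrm{op}}\le\|\cdot\|_F$; the only thing to be careful about is keeping track of the factor $1/m$ coming from the average in the definition of $h$ versus the sum of squared norms coming from concatenation in $c$, which is exactly what produces the asymmetric scaling $1/\sqrt{m}$ for $h$ and $\sqrt{m}$ for $\nabla c$.
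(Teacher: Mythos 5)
Your proposal is correct and follows essentially the same route as the paper's proof: the bound $\|\cdot\|_1\le\sqrt{m}\|\cdot\|_2$ for $\lip(h)$, the Frobenius-norm domination of the operator norm for the two Jacobian estimates, and the coordinatewise computation for $\lip(\nabla h)$. The bookkeeping of the $1/m$ from the average versus the concatenation in $c$ is handled exactly as in the paper.
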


\begin{remark}[Notational substitution]\label{rem:not_caut}
	{\rm
		We will now apply the results of the previous sections to the finite sum problem \eqref{eqn:sum_problem} with $h$ and $c$ defined in \eqref{eqn:h_c}. In order to correctly interpret results from the previous sections, according to Lemma~\ref{lem:norm_comp}, we must be mindful to replace $L$ with $L/\sqrt{m}$, $\beta$ with $\beta\sqrt{m}$, $\|\nabla c\|$ with $\sqrt{m}\|\nabla c\|$, and $L_h$ with ${L_h}/m$. In particular, observe that we are justified in setting $\mu:=L\beta$ without any ambiguity. Henceforth, we will be using this substitution routinely.}
\end{remark}

%
%
%

\subsubsection*{Baseline efficiency when $h_i$ are smooth:}
Let us first suppose  that $h_i$ are $C^1$-smooth with ${L_h}$-Lipschitz derivative and interpret the efficiency estimate \eqref{eqn:boxed_eff_est_smooth_sing}. 
%
Notice  that each gradient evaluation $\nabla c$ requires $m$ individual gradient evaluations $\nabla c_i$. Thus multiplying \eqref{eqn:boxed_eff_est_smooth_sing} by $m$ and using Remark~\ref{rem:not_caut}, the efficiency estimate \eqref{eqn:boxed_eff_est_smooth_sing} reads: 
\begin{equation}\label{eqn:boxed_eff_est_smooth_sing_fin_sum}
	\boxed{\mathcal{O}\left(  \frac{ m\sqrt{{\|\nabla c\|}^2\cdot {L_h}\cdot {L}\cdot{\beta}}\cdot\left (F(x_0)- \inf F \right ) }{\varepsilon^2}~\log\left(\frac{{\|\nabla c\|}^2{L_h}}{{L}{\beta}}\right)\right)}
\end{equation}
 basic operations.

\subsubsection*{Baseline efficiency when $h_i$ are nonsmooth:}
Now let us apply the smoothing technique described in Section~\ref{sec:tot_cost_smoothing_strat}. Multiplying the efficiency estimate \eqref{eqn:final_cost2} by $m$ and using Remark~\ref{rem:not_caut} yields:
\begin{equation}\label{eqn:fin_sum_base_nonsmooth}
	\boxed{\mathcal{O}\left(  \frac{ m\cdot{L}^2{\beta}{\|\nabla c\|}\cdot\left (F(x_0)- \inf F \right ) }{\varepsilon^3}~\log\left(\frac{{\|\nabla c\|} {L}}{\varepsilon}\right)\right)}
\end{equation}
basic operations.

\bigskip

The two displays \eqref{eqn:boxed_eff_est_smooth_sing_fin_sum} and \eqref{eqn:fin_sum_base_nonsmooth} serve as baseline efficiency estimates for obtaining a point $x$ satisfying $\|\mathcal{G}_{1/{\mu}}(x) \|\leq \varepsilon$.
We will now see that one can improve these guarantees in expectation. The strategy is perfectly in line with the theme of the paper. We will replace $h$ by a smooth approximation, then apply an inexact prox-linear Algorithm \ref{alg: inex_prox_lin_primal}, while approximately solving each subproblem by an ``(accelerated) incremental method''. Thus the only novelty here is a different scheme for approximately solving the proximal subproblems.

\subsection{An interlude: incremental algorithms}

There are a number of popular algorithms for finite-sum problems, including SAG \cite{sag}, SAGA \cite{SAGA2}, SDCA \cite{sdca}, SVRG \cite{svrg,prox_SVRG}, FINITO \cite{finito}, and MISO \cite{miso}. All of these methods have similar linear rates of convergence, and differ only in storage requirements and in whether one needs to know explicitly the strong convexity constant. For the sake of concreteness, we will focus on SVRG following \cite{prox_SVRG}. This scheme applies to finite-sum problems
\begin{equation}\label{eqn:fin_sum_prob_conv_comp}
	\min_x~ f^p(x):= \frac{1}{m}\sum_{i=1}^m f_i(x)+p(x),
\end{equation}
where $p$ is a closed, $\alpha$-strongly convex function ($\alpha> 0$) and each $f_i$ is convex and $C^1$-smooth with $\ell$-Lipschitz gradient $\nabla f_i$. For notational convenience, define the condition number $\kappa:=l/\alpha$. Observe that when each $h_i$ is smooth, each proximal subproblem indeed has this form: 
\begin{equation}\label{eqn:prox_subprob_fin_sum}
	\min_{z} ~ F_t(z;x):=\frac{1}{m}\sum_{i=1}^m h_i\Big(c_i(x)+\langle\nabla c_i(x),z-x\rangle\Big)+g(z)+\frac{1}{2t}\|z-x\|^2.
\end{equation}

In Algorithm~\ref{alg:prox_SVRG}, we record the Prox-SVRG method of \cite{prox_SVRG} for minimizing the function \eqref{eqn:fin_sum_prob_conv_comp}.

{\LinesNotNumbered
	\begin{algorithm}[h!]
		\SetKw{Null}{NULL}
		\SetKw{Return}{return}
		\Initialize{A point $\widetilde{x}_0\in\R^d$, a real $\eta>0$, a positive integer $J$.
		}
		{\bf Step s:} ($s\geq 1)$  \\
		$\widetilde x=\widetilde{x}_{s-1}$;\\
		$\widetilde v= \frac{1}{m}\sum_{i=1}^m \nabla f_i(\widetilde{x})$;\\
		$x_0=\widetilde{x}$\\
		\For{$j=1,2,\ldots, J$}
		{pick $i_j\in \{1,\ldots,m\}$ uniformly at random\\
			$v_j=\widetilde v+(\nabla f_{i_j}(x_{j-1})-\nabla f_{i_j}(\widetilde{x}))$\\
			$x_j=\prox_{\eta p}(x_{j-1}-\eta v_j)$
		}
		$\widetilde{x}_s=\frac{1}{J}\sum_{j=1}^J x_j$

		\caption{The Prox-SVRG method \cite{prox_SVRG}}\label{alg:prox_SVRG}
\end{algorithm}}

The following theorem from \cite[Theorem 3.1]{prox_SVRG} summarizes convergence guarantees of Prox-SVRG.
\begin{theorem}[Convergence rate of Prox-SVRG]
	Algorithm~\ref{alg:prox_SVRG},  with the choices $\eta=\frac{1}{10\ell}$ and $J=\ceil*{100\kappa}$, will generate a sequence $\{\widetilde{x}_s\}_{s\geq 1}$ satisfying
	$$\mathbb{E}[f^p(\widetilde{x}_s)-f^p(x^*)]\leq 0.9^s(f^p(\widetilde{x}_0)-f^p(x^*)),$$
	where $x^*$ is the minimizer of $f^p$. Moreover, each step $s$ requires $m+2\ceil*{100\kappa}$ individual gradient $\nabla f_i$ evaluations. 
\end{theorem}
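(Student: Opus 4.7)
The plan is to reproduce the standard variance-reduction argument of \cite{prox_SVRG}. The key object of analysis is the search direction $v_j=\widetilde v+\nabla f_{i_j}(x_{j-1})-\nabla f_{i_j}(\widetilde x)$. First I would verify unbiasedness, $\mathbb{E}_{i_j}[v_j\mid x_{j-1}]=\nabla f(x_{j-1})$, which is immediate from the definition of $\widetilde v$ and uniform sampling of $i_j$. The conceptual heart of the proof is then a \emph{variance bound} of the form
\[
\mathbb{E}_{i_j}\left[\|v_j-\nabla f(x_{j-1})\|^2\right]\;\le\; 4\ell\bigl(f(x_{j-1})-f(x^*)-\langle\nabla f(x^*),x_{j-1}-x^*\rangle\bigr)+4\ell\bigl(f(\widetilde x)-f(x^*)-\langle\nabla f(x^*),\widetilde x-x^*\rangle\bigr).
\]
This is obtained by applying the standard co-coercivity inequality $\|\nabla f_i(y)-\nabla f_i(x^*)\|^2\le 2\ell\bigl(f_i(y)-f_i(x^*)-\langle\nabla f_i(x^*),y-x^*\rangle\bigr)$ to both $y=x_{j-1}$ and $y=\widetilde x$, averaging over $i$, and using the inequality $\|a+b\|^2\le 2\|a\|^2+2\|b\|^2$ with $a=\nabla f_{i_j}(x_{j-1})-\nabla f_{i_j}(x^*)$ and $b=\nabla f_{i_j}(x^*)-\nabla f_{i_j}(\widetilde x)$.

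Next I would run the textbook analysis of a single proximal step with an inexact gradient. Using the optimality condition for $x_j=\prox_{\eta p}(x_{j-1}-\eta v_j)$ together with $\alpha$-strong convexity of $p$ and convexity of $f$, and then taking conditional expectations and substituting the variance bound above, one obtains a one-step inequality of the form
\[
\mathbb{E}\|x_j-x^*\|^2+2\eta(1-4\eta\ell)\,\mathbb{E}\bigl[f^p(x_j)-f^p(x^*)\bigr]\;\le\;(1-\eta\alpha)\,\mathbb{E}\|x_{j-1}-x^*\|^2+8\eta^2\ell\,\mathbb{E}\bigl[f^p(\widetilde x)-f^p(x^*)\bigr].
\]
A minor subtlety here is that the cross terms $\langle\nabla f(x^*),\cdot\rangle$ appearing in the variance bound must be absorbed via the optimality of $x^*$ for $f^p$ (so that $-\nabla f(x^*)\in\partial p(x^*)$), converting them into functional gaps; this is really the main technical obstacle.

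Telescoping this recursion over $j=1,\ldots,J$, using Jensen's inequality applied to the convex function $f^p$ and the averaged iterate $\widetilde x_s=\tfrac1J\sum_{j=1}^J x_j$, and discarding the nonnegative distance term on the left, I obtain
\[
\mathbb{E}\bigl[f^p(\widetilde x_s)-f^p(x^*)\bigr]\;\le\;\left(\frac{1}{\alpha\eta(1-4\eta\ell)J}+\frac{4\eta\ell}{1-4\eta\ell}\right)\mathbb{E}\bigl[f^p(\widetilde x_{s-1})-f^p(x^*)\bigr],
\]
since $\widetilde x=\widetilde x_{s-1}$ at the start of stage $s$. Plugging in $\eta=\tfrac{1}{10\ell}$ gives $1-4\eta\ell=\tfrac35$ and $4\eta\ell/(1-4\eta\ell)=\tfrac23$; then $J\ge 100\kappa=100\,\ell/\alpha$ forces the first term to be at most $\tfrac{5}{3\cdot 100}\le 0.017$, so the overall contraction factor is bounded by $\tfrac23+\tfrac{1}{60}<0.9$. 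Iterating across $s$ stages yields the claimed geometric rate $0.9^s$.

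Finally, the gradient-complexity accounting is immediate from inspecting Algorithm~\ref{alg:prox_SVRG}: computing the snapshot $\widetilde v$ at the start of each stage costs $m$ individual evaluations $\nabla f_i(\widetilde x)$, and each of the $J=\lceil 100\kappa\rceil$ inner iterations requires exactly two additional evaluations, $\nabla f_{i_j}(x_{j-1})$ and $\nabla f_{i_j}(\widetilde x)$, for a total of $m+2\lceil 100\kappa\rceil$ per stage.
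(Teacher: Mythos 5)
The paper does not prove this theorem at all: it is imported verbatim from \cite[Theorem 3.1]{prox_SVRG} (Xiao--Zhang), so there is no in-paper argument to compare against. Your reconstruction is, in outline, exactly the argument of that reference --- unbiasedness of $v_j$, the variance bound via co-coercivity of each $\nabla f_i$ at $x^*$, conversion of the Bregman-type gaps into $f^p$-gaps using $-\nabla f(x^*)\in\partial p(x^*)$, a one-step proximal inequality, telescoping over the epoch, and Jensen on the averaged iterate --- and the final contraction factor you write down matches Xiao--Zhang's $\rho$ up to an immaterial $(J+1)/J$ factor. Two small points deserve correction. First, your one-step inequality is not literally what the analysis produces: the variance bound contributes $8\eta^2\ell$ times \emph{both} gaps, $f^p(x_{j-1})-f^p(x^*)$ and $f^p(\widetilde x)-f^p(x^*)$, and the coefficient $2\eta(1-4\eta\ell)$ on the current gap only emerges after summing over $j=1,\dots,J$ and re-indexing the $x_{j-1}$ terms against the $x_j$ terms (this is also what generates the harmless $(J+1)/J$ boundary factor); as stated, with the $\text{gap}(x_{j-1})$ term silently dropped, the displayed recursion does not follow from the variance bound. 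Second, the numerical evaluation of the first term is off by a factor of ten: with $\eta=\tfrac{1}{10\ell}$ and $J\ge 100\kappa$ one has $\alpha\eta J\ge 10$, so $\tfrac{1}{\alpha\eta(1-4\eta\ell)J}\le\tfrac{5}{3}\cdot\tfrac{1}{10}=\tfrac16$, not $\tfrac{1}{60}$. The conclusion survives, since $\tfrac23+\tfrac16=\tfrac56<0.9$ (and even with the $(J+1)/J$ correction the factor stays below $0.9$), but the arithmetic as written is wrong. The operation count at the end is correct as stated for the algorithm as printed.
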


Thus Prox-SVRG will generate a point $x$ with $\mathbb{E}[f^p(x)-f^p(x^*)]\leq \varepsilon$ after at most 
\begin{equation}\label{eqn:prox_svrg_guarantee}
	\mathcal{O}\left(\left(m+\kappa\right)~\log\left(\frac{f^p(\widetilde{x}_0)-f^p(x^*)}{\varepsilon}\right)\right)
\end{equation}
individual gradient $\nabla f_i$ evaluations. It was a long-standing open question whether there is a method that improves the dependence of this estimate on the condition number $\kappa$. This question was answered positively by a number of algorithms, including Catalyst \cite{catalyst}, accelerated SDCA \cite{accsdca}, APPA \cite{frostig}, RPDG \cite{conjugategradient}, and Katyusha \cite{accsvrg}. For the sake of concreteness, we focus only on one of these methods, Katyusha \cite{accsvrg}. This scheme follows the same epoch structure as SVRG, while incorporating iterate history. We summarize convergence guarantees of this method, established in \cite[Theorem 3.1]{accsvrg}, in  the following theorem.

\begin{theorem}[Convergence rate of Katyusha]\label{thm:kat_rate}
	The Katyusha algorithm of \cite{accsvrg} generates a sequence of iterates $\{\widetilde{x}_s\}_{s\geq 1}$ satisfying
	$$
	\frac{\mathbb{E}[f^p(\widetilde{x}_s)-f^p(x^*)]}{f^p(\widetilde{x}_0)-f^p(x^*)}\leq
	\left\{
	\begin{aligned}
	&4\cdot\left(1+\sqrt{1/(6\kappa m)}\right)^{-2sm}&,\qquad \textrm{if } \tfrac{m}{\kappa}\leq \tfrac{3}{8}\\
	&3\cdot(1.5)^{-s}&,\qquad \textrm{if } \tfrac{m}{\kappa}> \tfrac{3}{8}
	\end{aligned}
	\right.
	$$
	where $x^*$ is the minimizer of $f^p$. Moreover, each step $s$ requires $3m$ individual gradient $\nabla f_i$ evaluations.\footnote{The constants $4$ and $3$ are hidden in the $\mathcal{O}$ notation in \cite[Theorem 3.1]{accsvrg}. They can be explicitly verified by following along the proof.}
\end{theorem}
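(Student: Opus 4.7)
The plan is to follow the \emph{estimate sequence / Lyapunov function} paradigm used for accelerated gradient methods, combined with the SVRG variance-reduction analysis. Writing $x_k, y_k, z_k$ for the three interleaved sequences maintained by Katyusha (gradient, momentum, and ``negative momentum'' iterates), I would introduce a potential of the form
\begin{equation*}
\Phi_k := (f^p(y_k)-f^p(x^*)) + \tfrac{\sigma}{2}\|z_k-x^*\|^2
\end{equation*}
with $\sigma$ chosen proportional to the strong convexity parameter $\alpha$, and attempt to show a per-iteration contraction $\mathbb{E}[\Phi_{k+1}] \le (1-\theta)\Phi_k$ for an appropriate $\theta$, where the expectation is over the random index $i_j$.

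The first step would be to record the standard SVRG variance bound: conditional on $x_{j-1}$ and on the snapshot $\widetilde x$, the stochastic gradient $v_j$ satisfies
\begin{equation*}
\mathbb{E}\|v_j - \nabla f(x_{j-1})\|^2 \le 2\ell\bigl(f(x_{j-1})-f(\widetilde x) - \langle \nabla f(\widetilde x), x_{j-1}-\widetilde x\rangle\bigr),
\end{equation*}
which follows from $\ell$-smoothness and the standard ``cocoercivity'' identity applied componentwise to the $f_i$. The critical feature is that the variance is controlled by a Bregman-like quantity involving $\widetilde x$, which is exactly what allows it to be absorbed into the progress inequality.

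Next I would write the three-point update as a proximal mirror-descent step at the gradient point $x_k$ with stochastic gradient $v_j$, and apply the standard prox inequality to relate $\langle v_j, z_{j+1}-u\rangle + p(z_{j+1})$ to differences of the squared distances $\|z_j-u\|^2-\|z_{j+1}-u\|^2$, using strong convexity of $p$. Plugging in $u=x^*$, combining with the $\ell$-smoothness upper bound on $f(y_{j+1})$, taking the convex combination dictated by Katyusha's coefficients $(\tau_1,\tau_2)$, and taking conditional expectation produces (after the variance term is swallowed by $f(\widetilde x) - f(x^*)$ via the identity above) a one-step descent of the form
\begin{equation*}
\mathbb{E}[\Phi_{j+1}] + A_j \cdot \mathbb{E}[f^p(y_{j+1})-f^p(x^*)] \le (1-\tau_1)\Phi_j + B_j\cdot(f^p(\widetilde x) - f^p(x^*)),
\end{equation*}
with explicit constants $A_j, B_j$. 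Telescoping over one epoch of length $m$ and using that $\widetilde x_{s+1}$ is the average of the $y_j$, the snapshot residual can be controlled by the epoch's accumulated progress, producing the per-epoch contraction factor $(1+\sqrt{1/(6\kappa m)})^{-2m}$ in the regime $m/\kappa \le 3/8$; in the opposite regime the same inequality degenerates to the cruder non-accelerated rate $(1.5)^{-1}$ per epoch, matching the two cases of the statement.

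The main obstacle is tuning the parameters $\tau_1,\tau_2,\eta$ precisely enough that the coefficient of the $f(\widetilde x) - f(x^*)$ term emerging from the variance bound exactly matches the coefficient available from the gap between consecutive snapshots; too loose a choice produces an $\mathcal{O}(\sqrt{\kappa} + m)$ rather than $\mathcal{O}(\sqrt{\kappa m})$ complexity. The remaining cost accounting — that each epoch uses $2m$ stochastic gradients plus the $m$ needed to form $\widetilde v$, hence $3m$ evaluations — is then immediate.
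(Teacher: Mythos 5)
The first thing to say is that the paper does not actually prove this theorem: it is quoted from \cite[Theorem 3.1]{accsvrg}, and the only content the authors add is the footnoted remark that the constants $4$ and $3$ and the explicit two-regime form of the rate can be read off by following Allen-Zhu's proof. So there is no in-paper argument to compare your sketch against; the question is whether your outline would reconstitute the cited proof, and as written it would not quite. Your architecture (a Lyapunov coupling of a gradient-descent sequence and a mirror-descent sequence, SVRG variance reduction, per-epoch telescoping) is the right one, but the step where Katyusha actually differs from plain accelerated SVRG is glossed over, and your version of it is written in a way that would break. The variance bound you record is $\mathbb{E}\|v_j-\nabla f(x_{j-1})\|^2\le 2\ell\,D_f(x_{j-1};\widetilde x)$, where $D_f(y;x):=f(y)-f(x)-\langle\nabla f(x),y-x\rangle$; the bound the analysis needs is the one with the arguments in the other order, $2\ell\,D_f(\widetilde x;x_{k+1})$, evaluated at the query point $x_{k+1}=\tau_1 z_k+\tau_2\widetilde x+(1-\tau_1-\tau_2)y_k$. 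Both inequalities are true (co-coercivity can be applied in either direction), but only the second couples with the algorithm: the slack in the convexity estimate $\langle\nabla f(x_{k+1}),\widetilde x-x_{k+1}\rangle\le f(\widetilde x)-f(x_{k+1})$ is exactly $-D_f(\widetilde x;x_{k+1})$, and the negative-momentum weight $\tau_2$ on $\widetilde x$ in the query point is what makes this negative term available to cancel the variance. With your orientation of the Bregman divergence there is no matching negative term anywhere in the one-step inequality, and the telescoping degrades to the non-accelerated rate.

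Relatedly, you name the tuning of $(\tau_1,\tau_2,\eta)$ as ``the main obstacle'' but do not resolve it, and it is precisely this tuning (in \cite{accsvrg}, $\tau_2=\tfrac{1}{2}$ with $\tau_1$ of order $\min\{\sqrt{m/\kappa},\tfrac{1}{2}\}$ and step size $\sim 1/(\tau_1\ell)$) that produces the case split between $m/\kappa\le\tfrac{3}{8}$ and $m/\kappa>\tfrac{3}{8}$ and the explicit constants $4$ and $3$ in the statement. Since pinning down those constants is the only thing the present theorem adds beyond the citation, a proof attempt for this particular statement has to carry that computation through; stopping at ``the coefficients can be tuned'' establishes at best the $\mathcal{O}(\cdot)$ form of the rate, which is what \cite{accsvrg} already asserts. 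The cost accounting ($2m$ stochastic gradients per epoch plus $m$ to form the snapshot gradient, hence $3m$) is correct.
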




To simplify the expression for the rate, using the inequality $(1+z)^m\geq 1+mz$ observe 
$${\left(1+\sqrt{\tfrac{1}{6\kappa m}}\right)^{-2sm}}\leq {\left(1+\sqrt{\tfrac{2m}{3\kappa}}\right)^{-s}}.$$
Using this estimate in Theorem~\ref{thm:kat_rate} simplifies the linear rate to
$$\frac{\mathbb{E}[f^p(\widetilde{x}_s)-f^p(x^*)]}{f^p(\widetilde{x}_0)-f^p(x^*)}\leq 4\cdot \max\left\{ \left(1+\sqrt{\tfrac{2m}{3\kappa}}\right)^{-s}, 1.5^{-s}\right\}.$$
Recall that  each iteration of Katyusha requires $3m$ individual gradient $\nabla f_i$ evaluations. Thus the method will
generate a point $x$ with $\mathbb{E}[f^p(x)-f^p(x^*)]\leq \varepsilon$ after at most 
$$\mathcal{O}\left(\left(m+\sqrt{m\kappa}\,\right)~\log\left(\frac{f^p(\widetilde{x}_0)-f^p(x^*)}{\varepsilon}\right)\right)$$ 
individual gradient $\nabla f_i$ evaluations. Notice this efficiency estimate is  significantly better than the  guarantee \eqref{eqn:prox_svrg_guarantee} for Prox-SVRG only when $m\ll \kappa$. This setting is very meaningful in the context of smoothing. Indeed, since we will be applying accelerated incremental methods to proximal subproblems after a smoothing, the condition number $\kappa$ of each subproblem can be huge.

\subsubsection*{Improved efficiency estimates when $h_i$ are smooth:}
Let us now suppose that each $h_i$ is $C^1$-smooth with ${L_h}$-Lipschitz derivative $h_i'$. We seek to determine the efficiency of the inexact prox-linear method (Algorithm~\ref{alg: inex_prox_lin_primal}) that uses either Prox-SVRG or Katyusha as the linearly convergent subscheme $\mathcal{M}$. Let us therefore first look at the efficiency of Prox-SVRG and Katyusha on the prox-linear subproblem \eqref{eqn:prox_subprob_fin_sum}. Clearly we can set 
$$\ell:={L_h}\cdot\left(\max_{i=1,\ldots,m}\|\nabla c_i(x)\|^2\right)\qquad \textrm{ and }\qquad \alpha=t^{-1}.$$
Notice that  the convergence guarantees for Prox-SVRG and Katyusha are not in the standard form \eqref{eqn:main_lin_con_ineq}. Lemma~\ref{lem:put_stand_form}, however, shows that they can be put into standard form by taking a single extra prox-gradient step in the very beginning of each scheme; we'll call these slightly modified schemes Prox-SVRG$^+$ and Katyusha$^+$.
Taking into account Lemma~\ref{lem:norm_comp}, observe that the gradient of the function $z\mapsto h(c(x)+\nabla c(x)(z-x))$ is $l$-Lipschitz continuous.
Thus according to Lemma~\ref{lem:put_stand_form}, Prox-SVRG$^+$ and Katyusha$^+$ on input $\widetilde{z}_0$ satisfy 
\begin{align*}
	\mathbb{E}[F_t(\widetilde{z}_s;x) - F_t(z^*;x)] &\le \frac{\ell}{2} \cdot  0.9^s\|\widetilde{z}_0-z^*\|^2,\\
	\mathbb{E}[F_t(\widetilde{z}_s;x) - F_t(z^*;x)] &\le \frac{4 \ell}{2} \cdot\max\left\{ \left(1+\sqrt{\tfrac{2m}{3\kappa}}\right)^{-s}, 1.5^{-s}\right\}\cdot\|\widetilde{z}_0-z^*\|^2 ,
\end{align*}
for  $s=1,\ldots,\infty$,
respectively, where $z^*$ is the minimizer of $F_t(\cdot;x)$.

We are now ready to compute the total efficiency guarantees.
Setting $t=1/\mu$, Theorem~\ref{thm:conv_couple} shows that Algorithm~\ref{alg: inex_prox_lin_primal} will generate a point $x$ with 
$$\mathbb{E}\left[\|\mathcal{G}_{1/\mu}(x)\|^2\right]\leq \varepsilon^2$$ after at most
$\ceil*{\frac{4\mu(F(x_0)-\inf F)}{\varepsilon^2}}$ iterations. Each iteration $k$ in turn requires at most
$$\ceil*{\frac{1}{\tau_k}\log(4t\gamma_k)}\leq\ceil[\bigg]{\frac{1}{0.1}\log\Big(4\cdot\frac{1}{\mu}\cdot\frac{{L_h}\cdot{\|\nabla c\|}^2}{2}\Big)}$$
iterations of Prox-SVRG$^+$ and at most
$$\ceil*{\frac{1}{\tau_k}\log(4t\gamma_k)}\leq\ceil[\bigg]{\max\left\{3,\left(1+\sqrt{\tfrac{3{L_h}{\|\nabla c\|}^2}{2m\mu}}\right)\right\}\log\Big(4\cdot\frac{1}{\mu}\cdot\frac{4\cdot{L_h}\cdot{\|\nabla c\|}^2}{2}\Big)}$$
iterations of Katyusha$^+$. Finally recall that each iteration $s$ of Prox-SVRG$^+$ and of Katyusha$^+$, respectively, requires $m+2\ceil*{\frac{100{L_h}{\|\nabla c\|}^2}{\mu}}$ and $3m$ evaluations of $\nabla c_i(x)^Tv$. Hence the overall efficiency  is on the order of 
\begin{equation}\label{eqn:prox-svrg_smooth_total}
	\boxed{\mathcal{O}\left(\frac{\left(\mu m+{L_h}{\|\nabla c\|}^2\right)\cdot(F(x_0)-\inf F)}{\varepsilon^2}~\log\left(\frac{{L_h}\cdot{\|\nabla c\|}^2}{\mu}\right)\right)}
\end{equation}
when using Prox-SVRG$^+$ and on the order of 
\begin{equation}\label{eqn:kat_smooth_total}
	\boxed{\mathcal{O}\left(\frac{\left(\mu m+\sqrt{{\mu m{L_h}{\|\nabla c\|}^2}}\right)\cdot(F(x_0)-\inf F)}{\varepsilon^2}~\log\left(\frac{{L_h}\cdot{\|\nabla c\|}^2}{\mu}\right)\right)}
\end{equation}
when using Katyusha$^+$.  Notice that the estimate \eqref{eqn:kat_smooth_total} is better than \eqref{eqn:prox-svrg_smooth_total} precisely when $m\ll\frac{{L_h}{\|\nabla c\|}^2}{\mu}$. 

\subsubsection*{Improved efficiency estimates when $h_i$ are nonsmooth:}

Finally, let us now no longer suppose that $h_i$ are smooth in the finite-sum problem \eqref{eqn:sum_problem} and instead apply the smoothing technique. To this end, observe the equality
$$h_{\nu}(z)=\inf_{y} \left\{\frac{1}{m}\sum_{i=1}^m h_i(y_i)+\frac{1}{2\nu}\|y-z\|^2\right\}=\sum_{i=1}^m ({h_i/m})_{\nu}(z_i).$$
Therefore the smoothed problem in \eqref{eqn:smoothed_prob} is also a finite-sum problem with
$$\min_{x} ~ \frac{1}{m}\sum_{i=1}^m m\cdot(h_i/m)_{\nu}(c_i(x))+g(x).$$
Thus we can can apply the convergence estimates we have just derived in the smooth setting with $h_i(t)$ replaced by  $\phi_i(t):= m\cdot(h_i/m)_{\nu}(t)$. Observe that $\phi_i$ is ${L}$-Lipschitz by Lemma~\ref{lem:lip_cont}, while the derivative  $\phi_i'(t)=m\cdot \nu^{-1}(t-\prox_{\frac{\nu}{m} h_i}(t))$ is Lipschitz with constant ${L_h}:=\frac{m}{\nu}$. Thus according to the recipe following Theorem~\ref{thm:smooth_est}, given a target accuracy $\varepsilon>0$ for the norm of the prox-gradient $\|\mathcal{G}_{\frac{1}{\mu}}(x)\|$, we should set 
$$\nu:=\frac{m\varepsilon^2}{2{L}^3{\beta}},$$
where we have used the substitutions dictated by Remark~\ref{rem:not_caut}.
Then Theorem~\ref{thm:smooth_est} implies
$$\left\|\mathcal{G}_{1/\mu}(x)\right\|\leq \left\|\mathcal{G}^{\nu}_{1/\mu}(x)\right\|+\frac{\varepsilon}{2} \qquad \textrm{ for all }x,$$
where $\left\|\mathcal{G}^{\nu}_{1/\mu}(x)\right\|$ is the prox-gradient for the smoothed problem. Squaring and taking expectations on both sides, we can be sure $\mathbb{E}[\left\|\mathcal{G}_{1/\mu}(x)\right\|^2]\leq \varepsilon^2$ if we find a point $x$ satisfying $\mathbb{E}\left[\left\|\mathcal{G}^{\nu}_{1/\mu}(x)\right\|^2\right]\leq \frac{\varepsilon^2}{4}$. 
Thus we must simply write the estimates \eqref{eqn:prox-svrg_smooth_total} and \eqref{eqn:kat_smooth_total} for the smoothed problem in terms of the original problem data.
Thus to obtain a point $x$
satisfying 
$$\mathbb{E}[\left\|\mathcal{G}_{1/\mu}(x)\right\|^2]\leq \varepsilon^2,$$
it suffices to perform 
\begin{equation}\label{eqn:kat_nonsmooth_total}
	\boxed{\mathcal{O}\left(\left(\frac{{L}{\beta} m}{\varepsilon^2}+\frac{{L}^2{\beta}{\|\nabla c\|}}{\varepsilon^3}\cdot\min\left\{\sqrt{m}, \frac{{L}{\|\nabla c\|}}{\varepsilon}\right\}\right)\cdot(F(x_0)-\inf F)~\log\left(\frac{{L}\cdot{\|\nabla c\|}}{\varepsilon}\right)\right)}
\end{equation}
basic operations. The $\min$ in the estimate corresponds to choosing the better of the two, Prox-SVRG$^+$ and Katyusha$^+$, in each proximal subproblem in terms of their efficiency estimates. Notice that the $1/\varepsilon^3$ term in  \eqref{eqn:kat_nonsmooth_total} scales only as $\sqrt{m}$. Therefore this estimate is an order of magnitude better than our baseline \eqref{eqn:fin_sum_base_nonsmooth}, which we were trying to improve.
The caveat is of course that the estimate \eqref{eqn:kat_nonsmooth_total} is in expectation while \eqref{eqn:fin_sum_base_nonsmooth} is deterministic.

\section{An accelerated prox-linear algorithm}\label{sec:accel_conv_comp}
Most of the paper thus far has focused on the setting when the proximal subproblems \eqref{alg: prox_lin} can only be approximately solved by first-order methods. On the other hand, in a variety of circumstances, it is reasonable to expect to solve the subproblems to a high accuracy by other means. For example, one may have available specialized methods for the proximal subproblems, or interior-point points methods may be available for moderate dimensions $d$ and $m$, or it may be that case that computing an accurate estimate of $\nabla c(x)$ may already be the bottleneck (see e.g. Example~\ref{ex:grey}). In this context, it is interesting to see if the basic prox-linear method can in some sense be ``accelerated'' by using inertial information. 
In this section, we do exactly that.

We propose an algorithm, motivated by the work of Ghadimi-Lan \cite{ghadimi_lan}, that is adaptive  to some natural constants measuring convexity of the composite function. This being said, the reader should keep in mind a downside the proposed scheme: our analysis (for the first time in the paper) requires the domain of $g$ to be bounded. 
Henceforth, define
$$\qquad\qquad M:=\sup_{x,y\in\dom g} \|x-y\|$$
and assume it to be finite.

To motivate the algorithm, let us first consider the additive composite setting \eqref{eqn:add_comp} with $c(\cdot)$ in addition convex. Algorithms in the style of Nesterov's second accelerated method (see \cite{nest_88} or \cite[Algorithm 1]{tseng_f_order}) incorporate steps of the form $v_{k+1}=\prox_{tg}\left(v_k-t\nabla c(y_k)\right)$. That is, one moves from a point $v_k$ in the direction of the negative gradient $-\nabla c(y_k)$ evaluated at a different point $y_k$, followed by a  proximal operation. Equivalently, after completing a square one can write 
$$v_{k+1}:=\argmin_{z}~\left\{ c(y_k)+ \langle \nabla c(y_k),z-v_k\rangle+\frac{1}{2t}\|z-v_k\|^2+g(z)\right\}.$$
This is also the construction used by Ghadimi and Lan \cite[Equation 2.37]{ghadimi_lan} for nonconvex additive composite problems.
The algorithm we consider emulates this operation. There is a slight complication, however, in that the composite structure requires us to incorporate an additional scaling parameter $\alpha$ in the construction. We use the following notation:
\begin{align*}
	F_{\alpha}(z; y, v) \,&:=\, g(z) + \frac{1}{\alpha} \cdot h \big (c(y) + \alpha \nabla c(y) (z-v) \big ) ,\\
	F_{t, \alpha}(z; y, v) &\,:=\, F_{\alpha}(z; y, v) + \frac{1}{2 t} \norm{z-v}^2,\\
	S_{t, \alpha} (y, v) &\,:=\, \argmin_z~ F_{t, \alpha}(z; y, v).
\end{align*}
Observe the equality $S_{t,1}(x,x)=S_t(x)$. In the additive composite setting, the mapping $S_{t, \alpha} (y, v)$ does not depend on $\alpha$ and the definition reduces to
\begin{align*}
	S_{t, \alpha} (y, v) &= \argmin_z \big \{   
	c(y)+\langle \nabla c(y),
	z-v \rangle + \frac{1}{2t} \norm{z-v}^2+ g(z)  \big \}= \prox_{tg}\left(v-t\nabla c(y)\right).
\end{align*}
The scheme we propose is summarized in Algorithm~\ref{alg: constant}.

{\LinesNotNumbered
	\begin{algorithm}[h!]
		\SetKw{Null}{NULL}
		\SetKw{Return}{return}
		\Initialize{Fix two points $x_0, v_0 \in \text{dom} \,
			g$ and a real number $\tilde{\mu} >  \mu $.}
		{\bf Step k:} ($k\geq 1)$ Compute
		\begin{align}
			a_k &= \tfrac{2}{k+1}\\
			y_k &= a_k v_{k-1} + (1-a_k) x_{k-1}\label{line:bef_mu}\\
			x_k &= S_{1/\tilde{\mu} }(y_k) \label{line: x}\\
			v_k &= S_{\tfrac{1}{ \tilde{\mu} a_k },\, a_k}(y_k, v_{k-1})\label{line: v}
		\end{align}
		%
		\caption{Accelerated prox-linear method}
		\label{alg: constant}
\end{algorithm}}

\begin{remark}[Interpolation weights]
	{\rm
		When $L$ and $\beta$ are unknown, one can instead equip Algorithm~\ref{alg: constant} with a backtracking line search. A formal description and the resulting convergence guarantees appear in  Appendix~\ref{sec:app_auxil}. 
		We also note that instead of setting $a_k=\frac{2}{k+1}$, one may use the interpolation weights used in FISTA~\cite{beck}; namely, the sequence $a_k$ may be chosen to satisfy the relation $\tfrac{1-a_k}{a_k^2} = \tfrac{1}{a_{k-1}^2}$, with similar convergence guarantees. 
		%
	}
\end{remark}

\subsection{Convergence guarantees and convexity moduli}
We will see momentarily that convergence guarantees of Algorithm~\ref{alg: constant} are adaptive to convexity (or lack thereof) of the composition $h\circ c$. To simplify notation, henceforth set $$\Phi:=h\circ c.$$ 

\subsubsection*{Weak convexity and convexity of the pair}
It appears that there are two different convexity-like properties of the composite problem that govern convergence of Algorithm~\ref{alg: constant}. The first is weak-convexity. Recall from Lemma~\ref{lem:prox_reg} that $\Phi$ is $\rho$-weakly convex for some $\rho\in [0,\mu]$. Thus there is some $\rho\in [0,\mu]$ such that for any points $x,y\in \R^d$  and $a\in[0,1]$, the approximate secant inequality holds: $$\Phi(ax+(1-a)y)\leq a \Phi(x)+(1-a)\Phi(y)+\rho a(1-a)\|x-y\|^2.$$

Weak convexity is a property of the composite function $h\circ c$ and is not directly related to $h$ nor $c$ individually.
In contrast, the algorithm we consider uses explicitly the composite structure. In particular, it seems that the extent to which the ``linearization'' $z\mapsto h(c(y)+\nabla c(y)(z-y))$ lower bounds $h(c(z))$ should also play a role. 
\begin{defn}[Convexity of the pair]
	{\rm
		A real number $r>0$ is called a {\em convexity constant of the pair $(h,c)$ on a set $U$} 
		if the inequality
		$$h\big(c(y)+\nabla c(y)(z-y)\big)\leq h(c(z))+\frac{r}{2}\|z-y\|^2\qquad \textrm{ holds for all }z,y\in U.$$ 
	}
\end{defn}

Inequalities \eqref{eqn:ineq_min_max} show that the pair $(h,c)$ indeed has a convexity constant $r\in [0,\mu]$ on $\R^d$.
The following relationship between convexity of the pair $(h,c)$ and weak convexity of $\Phi$ will be useful.
\begin{lemma}[Convexity of the pair implies weak convexity of the composition]\label{lem:con_const_comp} {\hfill \\ }
	If $r$ is a convexity constant of $(h,c)$ on a convex set $U$, then $\Phi$ is $r$-weakly convex on $U$.
\end{lemma}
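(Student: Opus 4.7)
My plan is to verify the secant form of weak convexity directly, by combining convexity of $h$ with the convexity-of-pair inequality. Fix $x, y \in U$ and $a \in [0,1]$, and set $w := ax + (1-a)y$, which lies in $U$ by convexity of the set. The key observation is that although $\Phi$ itself need not be convex, the linearized model $\ell(z) := h\bigl(c(w) + \nabla c(w)(z-w)\bigr)$ is convex in $z$, as the composition of the convex function $h$ with an affine map.

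I will then exploit the trivial identity $\ell(w) = h(c(w)) = \Phi(w)$ together with the barycentric relation $w = ax + (1-a)y$. By convexity of $\ell$ this gives
\[
\Phi(w) \;=\; \ell(w) \;\leq\; a\,\ell(x) + (1-a)\,\ell(y).
\]
Each term on the right is the linearized model of $\Phi$ at base point $w$ evaluated at $x$ or $y$, and this is precisely where the convexity-of-pair inequality enters: applied with base point $w$ and test point $x$ (respectively $y$), it yields $\ell(x) \leq \Phi(x) + \tfrac{r}{2}\|x-w\|^2$ and $\ell(y) \leq \Phi(y) + \tfrac{r}{2}\|y-w\|^2$. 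Substituting gives
\[
\Phi(w) \;\leq\; a\Phi(x) + (1-a)\Phi(y) + \tfrac{r}{2}\bigl[a\|x-w\|^2 + (1-a)\|y-w\|^2\bigr].
\]

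A short arithmetic computation using $x-w = (1-a)(x-y)$ and $y-w = a(y-x)$ reduces the bracketed term to $a(1-a)\|x-y\|^2$, so the bound becomes $\tfrac{r}{2}a(1-a)\|x-y\|^2$, which is a fortiori at most $r\,a(1-a)\|x-y\|^2$. This is the secant inequality defining $r$-weak convexity of $\Phi$ on $U$. There is no substantive obstacle in the argument; the only points that need care are that convexity of $U$ is used precisely to guarantee $w \in U$ (so the convexity-of-pair inequality is available at base point $w$), and that the two instantiations of that inequality must be taken with the same base point $w$ in order for the convex combination of linearizations to produce $\Phi(w)$ on the left.
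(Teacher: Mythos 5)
Your proof is correct, and it takes a genuinely different route from the paper's. The paper argues through the subgradient characterization of weak convexity: it observes that $\partial\Phi(y)$ coincides with the subdifferential of the linearization $h(c(y)+\nabla c(y)(\cdot-y))$ at the base point, uses the convexity-of-pair inequality (with base point $y$) to derive the quadratic lower-estimator $\Phi(x)\geq \Phi(y)+\langle v,x-y\rangle-\tfrac{r}{2}\|x-y\|^2$ for $v\in\partial\Phi(y)$, and then invokes the equivalence between this minorization property and weak convexity, citing an external reference for the remaining details. You instead verify the secant inequality directly: you apply the convexity-of-pair inequality with base point at the convex combination $w=ax+(1-a)y$ (which is where convexity of $U$ is genuinely needed), and exploit convexity of the linearized model $\ell$ together with $\ell(w)=\Phi(w)$. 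Your route is more elementary and fully self-contained --- no subdifferentials and no appeal to the equivalence theorem --- and it in fact delivers the secant inequality with the sharper coefficient $\tfrac{r}{2}a(1-a)\|x-y\|^2$, which you correctly note dominates the paper's definition requiring only $r\,a(1-a)\|x-y\|^2$. What the paper's approach buys in exchange is the quadratic lower-estimator itself, a form of the conclusion that is reused elsewhere in their analysis; your argument does not produce that subgradient inequality as a byproduct.
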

\begin{proof}
	Suppose $r$ is a convexity constant of $(h,c)$ on $U$. Observe that the subdifferential of the convex function $\Phi$ and that of the linearization $h\big(c(y)+\nabla c(y)(\cdot-y)\big)$ coincide at $y=x$.
	Therefore a quick argument shows that for any $x,y\in U$ and $v\in \partial \Phi(y)$ we have 
	\begin{align*}
		\Phi(x)&\geq h(c(y)+\nabla c(y)(x-y))-\frac{r}{2}\|x-y\|^2\geq \Phi(y)+\langle v,x-y\rangle-\frac{r}{2}\|x-y\|^2 .
	\end{align*}
	The rest of the proof follows along the same lines as  \cite[Theorem 3.1]{fill_grap}. We omit the details.
\end{proof}

\begin{remark}{\rm
		The converse of the lemma is false. Consider for example setting $c(x)=(x,x^2)$ and $h(x,z)=x^2-z$. Then the composition $h\circ c$ is identically zero and hence convex. On the other hand,  one can easily check that the pair $(h,c)$ has a nonzero convexity constant. }
\end{remark}

\subsubsection*{Convergence guarantees}
Henceforth, let $\rho$ be a weak convexity constant of $h\circ c$ on $\dom g$ and let $r$ be a convexity constant of $(h,c)$ on $\dom g$. According to Lemma~\ref{lem:con_const_comp}, we can always assume $0\leq \rho\leq r\leq \mu$.
We are now ready to state and prove convergence guarantees of Algorithm~\ref{alg: constant}.

\begin{theorem}[Convergence guarantees]
	Fix a real number $\tilde \mu>\mu$ and  let $x^*$ be any point satisfying $F(x^*)\leq F(x_k)$ for all iterates $x_k$ generated by Algorithm~\ref{alg: constant}. 
	Then the efficiency estimate holds:		
	\begin{align*}
		\min_{j = 1, \ldots, N} \norm{\mathcal{G}_{1/\tilde{\mu}}(y_j)}^2  \le \frac{24\tilde{\mu}^2}{\tilde{\mu}-\mu}\left(
		\frac{ \tilde{\mu} \norm{x^*-v_0}^2}{
			N (N + 1)(2N+1) } +\frac{M^2(r+\frac{\rho}{2}(N+3))}{(N+1)(2N+1)}\right).
	\end{align*}		
	In the case $r=0$, the inequality above holds with the second summand on the right-hand-side replaced by zero (even if $M=\infty$), and moreover the efficiency bound on function values holds:	
	\begin{align*}
		F(x_N)-F(x^*) &\le \frac{2 \tilde{\mu} \norm{x^*-v_0}^2 }{(N+1)^2}.
	\end{align*}
	\label{thm: cnx}
\end{theorem}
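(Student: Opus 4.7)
The plan is to derive a one-step recursive inequality relating consecutive iterates, then telescope with the weights dictated by $a_k = 2/(k+1)$. I introduce the auxiliary point $\bar x_k := a_k v_k + (1 - a_k) x_{k-1}$, which satisfies the identity $\bar x_k - y_k = a_k (v_k - v_{k-1})$ (since $y_k = a_k v_{k-1} + (1 - a_k) x_{k-1}$) and serves as a ``convex-combination proxy'' linking the three sequences. The first ingredient is strong convexity of the $\tilde\mu a_k$-strongly convex function $F_{1/(\tilde\mu a_k),\, a_k}(\cdot; y_k, v_{k-1})$ minimized by $v_k$: the standard three-point inequality evaluated at $z = x^*$ produces the telescoping pair $\|x^* - v_{k-1}\|^2 - \|x^* - v_k\|^2$. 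The algebraic trick is $a_k(x^* - v_{k-1}) = \bigl(a_k x^* + (1-a_k) x_{k-1}\bigr) - y_k$, which recasts the linearization $h\bigl(c(y_k) + a_k \nabla c(y_k)(x^* - v_{k-1})\bigr)$ as the linearization at $y_k$ of the convex combination $a_k x^* + (1-a_k) x_{k-1}$. Bounding it via the convexity-of-the-pair constant $r$ followed by $\rho$-weak convexity of $h\circ c$ on $\dom g$ (together with $\|x^* - x_{k-1}\| \le M$) replaces it by $a_k h(c(x^*)) + (1-a_k) h(c(x_{k-1}))$ plus error terms $\tfrac{r a_k^2}{2}\|x^* - v_{k-1}\|^2$ and $a_k(1-a_k)\rho M^2$.

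The second ingredient is the minimality of $x_k$ in the $\tilde\mu$-strongly convex subproblem $F_{1/\tilde\mu}(\cdot; y_k)$, tested against $\bar x_k$. Convexity of $g$ and the identity $\bar x_k - y_k = a_k (v_k - v_{k-1})$ let me bound $F_{1/\tilde\mu}(\bar x_k; y_k)$ from above by $(1-a_k) g(x_{k-1}) + a_k F_{a_k}(v_k; y_k, v_{k-1}) + \tfrac{\tilde\mu a_k^2}{2}\|v_k - v_{k-1}\|^2$, and Lemma~\ref{lem:upper_lower} turns $F_{1/\tilde\mu}(x_k; y_k)$ into a valid upper bound on $F(x_k)$ while exposing the favorable slack $-\tfrac{\tilde\mu - \mu}{2}\|\bar x_k - x_k\|^2$ (the only place where the strict gap $\tilde\mu > \mu$ is invoked). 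Combining with the first ingredient and cancelling the $\|v_k - v_{k-1}\|^2$ terms yields the one-step inequality
\begin{align*}
F(x_k) - F(x^*) &\le (1-a_k)\bigl(F(x_{k-1}) - F(x^*)\bigr) + \tfrac{\tilde\mu a_k^2}{2}\bigl(\|x^* - v_{k-1}\|^2 - \|x^* - v_k\|^2\bigr) \\
&\quad + \tfrac{r a_k^2}{2}\|x^* - v_{k-1}\|^2 + a_k(1-a_k)\rho M^2 - \tfrac{\tilde\mu - \mu}{2}\|\bar x_k - x_k\|^2.
\end{align*}

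With $a_k = 2/(k+1)$, the relation $(1-a_k)/a_k^2 \le 1/a_{k-1}^2$ holds. Dividing the one-step inequality by $a_k^2$ and telescoping over $k = 1, \ldots, N$ (with $a_1 = 1$ killing the $F(x_0)$ boundary term) gives
\begin{align*}
\tfrac{F(x_N) - F(x^*)}{a_N^2} + \tfrac{\tilde\mu - \mu}{2} \sum_{k=1}^{N} \tfrac{\|\bar x_k - x_k\|^2}{a_k^2} \le \tfrac{\tilde\mu}{2}\|v_0 - x^*\|^2 + \tfrac{r M^2 N}{2} + \tfrac{\rho M^2 N(N-1)}{4}.
\end{align*}
Dropping the positive sum and multiplying by $a_N^2 = 4/(N+1)^2$ produces the function-value bound; when $r = 0$ Lemma~\ref{lem:con_const_comp} forces $\rho = 0$, so all $M$-terms vanish and one recovers the sharp accelerated rate $F(x_N) - F(x^*) \le 2\tilde\mu \|v_0 - x^*\|^2/(N+1)^2$. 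For the prox-gradient estimate I retain the reserved sum, use the decomposition $y_k - x_k = (\bar x_k - x_k) - a_k(v_k - v_{k-1})$, and apply strong convexity at the $v$-subproblem once more to control $a_k\|v_k - v_{k-1}\|$ by a quantity of the same order as $\|\bar x_k - x_k\|$; this yields $\|\mathcal{G}_{1/\tilde\mu}(y_k)\|^2 \lesssim \tilde\mu^2 \|\bar x_k - x_k\|^2$. Using $1/a_k^2 = (k+1)^2/4$ and $\sum_{k=1}^{N}(k+1)^2 \ge N(N+1)(2N+1)/6$, the final minimum over $j$ inherits the claimed $1/N^3 + r/N^2 + \rho/N$ scaling with the asserted constants.

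The main obstacle is twofold. The delicate part of the recursion is the simultaneous book-keeping of the two nonconvexity moduli $r$ and $\rho$: each must be invoked at exactly the right step (the convexity of the pair for the linearization-to-function step, the weak convexity of the composition only at the convex combination $a_k x^* + (1-a_k) x_{k-1}$) so that when $r = 0$ both error terms collapse, while when $r > 0$ one recovers the precise $M^2\bigl(r + \rho(N+3)/2\bigr)$ dependence. The second difficulty lies in upgrading the natural slack $\|\bar x_k - x_k\|^2$ to a genuine prox-gradient estimate $\|\mathcal{G}_{1/\tilde\mu}(y_k)\|^2 = \tilde\mu^2 \|y_k - x_k\|^2$ without losing the cubic denominator; this forces a careful bound on $\|v_k - v_{k-1}\|$ in terms of quantities that already appear on the right-hand side of the telescoped inequality.
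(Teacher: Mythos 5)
Your overall architecture --- the auxiliary point $\bar x_k = a_k v_k + (1-a_k)x_{k-1}$, the three-point inequality for the $v$-subproblem tested at $x^*$, the identity $a_k(x^*-v_{k-1}) = \bigl(a_k x^* + (1-a_k)x_{k-1}\bigr) - y_k$ followed by an application of the constants $r$ and then $\rho$, and the telescoping via $(1-a_k)/a_k^2 \le 1/a_{k-1}^2$ --- coincides with the paper's proof (Lemmas~\ref{lem: scnx} and~\ref{lem: telescoping}). The gap is in where you place the $(\tilde\mu-\mu)$-slack and, consequently, in how you recover the prox-gradient. The only step that uses $\tilde\mu>\mu$ is the cancellation between the prox penalty $\frac{\tilde\mu}{2}\|x_k-y_k\|^2$ inside $F_{1/\tilde\mu}(x_k;y_k)$ and the model error $\frac{\mu}{2}\|x_k-y_k\|^2$ from Lemma~\ref{lem:upper_lower}; both are centered at $y_k$, so the surviving slack is $-\frac{\tilde\mu-\mu}{2}\|x_k-y_k\|^2$, not $-\frac{\tilde\mu-\mu}{2}\|\bar x_k-x_k\|^2$. (The term in $\|\bar x_k - x_k\|^2$ that strong convexity of $F_{1/\tilde\mu}(\cdot;y_k)$ produces carries coefficient $\tilde\mu/2$ and is simply discarded.) Since $\mathcal{G}_{1/\tilde\mu}(y_k)=\tilde\mu\bigl(y_k-S_{1/\tilde\mu}(y_k)\bigr)=\tilde\mu(y_k-x_k)$ by definition, keeping the slack in the variable $\|x_k-y_k\|^2$ makes the prox-gradient bound immediate --- no conversion is needed.

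Your proposed conversion, by contrast, does not go through. You decompose $y_k-x_k = (\bar x_k - x_k) - a_k(v_k-v_{k-1})$ and assert that strong convexity of the $v$-subproblem controls $a_k\|v_k-v_{k-1}\|$ by a quantity of the same order as $\|\bar x_k - x_k\|$. There is no such bound: $\bar x_k - x_k$ can vanish while $v_k - v_{k-1}$ is large, since the $x$-update and the $v$-update are driven by different objectives, and the three-point inequality for $v_k$ only yields the telescoping pair in $\|x^*-v_{k-1}\|^2-\|x^*-v_k\|^2$, not a bound on $\|v_k-v_{k-1}\|$ in terms of $\|\bar x_k - x_k\|$. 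So as written the prox-gradient estimate --- the main assertion of the theorem --- is not established. The repair is minor: redo the one-step inequality reserving $-\frac{\tilde\mu-\mu}{2}\|y_k-x_k\|^2$ as the slack (this is exactly inequality \eqref{eqn: bound_tele}), after which your telescoping together with $\sum_{j=1}^N (j+1)^2/4 \ge N(N+1)(2N+1)/24$ and $\sum_{j=1}^N (j+1)/2 = N(N+3)/4$ gives the stated constants; the $r=0$ function-value bound and the reduction to $\rho=0$ via Lemma~\ref{lem:con_const_comp} are fine as you have them.
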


Succinctly, setting $\tilde \mu:=2\mu$, Theorem~\ref{thm: cnx} guarantees the bound \[  \min_{j = 1, \ldots, N} \norm{\mathcal{G}_{1/\tilde{\mu}}(y_j)}^2 \le \mathcal{O}\left(\frac{\mu^2\|x^*-v_0\|^2}{N^3}\right)
+ \frac{r}{\mu}\cdot \mathcal{O}\left(\frac{ \mu^2 M^2 }{ N^2 }\right) +\frac{\rho}{\mu}\cdot \mathcal{O}\left(\frac{ \mu^2 M^2 }{ N }
\right).
\]
The fractions $0\leq\frac{\rho}{\mu}\leq\frac{r}{\mu}\leq 1$ balance the three terms, corresponding to different levels of ``convexity''. 

Our proof of Theorem~\ref{thm: cnx} is based on two basic lemmas, as is common for accelerated methods \cite{tseng_f_order}.
\begin{lemma}[Three-point comparison] Consider the point $z := S_{t, \alpha}(y,v)$ for some points $y,v\in \R^d$ and real numbers $t,\alpha >0$. Then  for all  $w\in\R^d$ the inequality holds:
	\begin{align*}
		F_{\alpha}(z; y, v)   \le F_{\alpha}(w; y, v)  + \frac{1}{2 t} \left (  \norm{w-v}^2 -
		\norm{w-z}^2 - \norm{z-v}^2 \right ).
	\end{align*} \label{lem: scnx}
\end{lemma}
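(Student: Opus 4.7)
The plan is to derive this inequality as a direct consequence of strong convexity of the objective $F_{t,\alpha}(\cdot;y,v)$, so the proof should be short.

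First, I would observe that the map $z \mapsto F_\alpha(z;y,v)$ is convex in $z$: indeed, $g$ is convex by assumption, while $z \mapsto h\bigl(c(y)+\alpha\nabla c(y)(z-v)\bigr)$ is the composition of the convex function $h$ with an affine map, hence convex. Consequently, $F_{t,\alpha}(\cdot;y,v) = F_\alpha(\cdot;y,v) + \tfrac{1}{2t}\|\cdot - v\|^2$ is $\tfrac{1}{t}$-strongly convex.

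Next, since $z$ is by definition the minimizer of $F_{t,\alpha}(\cdot;y,v)$, strong convexity yields the standard minimizer inequality
\[
F_{t,\alpha}(w;y,v) \;\ge\; F_{t,\alpha}(z;y,v) + \frac{1}{2t}\|w-z\|^2 \qquad \text{for all } w\in\R^d.
\]
Expanding both sides using the definition $F_{t,\alpha}(\cdot;y,v) = F_\alpha(\cdot;y,v)+\tfrac{1}{2t}\|\cdot-v\|^2$ and rearranging the $\|\cdot-v\|^2$ terms to the right-hand side gives exactly the claimed three-point comparison.

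There is essentially no obstacle here; the only thing to be careful about is pointing out the convexity of $z \mapsto h(c(y)+\alpha\nabla c(y)(z-v))$ explicitly, since the broader paper deals with \emph{nonconvex} composite functions and the reader might momentarily worry that strong convexity of $F_{t,\alpha}(\cdot;y,v)$ is not automatic. Once that is established, the proof is a one-line application of the minimizer characterization for strongly convex functions.
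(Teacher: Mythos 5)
Your proof is correct and follows essentially the same route as the paper, which likewise derives the inequality from the $\tfrac{1}{t}$-strong convexity of $F_{t,\alpha}(\cdot;y,v)$ and the fact that $z$ is its minimizer. Your extra remark that $F_\alpha(\cdot;y,v)$ is convex because $h$ is composed with an affine map is a helpful clarification but does not change the argument.
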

\begin{proof} This follows immediately by noting that the function $F_{t,\alpha}(\cdot;y,v)$ 
	is strongly convex with constant $1/t$ and $z$ is its minimizer by definition.
\end{proof}

\begin{lemma}[Telescoping] \label{lem: telescoping} Let $a_k$, $y_k$, $x_k$, and $v_k$ be the iterates generated by Algorithm~\ref{alg: constant}.  
	Then for any point $x\in \R^d$ and any index $k$, the inequality holds: 
	\begin{equation}\label{eqn: bound_tele}
		\begin{aligned}
			F(x_k)\leq a_k F(x)+&(1-a_k)F(x_{k-1})+\frac{\tilde \mu a_k^2}{2}(\|x-v_{k-1}\|^2-\|x-v_k\|^2)\\
			&- \frac{\tilde \mu-\mu}{2}\|y_k-x_k\|^2+\rho a_k\|x-x_{k-1}\|^2+\frac{ra_k^2}{2}\|x-v_{k-1}\|^2.
		\end{aligned} 
	\end{equation}
\end{lemma}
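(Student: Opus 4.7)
The plan is to combine four ingredients: the upper-model bound of Lemma~\ref{lem:upper_lower} for $F(x_k)$, optimality of $x_k$ against a cleverly chosen test point, the three-point comparison (Lemma~\ref{lem: scnx}) applied to $v_k$, and the two convexity moduli $r$ and $\rho$. The crucial algebraic bridges will be the identities
\[
u - y_k = a_k(v_k - v_{k-1}), \qquad w' - y_k = a_k(x - v_{k-1}),
\]
obtained from $y_k = a_k v_{k-1} + (1-a_k) x_{k-1}$ with the choices $u := a_k v_k + (1-a_k) x_{k-1}$ and $w' := a_k x + (1-a_k) x_{k-1}$. These identities let me convert fluidly between the three proximal subproblems that define $x_k$, $v_k$, and the test evaluation at $x$.

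First, I would use Lemma~\ref{lem:upper_lower} together with the trivial identity $F(x_k;y_k) = F_{1/\tilde\mu}(x_k;y_k) - \tfrac{\tilde\mu}{2}\|x_k - y_k\|^2$ to get
\[
F(x_k) \le F_{1/\tilde\mu}(x_k;y_k) - \tfrac{\tilde\mu - \mu}{2}\|x_k - y_k\|^2,
\]
and then invoke optimality of $x_k = S_{1/\tilde\mu}(y_k)$ to replace $x_k$ by the test point $u$. Since $u - y_k = a_k(v_k-v_{k-1})$, the linearization inside $F_{1/\tilde\mu}(u;y_k)$ coincides with the one inside $F_{a_k}(v_k;y_k,v_{k-1})$, so
\[
F_{1/\tilde\mu}(u;y_k) = g(u) + a_k\bigl[F_{a_k}(v_k;y_k,v_{k-1}) - g(v_k)\bigr] + \tfrac{\tilde\mu a_k^2}{2}\|v_k - v_{k-1}\|^2.
\]
Convexity of $g$ yields $g(u) - a_k g(v_k) \le (1-a_k) g(x_{k-1})$, and the three-point comparison applied to $v_k$ with witness $w = x$, after multiplying by $a_k$, bounds $a_k F_{a_k}(v_k;y_k,v_{k-1})$ by $a_k F_{a_k}(x;y_k,v_{k-1})$ plus $\tfrac{\tilde\mu a_k^2}{2}\bigl(\|x-v_{k-1}\|^2 - \|x-v_k\|^2 - \|v_k-v_{k-1}\|^2\bigr)$; the $\|v_k-v_{k-1}\|^2$ contributions cancel cleanly.

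It remains to convert $a_k F_{a_k}(x;y_k,v_{k-1}) = a_k g(x) + h\bigl(c(y_k) + a_k\nabla c(y_k)(x-v_{k-1})\bigr)$ into $a_k F(x) + (1-a_k) h(c(x_{k-1}))$ plus quadratic error. Using $w' - y_k = a_k(x-v_{k-1})$, the inner linearization equals $c(y_k) + \nabla c(y_k)(w'-y_k)$, so the convexity constant $r$ of the pair $(h,c)$ gives
\[
h\bigl(c(y_k) + \nabla c(y_k)(w'-y_k)\bigr) \le h(c(w')) + \tfrac{r a_k^2}{2}\|x-v_{k-1}\|^2,
\]
and $\rho$-weak convexity of $h\circ c$ on $\dom g$ (Lemma~\ref{lem:prox_reg}) then bounds $h(c(w'))$ by $a_k h(c(x)) + (1-a_k) h(c(x_{k-1})) + \rho a_k(1-a_k)\|x-x_{k-1}\|^2$. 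Assembling all the pieces, the $(1-a_k) h(c(x_{k-1}))$ contribution merges with the earlier $(1-a_k) g(x_{k-1})$ to form $(1-a_k) F(x_{k-1})$, and the remaining quadratic terms match the claim after noting $\rho a_k(1-a_k) \le \rho a_k$.

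I do not anticipate a deep obstacle; this is essentially an estimate-accounting proof in the spirit of Nesterov's and Tseng's accelerated-method telescopes. The only delicate point is recognizing that the three proximal subproblems defining $x_k$, $v_k$, and $F_{a_k}(x;y_k,v_{k-1})$ all share the same linearization pattern once one passes through the identities $u - y_k = a_k(v_k-v_{k-1})$ and $w'-y_k = a_k(x-v_{k-1})$; everything else is a careful application of convexity of $g$ and $h$, the convexity of the pair $(h,c)$, and weak convexity of the composition.
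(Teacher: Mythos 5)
Your proposal is correct and follows essentially the same route as the paper's proof: the upper-model bound of Lemma~\ref{lem:upper_lower} at $x_k$, comparison against the test point $a_k v_k + (1-a_k)x_{k-1}$, the three-point inequality for $v_k$ with witness $x$, and then the convexity constant of the pair together with $\rho$-weak convexity applied at $a_k x + (1-a_k)x_{k-1}$. The only cosmetic difference is that you use plain minimality of $x_k$ where the paper invokes Lemma~\ref{lem: scnx} and then discards the extra term $-\|w_k-x_k\|^2$ anyway, so nothing is lost.
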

\begin{proof}
	Notice that all the points $x_k$, $y_k$, and $v_k$  lie in $\dom g$. 
	From inequality \eqref{eqn:ineq_min_max}, we have 
	\begin{equation} F(x_k) \le h\big ( c(y_k) + \nabla c(y_k) (x_k-y_k) \big ) + g(x_k) +
		\frac{\mu}{2} \norm{x_k-y_k}^2.  \label{eq: comp1}
	\end{equation}
	Define the point $w_k:=a_k v_k + (1-a_k) x_{k-1}$.
	Applying
	Lemma~\ref{lem: scnx} to $x_k=S_{1/\tilde{\mu},1}(y_k, y_k)$
	with $w=w_k$  yields the inequality
	\begin{equation}\label{eqn:rev_new_eq1}
		\begin{aligned}
			h(c(y_k)+\nabla c(y_k)(x_k-y_k))+g(x_k)&\leq h(c(y_k)
			+\nabla c(y_k)(w_k-y_k))\\
			&+\frac{\tilde\mu}{2}(\|w_k-y_k\|^2-\|w_k-x_k\|^2-\|x_k-y_k\|^2)\\
			&+a_kg(v_k)+(1-a_k)g(x_{k-1}).
		\end{aligned}
	\end{equation}
	Note the equality
	$w_k-y_k=a_k(v_k-v_{k-1})$.
	Applying Lemma~\ref{lem: scnx} again with $v_k=S_{\frac{1}{\tilde{\mu} a_k}, a_k}(y_{k}, v_{k-1})$ and $w=x$ yields
	\begin{equation}\label{eqn:inter_mid}
		\begin{aligned}
			h \big ( c(y_k) + a_k\nabla c(y_k)(v_k - v_{k-1}) \big ) +
			&a_kg(v_k) \le h \big ( c(y_k) +
			a_k\nabla c(y_k)(x-v_{k-1})  \big )  + a_kg(x) \\
			& \quad + \frac{ \tilde{\mu} a_k^2}{2} \left (  \norm{x-v_{k-1}}^2 -
			\norm{x-v_k}^2 - \norm{v_k-v_{k-1}}^2 \right ).
		\end{aligned}
	\end{equation}
	Define  the point $\hat x:=a_kx+(1-a_k)x_{k-1}$.
	Taking into account $a_k(x-v_{k-1})=\hat x-y_k$, we conclude
	\begin{equation}\label{eqn:fin_step}
		\begin{aligned}
			h(c(y_k)+\nabla c(y_k)(\hat x-y_{k}))&\leq  (h\circ c)(\hat x) +\frac{r}{2}\|\hat x-y_k\|^2\\
			&\leq a_kh(c(x))+(1-a_k)h(c(x_{k-1}))\\
			&+\rho a_k(1-a_k)\|x-x_{k-1}\|^2+\frac{ra_k^2}{2}\|x-v_{k-1}\|^2.
		\end{aligned}
	\end{equation}
	Thus combining inequalities \eqref{eq: comp1}, \eqref{eqn:rev_new_eq1}, \eqref{eqn:inter_mid}, and \eqref{eqn:fin_step}, and upper bounding $1-a_k\leq 1$ and $-\|w_k-x_k\|^2\leq 0$, we obtain
	\begin{align*}
		F(x_k)\leq a_k F(x)+(1-a_k)&F(x_{k-1})+\frac{\tilde \mu a_k^2}{2}(\|x-v_{k-1}\|^2-\|x-v_k\|^2)\\
		&- \frac{\tilde \mu-\mu}{2}\|y_k-x_k\|^2+\rho a_k\|x-x_{k-1}\|^2+\frac{ra_k^2}{2}\|x-v_{k-1}\|^2.
	\end{align*}
	The proof is complete.
\end{proof}

The proof of Theorem~\ref{thm: cnx} now quickly follows.
\begin{proof}[Proof of Theorem~\ref{thm: cnx}]
	Set $x=x^*$ in inequality \eqref{eqn: bound_tele}. Rewriting \eqref{eqn: bound_tele} by subtracting $F(x^*)$ from both sides, we obtain 
	\begin{align}
		\frac{F(x_k)-F(x^*)}{a_k^2} + \frac{\tilde{\mu}}{2} \norm{x^*-v_k}^2 &\le
		\frac{1-a_k}{a_k^2} \big ( F(x_{k-1})-F(x^*) \big ) + \frac{\tilde{\mu} }{2}
		\norm{x^*-v_{k-1}}^2 \nonumber \\
		& \quad + \frac{ \rho M^2}{a_k}+\frac{rM^2 }{2}- \frac{\tilde{\mu}-\mu}{2 a_k^2}
		\norm{x_k-y_k}^2. \label{eq: something}
	\end{align}
	Using the inequality $\frac{1-a_k}{a_k^2} \le \frac{1}{a_{k-1}^2}$ and
	recursively applying the inequality above $N$ times, we get 
	\begin{align}
		\frac{F(x_N)-F(x^*)}{a_N^2} + \frac{\tilde{\mu}}{2} \|x^*-v_N&\|^2 \le
		\frac{1-a_1}{a_1^2} \big ( F(x_0)-F(x^*) \big ) + \frac{\tilde{\mu} }{2}
		\norm{x^*-v_0}^2
		\nonumber \\
		& + \rho M^2  \left (
		\sum_{j=1}^N \frac{1}{a_j} \right ) +\frac{NrM^2}{2}-
		\frac{\tilde{\mu}-\mu}{2} \sum_{j=1}^N
		\frac{\norm{x_j-y_j}^2}{a_j^2}. \label{eq: iteration}
	\end{align}
	Noting $F(x_N)-F(x^*) > 0$ and $a_1 = 1$, we obtain
	\begin{align}
		\frac{\tilde{\mu}-\mu}{2} \sum_{j=1}^N \frac{\norm{x_j-y_j}^2}{a_j^2} &\le
		\frac{\tilde{\mu}}{2} \norm{x^*-v_0}^2 + \rho M^2   \left (
		\sum_{j=1}^N \frac{1}{a_j} \right ) +\frac{NrM^2}{2}\label{eq: something_1}
	\end{align}
	and hence
	\begin{align*}
		\frac{\tilde{\mu}-\mu}{2} \left ( \sum_{j=1}^N \frac{1}{a_j^2} \right )
		\min_{j=1, \hdots, N}  \norm{x_j-y_j}^2 
		&\le \frac{\tilde{\mu}}{2} \norm{x^*-v_0}^2 + \rho M^2  \left (
		\sum_{j=1}^N \frac{1}{a_j} \right ) +\frac{NrM^2}{2}.
	\end{align*}
	Using the definition  $a_k = \frac{2}{k+1}$, we conclude
	\[\sum_{j=1}^N \frac{1}{a_j^2} = \frac{1}{4} \sum_{j=1}^N (j+1)^2 \ge \frac{1}{4}
	\sum_{j=1}^N j^2 = \frac{N (N+1) (2N+1)}{24}\]
	and 
	\[ \sum_{j=1}^N \frac{1}{a_j} = \sum_{j=1}^N \frac{j+1}{2} =
	\frac{N(N+3)}{4}.\]
	With these bounds, we finally deduce
	\begin{align*}
		\min_{j = 1, \hdots N} ~ \norm{x_j-y_j}^2 \le \frac{24}{\tilde{\mu}-\mu}\left(
		\frac{ \tilde{\mu} \norm{x^*-v_0}^2}{
			N (N + 1)(2N+1) } +\frac{M^2(r+\frac{\rho}{2}(N+3)}{(N+1)(2N+1)}\right),
	\end{align*}
	thereby establishing the first claimed efficiency estimate in Theorem~\ref{thm: cnx}. 
	
	Finally suppose $r=0$, and hence we can assume $\rho=0$ by Lemma~\ref{lem:con_const_comp}. 
	Inequality \eqref{eq: iteration} then becomes
	\begin{align*}
		\frac{F(x_N)-F(x^*)}{a_N^2} &+ \frac{\tilde{\mu}}{2} \norm{x^*-v_N}^2 
		\leq\frac{\tilde{\mu} }{2} \norm{x^*-v_0}^2  -
		\frac{\tilde{\mu}-\mu}{2} \sum_{j=1}^N \frac{\norm{x_j-y_j}^2}{a_j^2}
		\label{eq: 1}.
	\end{align*}
	Dropping terms, we deduce
	$\frac{F(x_N)-F(x^*)}{a_N^2}\leq \frac{\tilde{\mu} }{2} \norm{x^*-v_0}^2,$
	and the claimed efficiency estimate follows.
\end{proof}


\subsection{Inexact computation}

Completely analogously, we can consider an inexact accelerated prox-linear method based on approximately solving the duals of the prox-linear subproblems (Algorithm~\ref{alg: constant_inexact}). 

{\LinesNotNumbered
	\begin{algorithm}[h!]
		\SetKw{Null}{NULL}
		\SetKw{Return}{return}
		\Initialize{Fix two points $x_0, v_0 \in \text{dom} \,
			g$ and a real number $\tilde{\mu} >  \mu $.}
		{\bf Step k:} ($k\geq 1)$ Compute
		\begin{align*}
			a_k &= \tfrac{2}{k+1}\\
			y_k &= a_k v_{k-1} + (1-a_k) x_{k-1}
		\end{align*}
		- Find $(x_{k},\zeta_{k})$ such that $\|\zeta_{k}\|\leq \varepsilon_{k}$ and $x_k$ is the minimizer of the function 
		\begin{equation}
			z\mapsto g(z)+h\Big(\zeta_k+c(y_k)+\nabla c(y_k)(z -y_k)\Big)+\frac{\tilde\mu}{2}\|z-y_k\|^2.
		\end{equation}
		
		- Find $(v_{k},\xi_{k})$ such that $\|\xi_{k}\|\leq \delta_{k}$ and $v_k$ is the minimizer of the function 
		\begin{equation}
			v\mapsto g(v)+\frac{1}{a_k}h\Big(\xi_k+c(y_k)+a_k\nabla c(y_k)(v -v_{k-1})\Big)+\frac{\tilde{\mu}a_k}{2}\|v-v_{k-1}\|^2.
		\end{equation}			
		%
		\caption{Inexact accelerated prox-linear method: near-stationarity}
		\label{alg: constant_inexact}
\end{algorithm}}

\begin{theorem}[Convergence of inexact accelerated prox-linear method:  near- stationarity] {\hfill \\ }
	Fix a real number $\tilde \mu\geq \mu$ and let $x^*$ be any point satisfying $F(x^*)\leq F(x_k)$ for iterates $x_k$ generated by Algorithm~\ref{alg: constant_inexact}. 	
	Then for any $N \ge 1$, the iterates $x_k$ satisfy the inequality:
	\begin{align*}
		\min_{i=1,\ldots,N} \|\mathcal{G}_{1/\tilde\mu}(y_j)\|^2\leq 
		&
		\frac{48\tilde\mu^2}{\tilde{\mu}-\mu}\left( \frac{\|x^*-v_0\|^2}{N(N+1)(2N+1)}
		+\frac{M^2(r+\frac{\rho}{2}(N+3))}{(N+1)(2N+1)} + \frac{4L\sum_{j=1}^N\tfrac{2\varepsilon_j+\delta_j}{a_j^2}}{N(N+1)(2N+1)}\right).
	\end{align*}
	Moreover, in the case $r = 0$, the inequality above holds with the second summand on the right-hand-side replaced by zero (even if $M=\infty$) and the following complexity bound on function values holds:
	$$F(x_N)-F(x^*)\leq \frac{2\tilde\mu \|v_0-x^*\|^2+8L\sum^N_{j=1} \frac{\varepsilon_j+\delta_j}{a_j^2}}{(N+1)^2}.$$
	\label{thm: dual_apla}
\end{theorem}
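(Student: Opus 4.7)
The plan is to mimic the proof of Theorem~\ref{thm: cnx}, tracking inexactness through the two places where three-point comparisons are used. The crucial observation is that, by Lemma~\ref{lem:prim_revorery} applied to each prox-linear subproblem (with linear maps $A = -\nabla c(y_k)$ and $A = -a_k \nabla c(y_k)$), the iterates $x_k$ and $v_k$ are \emph{exact} minimizers of strongly convex subproblems in which $c(y_k)$ has been additively perturbed by $\zeta_k$ and $\xi_k$, respectively. Hence Lemma~\ref{lem: scnx} still applies verbatim to these perturbed objectives, and the cost of replacing perturbed $h$-values by unperturbed ones throughout is controlled by the $L$-Lipschitz bound $|h(\zeta+w)-h(w)|\leq L\|\zeta\|$.

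First I would rerun the telescoping argument of Lemma~\ref{lem: telescoping}. Invoking Lemma~\ref{lem: scnx} on the $\zeta_k$-perturbed problem with test point $w_k := a_k v_k + (1-a_k) x_{k-1}$, and absorbing the two resulting one-sided Lipschitz bounds, yields an inequality identical to the one in the exact proof but with an additional additive term $2L\varepsilon_k$ on the right. Applying Lemma~\ref{lem: scnx} a second time on the $\xi_k$-perturbed subproblem with test point $x^*$, and multiplying through by $a_k$, adds another $2L\delta_k$. Combining these with \eqref{eq: comp1}, the convexity estimate $g(w_k)\leq a_k g(v_k)+(1-a_k)g(x_{k-1})$, weak convexity of $\Phi$ with constant $\rho$, and the convexity-of-the-pair estimate with constant $r$ (exactly as in the proof of Lemma~\ref{lem: telescoping}) reproduces inequality \eqref{eqn: bound_tele} augmented by $2L(\varepsilon_k+\delta_k)$. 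Subtracting $F(x^*)$, dividing by $a_k^2$, using $(1-a_k)/a_k^2\leq 1/a_{k-1}^2$, and telescoping from $k=1$ to $N$ yields
\begin{equation*}
\frac{\tilde\mu-\mu}{2}\sum_{j=1}^N \frac{\|x_j-y_j\|^2}{a_j^2}\leq \frac{\tilde\mu}{2}\|x^*-v_0\|^2+\rho M^2\sum_{j=1}^N\tfrac{1}{a_j}+\tfrac{N r M^2}{2}+2L\sum_{j=1}^N\frac{\varepsilon_j+\delta_j}{a_j^2},
\end{equation*}
exactly as in \eqref{eq: something_1} but with the last summand as overhead.

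The remaining step is to pass from the surrogate step-size $\|x_j-y_j\|$ to the true prox-gradient $\|\mathcal{G}_{1/\tilde\mu}(y_j)\|$; this is where the slightly asymmetric coefficient $2\varepsilon_j+\delta_j$ appears. Applying Lemma~\ref{lem: dual_grad} with $t=1/\tilde\mu$ to the $\zeta_j$-perturbed subproblem gives $\|\mathcal{G}_{1/\tilde\mu}(y_j)\|^2\leq 8L\tilde\mu\varepsilon_j+2\tilde\mu^2\|x_j-y_j\|^2$, i.e.\ $\|x_j-y_j\|^2\geq \tfrac{1}{2\tilde\mu^2}\|\mathcal{G}_{1/\tilde\mu}(y_j)\|^2-\tfrac{4L\varepsilon_j}{\tilde\mu}$. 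Substituting into the telescoped bound, using $(\tilde\mu-\mu)/\tilde\mu\leq 1$ to merge the two $\varepsilon_j$-terms into $2L\sum(2\varepsilon_j+\delta_j)/a_j^2$, and then bounding $\min_j\|\mathcal{G}_{1/\tilde\mu}(y_j)\|^2\leq (\sum 1/a_j^2)^{-1}\sum\|\mathcal{G}_{1/\tilde\mu}(y_j)\|^2/a_j^2$ together with the closed-form estimates $\sum 1/a_j^2\geq N(N+1)(2N+1)/24$ and $\sum 1/a_j = N(N+3)/4$ produces the claimed inequality. The case $r=0$ (hence $\rho=0$ by Lemma~\ref{lem:con_const_comp}) is even cleaner: the unused term $\tfrac{F(x_N)-F(x^*)}{a_N^2}$ on the left of the telescoped inequality need not be dropped, and multiplying through by $a_N^2 = 4/(N+1)^2$ directly yields the functional bound $F(x_N)-F(x^*)\leq \tfrac{2\tilde\mu\|x^*-v_0\|^2+8L\sum(\varepsilon_j+\delta_j)/a_j^2}{(N+1)^2}$ without any dependence on $M$. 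The only genuinely delicate point is the bookkeeping in step four so that the two different sources of $\varepsilon_j$-error combine cleanly into the coefficient $2\varepsilon_j+\delta_j$ appearing in the statement.
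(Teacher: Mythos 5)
Your proposal is correct and follows essentially the same route as the paper's proof: the paper likewise exploits that $x_k$ and $v_k$ are exact minimizers of the $\zeta_k$- and $\xi_k$-perturbed strongly convex subproblems, transfers between perturbed and unperturbed $h$-values via $L$-Lipschitzness (incurring $2L\varepsilon_k$ and $2L\delta_k$), telescopes to the same bound with overhead $2L\sum(\varepsilon_j+\delta_j)/a_j^2$, and then converts $\|x_j-y_j\|$ to $\|\mathcal{G}_{1/\tilde\mu}(y_j)\|$ via Lemma~\ref{lem:pass_tostat}, merging the two $\varepsilon_j$-sources into the $2\varepsilon_j+\delta_j$ coefficient exactly as you describe. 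The only cosmetic difference is that you cite Lemma~\ref{lem:prim_revorery} and Lemma~\ref{lem: scnx} explicitly where the paper writes out the corresponding strong-convexity inequalities inline.
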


The proof appears in Appendix~\ref{sec:proofs_append}.
Thus to preserve the rate in $N$ of the exact accelerated prox-linear method in Theorem~\ref{thm: cnx}, it suffices to require the sequences $\frac{\varepsilon_j}{a_j^2},\frac{\delta_j}{a_j^2}$ to be summable. Hence we can set $\varepsilon_j,\delta_j\sim \frac{1}{j^{3+q}}$ for some $q>0$.

Similarly, we can consider an inexact version of the accelerated prox-linear method based on approximately solving the primal problems in function value. The scheme is recorded in Algorithm~\ref{alg: constant_inex}. 

{\LinesNotNumbered
	\begin{algorithm}[h!]
		\SetKw{Null}{NULL}
		\SetKw{Return}{return}
		\Initialize{Fix two points $x_0, v_0 \in \text{dom} \,
			g$, a real number $\tilde{\mu} >  L\beta $, and two sequences $\varepsilon_i,\delta_i\geq 0$ for $i=1,2,\ldots,\infty$.}
		{\bf Step k:} ($k\geq 1)$ Compute
		\begin{align}
			&a_k = \tfrac{2}{k+1}\\
			&y_k = a_k v_{k-1} + (1-a_k) x_{k-1}\\
			&\textrm{Set }x_k \textrm{ to be a }\varepsilon_k\textrm{-approximate minimizer of } F_{1/\tilde{\mu}}(\cdot;y_k)\\
			&\textrm{Set }v_k \textrm{ to be a }\delta_k\textrm{-approximate minimizer of } F_{\frac{1}{\tilde{\mu}a_k},a_k}(\cdot;y_k,v_{k-1})
		\end{align}
		%
		\caption{Accelerated prox-linear method: near-optimality}
		\label{alg: constant_inex}
\end{algorithm}}

Theorem~\ref{thm: funct_apla} presents convergence guarantees of Algorithm~\ref{alg: constant_inex}. 
The statement of Theorem~\ref{thm: funct_apla} is much more cumbersome than the analogous Theorem~\ref{thm: dual_apla}. The only take-away message for the reader is that to preserve the rate of the exact accelerated prox-linear method in Theorem~\ref{thm: cnx} in terms of $N$, it sufficies for the sequences $\{\sqrt{i \delta_i}\}$, $\{i \delta_i\}$, and $\{i^2\varepsilon_i\}$ to be summable. Thus it suffices to take $\varepsilon_i, \delta_i\sim \frac{1}{i^{3+q}}$ for some $q>0$.

The proof of Theorem~\ref{thm: funct_apla} appears in Appendix~\ref{sec:proofs_append}. Analysis of inexact accelerated methods of this type for additive convex composite problems has appeared in a variety of papers, including \cite{inex_comput,villa,catalyst}.  In particular, our proof shares many features with that of  \cite{inex_comput}, relying on approximate subdifferentials and the recurrence relation  \cite[Lemma 1]{inex_comput}.


\begin{theorem}[Convergence of the accelerated prox-linear algorithm: near-optimality]	 Fix a real number $\tilde \mu> \mu$, and let $x^*$ be any point satisfying $F(x^*)\leq F(x_k)$ for iterates $x_k$ generated by Algorithm~\ref{alg: constant_inex}. 
	Then  the iterates $x_k$  satisfy the inequality:
	\begin{align*}
		\min_{i=1,\ldots,N} \|\mathcal{G}_{1/\tilde{\mu}}(y_i)\|^2\leq &\frac{96\tilde{\mu}^2}{\tilde\mu-\mu}\Big( \frac{\tilde\mu\|x^*-v_0\|^2}{2N(N+1)(2N+1)}+
		+\frac{M^2(r+\frac{\rho}{2}(N+3))}{2(N+1)(2N+1)}\\
		&+\frac{\sum_{i=1}^N(\tfrac{\delta_ia_i+3\varepsilon_i}{a_i^2})
			+A_N\sqrt{2\tilde\mu}\sum_{i=1}^N \sqrt{\tfrac{\delta_i}{a_i}}}{N(N+1)(2N+1)}\Big)
	\end{align*} 
	with
	\begin{align*}
		A_N:=&\sqrt{\tfrac{2}{\tilde\mu}}\sum_{i=1}^N\sqrt{\tfrac{\delta_i}{a_i}}
		&+\left(\|x^*-v_0\|^2+\tfrac{ M^2N(r+\frac{\rho}{2}(N+3))}{\tilde\mu}+\tfrac{2}{\tilde\mu}\sum^N_{i=1}\tfrac{\delta_ia_i+2\varepsilon_i}{a_i^2}+\tfrac{2}{\tilde\mu}\left(\sum_{i=1}^N \sqrt{\tfrac{\delta_i}{a_i}}\right)^2\right)^{1/2}.
	\end{align*}
	%
	Moreover, in the case $r = 0$, the inequality above holds with the second summand on the right-hand-side replaced by zero (even if $M=\infty$), and the following complexity bound on function values holds:
	\[ F(x_N)-F(x^*) \le \frac{ 2 \tilde{\mu} \norm{x^*-v_0}^2 + 4  \sum_{i=1}^N \frac{\delta_ia_i+2\varepsilon_i}{a_i^2}+4A_N\sqrt{2\tilde{\mu}}\sum_{i=1}^N\sqrt{\frac{\delta_i}{a_i}}}{(N+1)^2}.\] 
	\label{thm: funct_apla}
\end{theorem}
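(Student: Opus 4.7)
My approach mirrors the exact analysis of Theorem~\ref{thm: cnx}, replacing the three-point comparison (Lemma~\ref{lem: scnx}) and telescoping identity (Lemma~\ref{lem: telescoping}) with inexact versions that carry the errors $\varepsilon_k,\delta_k$ through to the final estimate. The first step is an inexact three-point inequality: if $z$ is an $\varepsilon$-approximate minimizer of the $1/t$-strongly convex function $F_{t,\alpha}(\cdot;y,v)$ with exact minimizer $z^\star$, then strong convexity forces $\|z-z^\star\|\le\sqrt{2t\varepsilon}$, and combining Lemma~\ref{lem: scnx} applied at $z^\star$ with the triangle-inequality estimate $-\|w-z^\star\|^2 \le -\|w-z\|^2 + 2\sqrt{2t\varepsilon}\,\|w-z\|$ yields, for every $w$,
\begin{equation*}
F_\alpha(z;y,v) \le F_\alpha(w;y,v) + \tfrac{1}{2t}\bigl(\|w-v\|^2-\|w-z\|^2-\|z-v\|^2\bigr) + \varepsilon + \tfrac{1}{t}\sqrt{2t\varepsilon}\,\|w-z\|.
\end{equation*}

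Applying this inequality to the two subproblems in step $k$ of Algorithm~\ref{alg: constant_inex} --- first with $z=x_k$, $t=1/\tilde\mu$, $\alpha=1$, tolerance $\varepsilon_k$, and then with $z=v_k$, $t=1/(\tilde\mu a_k)$, $\alpha=a_k$, tolerance $\delta_k$ --- and running the same weak-convexity and pair-convexity bookkeeping as in the proof of Lemma~\ref{lem: telescoping} gives an inexact telescoping bound of the shape
\begin{equation*}
\tfrac{F(x_k)-F(x^*)}{a_k^2}+\tfrac{\tilde\mu}{2}\|x^*-v_k\|^2 \le \tfrac{1-a_k}{a_k^2}(F(x_{k-1})-F(x^*))+\tfrac{\tilde\mu}{2}\|x^*-v_{k-1}\|^2+E_k-\tfrac{\tilde\mu-\mu}{2a_k^2}\|x_k-y_k\|^2,
\end{equation*}
where $E_k$ collects the weak-convexity terms $\rho M^2/a_k + rM^2/2$, a clean error contribution of order $(\delta_ka_k+\varepsilon_k)/a_k^2$ (obtained by absorbing the $\sqrt{\varepsilon_k}\|w-z\|$ term into $\|w-z\|^2$ via AM-GM), and a genuinely bilinear cross term $\sqrt{2\tilde\mu\,\delta_k/a_k}\,\|x^*-v_k\|$ coming from the $v_k$-subproblem. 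Using $(1-a_k)/a_k^2 \le 1/a_{k-1}^2$, $a_1=1$, and the closed-form sums $\sum a_j^{-2}\ge N(N+1)(2N+1)/24$ and $\sum a_j^{-1}=N(N+3)/4$ exactly as in Theorem~\ref{thm: cnx}, telescoping from $k=1$ to $N$ and dropping the non-negative $(F(x_N)-F(x^*))/a_N^2$ term yields $\|x^*-v_N\|^2 \le \tfrac{2S_N}{\tilde\mu} + \sqrt{\tfrac{2}{\tilde\mu}}\sum_{j=1}^N \sqrt{\delta_j/a_j}\,\|x^*-v_j\|$ for an appropriate non-coupled quantity $S_N$ matching the contents of the definition of $A_N$.

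The main obstacle is this final bilinear cross term, which depends linearly on the very iterates we wish to bound. I would resolve it by invoking the standard Gronwall-type recurrence of Schmidt--Le Roux--Bach \cite[Lemma~1]{inex_comput}: setting $U_N:=\max_{j\le N}\|x^*-v_j\|$ and $B:=\sqrt{2/\tilde\mu}\sum_{j=1}^N \sqrt{\delta_j/a_j}$, the displayed inequality gives $U_N^2 \le \tfrac{2S_N}{\tilde\mu} + B\,U_N$, hence $U_N \le B+\sqrt{2S_N/\tilde\mu+B^2}=A_N$. Substituting $\max_j\|x^*-v_j\|\le A_N$ back into the telescoped inequality produces both the functional estimate on $F(x_N)-F(x^*)$ (which in the case $r=\rho=0$ is precisely the stated bound, since the weak-convexity terms vanish) and a bound on $\sum_j \|x_j-y_j\|^2/a_j^2$. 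Weighted averaging of the latter sum against $1/a_j^2$ and invoking Lemma~\ref{lem: funct_grad}, in the form $\|\mathcal{G}_{1/\tilde\mu}(y_k)\|^2 \le 4\tilde\mu\varepsilon_k + 2\tilde\mu^2\|x_k-y_k\|^2$, converts the result into the min-prox-gradient estimate; the doubling from this conversion together with the $24\to 48$ factor already present explains the coefficient $96$ and the appearance of $3\varepsilon_i$ in the summand.
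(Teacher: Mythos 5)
Your proposal is correct and follows essentially the same route as the paper's proof: an inexact telescoping inequality carrying the errors $\varepsilon_k,\delta_k$, resolution of the bilinear cross term via the recurrence lemma of \cite{inex_comput}, and conversion to the prox-gradient bound through Lemma~\ref{lem: funct_grad}. The only cosmetic difference is that you justify the perturbed three-point inequality directly from strong convexity (bounding $\|z-z^\star\|\le\sqrt{2t\varepsilon}$ and perturbing Lemma~\ref{lem: scnx}), whereas the paper reaches the identical inequality via $\varepsilon$-subdifferential calculus; the two devices are equivalent here.
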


Note that with the choices $\varepsilon_i$, $\delta_i\sim \frac{1}{i^{3+q}}$, the quantity $A_N$ remains bounded. Consequently, in the setting $r=0$, the functional error $F(x_N)-F(x^*)$ is on the order of $\mathcal{O}(1/N^2)$.

\section*{Acknowledgements}
We thank the two anonymous referees for their meticulous reading of the manuscript. Their comments and suggestions greatly improved the quality and readability of the paper.
We also thank Damek Davis and Zaid Harchaoui for their insightful comments on an early draft of the paper.

\bibliographystyle{plain}
\bibliography{bibliography}

\def\cfac#1{\ifmmode\setbox7\hbox{$\accent"5E#1$}\else
  \setbox7\hbox{\accent"5E#1}\penalty 10000\relax\fi\raise 1\ht7
  \hbox{\lower1.15ex\hbox to 1\wd7{\hss\accent"13\hss}}\penalty 10000
  \hskip-1\wd7\penalty 10000\box7}
\begin{thebibliography}{10}

\bibitem{accsvrg}
Z.~Allen-Zhu.
\newblock Katyusha: The first direct acceleration of stochastic gradient
  methods.
\newblock {\em Preprint arXiv:1603.05953 (version 5)}, 2016.

\bibitem{kalman}
A.~Aravkin, J.V. Burke, L.~Ljung, A.~Lozano, and G.~Pilonetto.
\newblock Generalized {K}alman smoothing:modeling and algorithm.
\newblock {\em Preprint arXiv:1609.06369}, 2016.

\bibitem{AT_fast}
A.~Auslender and M.~Teboulle.
\newblock Interior gradient and proximal methods for convex and conic
  optimization.
\newblock {\em SIAM J. Optim.}, 16(3):697--725 (electronic), 2006.

\bibitem{beck}
A.~Beck and M.~Teboulle.
\newblock A fast iterative shrinkage-thresholding algorithm for linear inverse
  problems.
\newblock {\em SIAM J. Imaging Sci.}, 2(1):183--202, 2009.

\bibitem{compBP}
J.~Bolte and E.~Pauwels.
\newblock Majorization-minimization procedures and convergence of {SQP} methods
  for semi-algebraic and tame programs.
\newblock {\em Math. Oper. Res.}, 41(2):442--465, 2016.

\bibitem{Borwein-Zhu}
J.M. Borwein and Q.J. Zhu.
\newblock {\em Techniques of Variational Analysis}.
\newblock Springer Verlag, New York, 2005.

\bibitem{alg_o_not}
G.~Brassard and P.~Bratley.
\newblock {\em Fundamentals of algorithmics}.
\newblock Prentice Hall, Inc., Englewood Cliffs, NJ, 1996.

\bibitem{burke_com}
J.V. Burke.
\newblock Descent methods for composite nondifferentiable optimization
  problems.
\newblock {\em Math. Programming}, 33(3):260--279, 1985.

\bibitem{burke_exact}
J.V. Burke.
\newblock An exact penalization viewpoint of constrained optimization.
\newblock {\em SIAM J. Control Optim.}, 29(4):968--998, 1991.

\bibitem{iter_line_bur}
J.V. Burke, F.E. Curtis, H.~Wang, and J.~Wang.
\newblock Iterative reweighted linear least squares for exact penalty
  subproblems on product sets.
\newblock {\em SIAM J. Optim.}, 25(1):261--294, 2015.

\bibitem{quad_conv}
J.V. Burke and M.C. Ferris.
\newblock A {G}auss-{N}ewton method for convex composite optimization.
\newblock {\em Math. Programming}, 71(2, Ser. A):179--194, 1995.

\bibitem{KNITRO}
R.H. Byrd, J~Nocedal, and R.A. Waltz.
\newblock K{NITRO}: {A}n integrated package for nonlinear optimization.
\newblock In {\em Large-scale nonlinear optimization}, volume~83 of {\em
  Nonconvex Optim. Appl.}, pages 35--59. Springer, New York, 2006.

\bibitem{composite_cart}
C.~Cartis, N.I.M. Gould, and P.L. Toint.
\newblock On the evaluation complexity of composite function minimization with
  applications to nonconvex nonlinear programming.
\newblock {\em SIAM J. Optim.}, 21(4):1721--1739, 2011.

\bibitem{Clark85}
D.I. Clark.
\newblock The mathematical structure of {H}uber's {M}-estimator.
\newblock {\em SIAM journal on scientific and statistical computing},
  6(1):209--219, 1985.

\bibitem{CC}
T.F. Coleman and A.R. Conn.
\newblock Nonlinear programming via an exact penalty function: global analysis.
\newblock {\em Math. Programming}, 24(2):137--161, 1982.

\bibitem{DFO_book}
A.R. Conn, K.~Scheinberg, and L.N. Vicente.
\newblock {\em Introduction to derivative-free optimization}, volume~8 of {\em
  MPS/SIAM Series on Optimization}.
\newblock Society for Industrial and Applied Mathematics (SIAM), Philadelphia,
  PA; Mathematical Programming Society (MPS), Philadelphia, PA, 2009.

\bibitem{fill_grap}
A.~Daniilidis and J.~Malick.
\newblock Filling the gap between lower-{$C^1$} and lower-{$C^2$} functions.
\newblock {\em J. Convex Anal.}, 12(2):315--329, 2005.

\bibitem{saga}
A.~Defazio, F.~Bach, and S.~Lacoste-Julien.
\newblock Saga: A fast incremental gradient method with support for
  non-strongly convex composite objectives.
\newblock In {\em Advances in Neural Information Processing Systems}, pages
  1646--1654, 2014.

\bibitem{SAGA2}
A.~Defazio, F.~Bach, and S.~Lacoste-Julien.
\newblock {SAGA}: A fast incremental gradient method with support for
  non-strongly convex composite objectives.
\newblock In Z.~Ghahramani, M.~Welling, C.~Cortes, N.~D. Lawrence, and K.~Q.
  Weinberger, editors, {\em Advances in Neural Information Processing Systems
  27}, pages 1646--1654. Curran Associates, Inc., 2014.

\bibitem{finito}
Aaron Defazio, Justin Domke, and Tib{\'e}rio~S Caetano.
\newblock Finito: A faster, permutable incremental gradient method for big data
  problems.
\newblock In {\em ICML}, pages 1125--1133, 2014.

\bibitem{PG}
G.~Di~Pillo and L.~Grippo.
\newblock Exact penalty functions in constrained optimization.
\newblock {\em SIAM J. Control Optim.}, 27(6):1333--1360, 1989.

\bibitem{taylor}
D.~Drusvyatskiy, A.D. Ioffe, and A.S. Lewis.
\newblock Nonsmooth optimization using taylor-like models: error bounds,
  convergence, and termination criteria.
\newblock {\em Preprint arXiv:1610.03446}, 2016.

\bibitem{prox_error}
D.~Drusvyatskiy and A.S. Lewis.
\newblock Error bounds, quadratic growth, and linear convergence of proximal
  methods.
\newblock {\em To appear in Math. Oper. Res., arXiv:1602.06661}, 2016.

\bibitem{duchi_ruan_PR}
J.C. Duchi and F.~Ruan.
\newblock Solving (most) of a set of quadratic equalities: Composite
  optimization for robust phase retrieval.
\newblock {\em Preprint arXiv:1705.02356}, 2017.

\bibitem{duchi_ruan}
J.C. Duchi and F.~Ruan.
\newblock Stochastic methods for composite optimization problems.
\newblock {\em Preprint arXiv:1703.08570}, 2017.

\bibitem{hub_num}
R.~Dutter and P.J. Huber.
\newblock Numerical methods for the nonlinear robust regression problem.
\newblock {\em J. Statist. Comput. Simulation}, 13(2):79--113, 1981.

\bibitem{eremin}
I.I. Eremin.
\newblock The penalty method in convex programming.
\newblock {\em Cybernetics}, 3(4):53--56 (1971), 1967.

\bibitem{fletcher_back}
R.~Fletcher.
\newblock A model algorithm for composite nondifferentiable optimization
  problems.
\newblock {\em Math. Programming Stud.}, (17):67--76, 1982.
\newblock Nondifferential and variational techniques in optimization
  (Lexington, Ky., 1980).

\bibitem{frostig}
R.~Frostig, R.~Ge, S.M. Kakade, and A.~Sidford.
\newblock Un-regularizing: approximate proximal point and faster stochastic
  algorithms for empirical risk minimization.
\newblock In {\em Proceedings of the 32nd International Conference on Machine
  Learning (ICML)}, 2015.

\bibitem{ghadimi_lan}
S.~Ghadimi and G.~Lan.
\newblock Accelerated gradient methods for nonconvex nonlinear and stochastic
  programming.
\newblock {\em Math. Program.}, 156(1-2, Ser. A):59--99, 2016.

\bibitem{Gil_surv}
N.~Gillis.
\newblock The why and how of nonnegative matrix factorization.
\newblock In {\em Regularization, optimization, kernels, and support vector
  machines}, Chapman \& Hall/CRC Mach. Learn. Pattern Recogn. Ser., pages
  257--291. CRC Press, Boca Raton, FL, 2015.

\bibitem{gill_siam_news_views}
N.~Gillis.
\newblock Introduction to nonnegative matrix factorization.
\newblock {\em SIAG/OPT Views and News}, 25(1):7--16, 2017.

\bibitem{eps_subdif_calc}
J.-B. Hiriart-Urruty.
\newblock {$\varepsilon $}-subdifferential calculus.
\newblock In {\em Convex analysis and optimization ({L}ondon, 1980)}, volume~57
  of {\em Res. Notes in Math.}, pages 43--92. Pitman, Boston, Mass.-London,
  1982.

\bibitem{Hub}
P.~J. Huber.
\newblock {\em Robust Statistics}.
\newblock John Wiley and Sons, 2 edition, 2004.

\bibitem{svrg}
R.~Johnson and T.~Zhang.
\newblock Accelerating stochastic gradient descent using predictive variance
  reduction.
\newblock In {\em Proceedings of the 26th International Conference on Neural
  Information Processing Systems}, NIPS'13, pages 315--323, USA, 2013. Curran
  Associates Inc.

\bibitem{conjugategradient}
G.~Lan.
\newblock An optimal randomized incremental gradient method.
\newblock {\em arXiv:1507.02000}, 2015.

\bibitem{lev}
K.~Levenberg.
\newblock A method for the solution of certain non-linear problems in least
  squares.
\newblock {\em Quart. Appl. Math.}, 2:164--168, 1944.

\bibitem{prox}
A.S. Lewis and S.J. Wright.
\newblock A proximal method for composite minimization.
\newblock {\em Math. Program.}, pages 1--46, 2015.

\bibitem{LiW98}
W.~Li and J.~Swetits.
\newblock The linear l1 estimator and the huber m-estimator.
\newblock {\em SIAM Journal on Optimization}, 8(2):457--475, 1998.

\bibitem{catalyst}
H.~Lin, J.~Mairal, and Z.~Harchaoui.
\newblock A universal catalyst for first-order optimization.
\newblock In {\em Advances in Neural Information Processing Systems}, pages
  3366--3374, 2015.

\bibitem{quickening}
H.~Lin, J.~Mairal, and Z.~Harchaoui.
\newblock {Q}uicke{N}ing: A generic {Q}uasi-{N}ewton algorithm for faster
  gradient-based optimization.
\newblock {\em Preprint arXiv:1610.00960}, 2016.

\bibitem{miso}
J.~Mairal.
\newblock Incremental majorization-minimization optimization with application
  to large-scale machine learning.
\newblock {\em SIAM Journal on Optimization}, 25(2):829--855, 2015.

\bibitem{marq}
D.W. Marquardt.
\newblock An algorithm for least-squares estimation of nonlinear parameters.
\newblock {\em J. Soc. Indust. Appl. Math.}, 11:431--441, 1963.

\bibitem{mord1}
B.S. Mordukhovich.
\newblock {\em Variational analysis and generalized differentiation. {I}},
  volume 330 of {\em Grundlehren der Mathematischen Wissenschaften [Fundamental
  Principles of Mathematical Sciences]}.
\newblock Springer-Verlag, Berlin, 2006.
\newblock Basic theory.

\bibitem{more_lev_marq}
J.J. Mor{\'e}.
\newblock The {L}evenberg-{M}arquardt algorithm: implementation and theory.
\newblock In {\em Numerical analysis ({P}roc. 7th {B}iennial {C}onf., {U}niv.
  {D}undee, {D}undee, 1977)}, pages 105--116. Lecture Notes in Math., Vol. 630.
  Springer, Berlin, 1978.

\bibitem{LAD1}
S.C. Narula and J.F. Wellington.
\newblock The minimum sum of absolute errors regression: a state of the art
  survey.
\newblock {\em Internat. Statist. Rev.}, 50(3):317--326, 1982.

\bibitem{complexity}
A.S. Nemirovsky and D.B. Yudin.
\newblock {\em Problem complexity and method efficiency in optimization}.
\newblock A Wiley-Interscience Publication. John Wiley \& Sons, Inc., New York,
  1983.
\newblock Translated from the Russian and with a preface by E. R. Dawson,
  Wiley-Interscience Series in Discrete Mathematics.

\bibitem{intro_lect}
Y.~Nesterov.
\newblock {\em Introductory lectures on convex optimization}, volume~87 of {\em
  Applied Optimization}.
\newblock Kluwer Academic Publishers, Boston, MA, 2004.
\newblock A basic course.

\bibitem{nest_optima}
Y.~Nesterov.
\newblock How to make the gradients small.
\newblock {\em {OPTIMA}, MPS Newsletter}, (88):10--11, 2012.

\bibitem{nest_orig}
Yu. Nesterov.
\newblock A method for solving the convex programming problem with convergence
  rate {$O(1/k^{2})$}.
\newblock {\em Dokl. Akad. Nauk SSSR}, 269(3):543--547, 1983.

\bibitem{nest_88}
Yu. Nesterov.
\newblock On an approach to the construction of optimal methods of minimization
  of smooth convex functions.
\newblock {\em Ekonom. i. Mat. Metody}, 24:509--517, 1988.

\bibitem{smooth_min_nonsmooth}
Yu. Nesterov.
\newblock Smooth minimization of non-smooth functions.
\newblock {\em Math. Program.}, 103(1, Ser. A):127--152, 2005.

\bibitem{nest_GN}
Yu. Nesterov.
\newblock Modified {G}auss-{N}ewton scheme with worst case guarantees for
  global performance.
\newblock {\em Optim. Methods Softw.}, 22(3):469--483, 2007.

\bibitem{nest_conv_comp}
Yu. Nesterov.
\newblock Gradient methods for minimizing composite functions.
\newblock {\em Math. Program.}, 140(1, Ser. B):125--161, 2013.

\bibitem{NW}
J.~Nocedal and S.J. Wright.
\newblock {\em Numerical optimization}.
\newblock Springer Series in Operations Research and Financial Engineering.
  Springer, New York, second edition, 2006.

\bibitem{glob_conv}
E.~Pauwels.
\newblock The value function approach to convergence analysis in composite
  optimization.
\newblock {\em Oper. Res. Lett.}, 44(6):790--795, 2016.

\bibitem{prox_reg}
R.A. Poliquin and R.T. Rockafellar.
\newblock Prox-regular functions in variational analysis.
\newblock {\em Trans. Amer. Math. Soc.}, 348:1805--1838, 1996.

\bibitem{powell_paper}
M.J.D. Powell.
\newblock General algorithms for discrete nonlinear approximation calculations.
\newblock In {\em Approximation theory, {IV} ({C}ollege {S}tation, {T}ex.,
  1983)}, pages 187--218. Academic Press, New York, 1983.

\bibitem{pow_glob}
M.J.D. Powell.
\newblock On the global convergence of trust region algorithms for
  unconstrained minimization.
\newblock {\em Math. Programming}, 29(3):297--303, 1984.

\bibitem{rock}
R.T. Rockafellar.
\newblock {\em Convex Analysis}.
\newblock Princeton University Press, 1970.

\bibitem{RW98}
R.T. Rockafellar and R.J-B. Wets.
\newblock {\em Variational {A}nalysis}.
\newblock Grundlehren der mathematischen Wissenschaften, Vol 317, Springer,
  Berlin, 1998.

\bibitem{inex_comput}
M.~Schmidt, Nicolas L.R., and Francis R.B.
\newblock Convergence rates of inexact proximal-gradient methods for convex
  optimization.
\newblock In J.~Shawe-Taylor, R.~S. Zemel, P.~L. Bartlett, F.~Pereira, and
  K.~Q. Weinberger, editors, {\em Advances in Neural Information Processing
  Systems 24}, pages 1458--1466. Curran Associates, Inc., 2011.

\bibitem{sag}
M.~Schmidt, N.~Le Roux, and F.~Bach.
\newblock Minimizing finite sums with the stochastic average gradient.
\newblock {\em arXiv:1309.2388}, 2013.

\bibitem{sdca}
S.~Shalev-Shwartz and T.~Zhang.
\newblock Proximal stochastic dual coordinate ascent.
\newblock {\em arXiv:1211.2717}, 2012.

\bibitem{accsdca}
S.~Shalev-Shwartz and T.~Zhang.
\newblock Accelerated proximal stochastic dual coordinate ascent for
  regularized loss minimization.
\newblock {\em Mathematical Programming}, 2015.

\bibitem{LAD2}
E.~Siemsen and K.A. Bollen.
\newblock Least absolute deviation estimation in structural equation modeling.
\newblock {\em Sociol. Methods Res.}, 36(2):227--265, 2007.

\bibitem{tseng_f_order}
P.~Tseng.
\newblock On accelerated proximal gradient methods for convex-concave
  optimization.
\newblock Technical report, University of Washington, 2008.

\bibitem{villa}
S.~Villa, S.~Salzo, L.~Baldassarre, and A.~Verri.
\newblock Accelerated and inexact forward-backward algorithms.
\newblock {\em SIAM J. Optim.}, 23(3):1607--1633, 2013.

\bibitem{pounders}
S.M. Wild.
\newblock {\em Solving Derivative-Free Nonlinear Least Squares Problems with
  {POUNDERS}}.
\newblock 2014.
\newblock Argonne National Lab.

\bibitem{steph_conv_comp}
S.J. Wright.
\newblock Convergence of an inexact algorithm for composite nonsmooth
  optimization.
\newblock {\em IMA J. Numer. Anal.}, 10(3):299--321, 1990.

\bibitem{prox_SVRG}
L.~Xiao and T.~Zhang.
\newblock A proximal stochastic gradient method with progressive variance
  reduction.
\newblock {\em SIAM J. Optim.}, 24(4):2057--2075, 2014.

\bibitem{yu_super}
Y.~Yuan.
\newblock On the superlinear convergence of a trust region algorithm for
  nonsmooth optimization.
\newblock {\em Math. Programming}, 31(3):269--285, 1985.

\end{thebibliography}

\appendix

\section{Proofs of Lemmas~\ref{lem:expl_grad_comp}, \ref{lem:norm_comp} and Theorems \ref{thm: dual_apla}, \ref{thm: funct_apla}}\label{sec:proofs_append}
In this section, we prove Lemmas~\ref{lem:expl_grad_comp}, \ref{lem:norm_comp} and Theorems \ref{thm: dual_apla}, \ref{thm: funct_apla} in order.

\begin{proof}[Proof of Lemma~\ref{lem:expl_grad_comp}]
Observe for any $t>0$ and any proper, closed, convex function $f$, we have
\begin{equation} \label{eqn:main_prop_instup_lem} 
\prox_{(tf)^{\star}}(w)= \argmin_{z}~\{tf^{\star}(z/t)+\tfrac{1}{2}\|z-w\|^2\}=t\cdot \prox_{f^{\star}/t}(w/t),
\end{equation}
where the first equation follows from the definition of the proximal map and from  \cite[Theorem 16.1]{rock}. From \cite[Theorem 31.5]{rock}, we obtain $\prox_{th^{\star}}(w)=w-\prox_{(th^{\star})^{\star}}(w)$, while an application of \eqref{eqn:main_prop_instup_lem}  with $f=h^{\star}$ then directly implies \eqref{dual_gradient_formula}.

The fact that the gradient map $\nabla \Big(G^{\star}\circ A^* - \ip{b, \cdot}\Big)$ is Lipschitz with constant $t\|\nabla c(x)\|^2_{\textrm{op}}$ follows directly from $\nabla G^{\star}$ being $t$-Lipschitz continuous. The chain rule, in turn, yields 
$$\nabla \Big(G^{\star}\circ A^* - \ip{b, \cdot}\Big)(w)=A\nabla G^{\star}(A^*w)-b.$$
Thus we must analyze the expression
$\nabla G^{\star}(z)=\nabla (g+\tfrac{1}{2t}\|\cdot-x\|^2)^{\star}(z)$. Notice that the conjugate of  $\tfrac{1}{2t}\|\cdot-x\|^2$ is the function $\frac{t}{2}\|\cdot\|^2+\langle \cdot,x \rangle$. Hence, using \cite[Theorem 16.4]{rock}  we deduce 
$$(g+\tfrac{1}{2t}\|\cdot-x\|^2)^{\star}(z)=\inf_{y}~ \{g^{\star}(y)+\tfrac{t}{2}\|z-y\|^2+\langle z-y,x \rangle\}=(g^{\star})_{1/t}(z+x/t)-\tfrac{1}{2t}\|x\|^2,$$
where the last equation follows from completing the square.
We thus conclude
\begin{equation*} 
\nabla G^{\star}(z)=\nabla (g^{\star})_{1/t}(z+x/t)=t\cdot\prox_{(g^{\star}/t)^{\star}}(z+x/t)=\prox_{tg}(x+tz),
\end{equation*}
where the second equality follows from Lemma~\ref{lem:lip_cont} and 
the third from \eqref{eqn:main_prop_instup_lem}. The expressions \eqref{eqn:dual_subdiff} and \eqref{eqn:grad_express} follow.
\end{proof}

\begin{proof}[Proof of Lemma~\ref{lem:norm_comp}]
	Observe $$\|h(y)-h(z)\|\leq \frac{1}{m}\sum_{i=1}^m |h_i(y_i)-h_i(z_i)|\leq \frac{{L}}{m}\sum_{i=1}^m \|y-z\|_1\leq  \frac{{L}}{\sqrt{m}}\|y-z\|,$$
	where the last equality follows from the $l_p$-norm comparison 
	$\|\cdot\|_1\leq \sqrt{m}\|\cdot\|_2$. This proves $\lip(h)\leq {L}/\sqrt{m}$. Next for any point $x$ observe 
	$$\|\nabla c(x)\|_{\textrm{op}}=\max_{v:\|v\|=1} \|\nabla c(x)v\|\leq \sqrt{\sum_{i=1}^m \|\nabla c_i(x)\|^2}\leq \sqrt{m}\max_{i=1,\ldots,m} \|\nabla c_i(x)\|$$
	By an analogous argument, we have 
	$$\|\nabla c(x)-\nabla c(z)\|_{\textrm{op}}\leq \sqrt{\sum_{i=1}^m \|\nabla c_i(x)-\nabla c_i(z)\|^2}\leq \beta\sqrt{m}\|x-z\|,$$
	and hence $\lip(\nabla c)\leq {\beta}\sqrt{m}$. Finally, suppose that each $h_i$ is $C^1$-smooth with ${L_h}$-Lipschitz gradient $\nabla h_i$.
	Observe then 
	$$\|\nabla h(y)-\nabla h(z)\|=\frac{1}{m}\sqrt{\sum^m_{i=1} |h_i'(y_i)-h_i'(z_i)|^2}\leq \frac{{L_h}}{m}\|y-z\|^2.$$ The result follows.
\end{proof}

\begin{proof}[Proof of Theorem~\ref{thm: dual_apla}]
The proof is a modification of the proof Theorem~\ref{thm: cnx}; as such, we skip some details.
For any point $w$, we successively deduce
\begin{align*}
F(x_k) &\le h \big ( \zeta_{k} + c(y_k) + \nabla c(y_k) (x_k -y_k) \big
  ) + g(x_k) + \frac{\mu}{2} \norm{x_k-y_k}^2  + L \cdot \varepsilon_k
  \\
&\le \Big(h \big ( \zeta_{k} + c(y_k) + \nabla c(y_k) (x_k -y_k) \big
  ) + g(x_k) + \frac{\tilde{\mu}}{2} \norm{x_k-y_k}^2\Big) -
  \frac{\tilde{\mu}-\mu}{2} \norm{x_k-y_k}^2 + L \cdot \varepsilon_k
  \\
&\le h \big ( \zeta_{k} + c(y_k) + \nabla c(y_k) (w -y_k) \big
  ) + g(w)\\
&\qquad  + \frac{\tilde{\mu}}{2} \left ( \norm{w-y_k}^2 -
  \norm{w-x_k}^2 \right ) -
  \frac{\tilde{\mu}-\mu}{2} \norm{x_k-y_k}^2 + L \cdot \varepsilon_k \\
& \le h \big ( c(y_k) + \nabla c(y_k) (w -y_k) \big
  ) + g(w) \\
&\qquad + \frac{\tilde{\mu}}{2} \left ( \norm{w-y_k}^2 - \norm{w-x_k}^2 \right )-
  \frac{\tilde{\mu}-\mu}{2} \norm{x_k-y_k}^2 + 2L \cdot \varepsilon_k.
\end{align*}
Setting $w:=a_kv_k+(1-a_k)x_{k-1}$ and noting the equality $w-y_k=a_k(v_k-v_{k-1})$ then yields
\begin{align*}
F(x_k)\leq& h(c(y_k)+a_k\nabla c(y_k)(v_k-v_{k-1}))+a_kg(v_k)+(1-a_k)g(x_{k-1})\\
&+\frac{\tilde{\mu}}{2} \left ( \norm{a_k(v_k-v_{k-1})}^2 - \norm{w-x_k}^2 \right )-
  \frac{\tilde{\mu}-\mu}{2} \norm{x_k-y_k}^2 + 2L \cdot \varepsilon_k.
\end{align*}
Upper bounding $-\|w-x_k\|^2$ by zero and using Lipschitz continuity of $h$ we obtain for any point $x$ the inequalities 
\begin{align*}
F(x_k)&\leq  a_k\Big(\frac{1}{a_k}h(\xi_k+c(y_k)+a_k\nabla c(y_k)(v_k-v_{k-1}))+g(v_k)\Big)+(1-a_k)g(x_{k-1})\\
&+\frac{\tilde{\mu} a_k^2}{2}  \norm{v_k-v_{k-1}}^2 -
  \frac{\tilde{\mu}-\mu}{2} \norm{x_k-y_k}^2 + L\cdot\delta_k+2L \cdot \varepsilon_k.\\
&\leq  a_k\Big(\frac{1}{a_k}h(\xi_k+c(y_k)+a_k\nabla c(y_k)(x-v_{k-1}))+g(x)+\frac{\tilde\mu a_k}{2}(\|x-v_{k-1}\|^2
-\|v_k-v_{k-1}\|^2\\
&-\|v_k-x\|^2)\Big)+(1-a_k)g(x_{k-1})+\frac{\tilde{\mu} a_k^2}{2}  \norm{v_k-v_{k-1}}^2 -
  \frac{\tilde{\mu}-\mu}{2} \norm{x_k-y_k}^2+L\delta_k+2L\varepsilon_k.\\
  &\leq  h(c(y_k)+a_k\nabla c(y_k)(x-v_{k-1}))+a_kg(x)+\frac{\tilde\mu a_k^2}{2}(\|x-v_{k-1}\|^2
-\|v_k-x\|^2)\\
&+(1-a_k)g(x_{k-1}) -
  \frac{\tilde{\mu}-\mu}{2} \norm{x_k-y_k}^2+2L\delta_k+2L\varepsilon_k.
\end{align*}
Define $\hat x:=a_kx+(1-a_k)x_{k-1}$ and note $a_k(x-v_{k-1})=\hat x-y_k$. The same argument as that of \eqref{eqn:fin_step} yields
\begin{align*}
h(c(y_k)+\nabla c(y_k)(\hat x-y_{k}))\leq &
a_k h(c(x))+(1-a_k)h(c(x_{k-1}))+\\
&\rho a_k(1-a_k)\|x-x_{k-1}\|^2+\frac{r a_k^2}{2}\|x-v_{k-1}\|^2.
\end{align*}
Hence upper bounding $1-a_k\leq 1$  we deduce
\begin{align*}
F(x_k)\leq &a_k F(x)+(1-a_k)F(x_{k-1})+\frac{\tilde \mu a_k^2}{2}(\|x-v_{k-1}\|^2-\|x-v_k\|^2)\\
&- \frac{\tilde \mu-\mu}{2}\|y_k-x_k\|^2+\rho a_k\|x-x_{k-1}\|^2+\frac{ra_k^2}{2}\|x-v_{k-1}\|^2+ 2L(\delta_k+\varepsilon_k).
\end{align*} 
This expression is identical to that of \eqref{eqn: bound_tele} except for the error term $2L(\delta_k+\varepsilon_k)$. The same argument as in the proof of Theorem~\ref{thm: cnx} then shows
\begin{align*}
\frac{F(x_N)-F(x^*)}{a_N^2} + \frac{\tilde{\mu}}{2} \|x^*-v_N\|^2 \le
 & \frac{\tilde{\mu} }{2}
\norm{x^*-v_0}^2
 + \rho M^2  \left (
\sum_{j=1}^N \frac{1}{a_j} \right ) \\
&+\frac{NrM^2}{2}-
\frac{\tilde{\mu}-\mu}{2} \sum_{j=1}^N
\frac{\norm{x_j-y_j}^2}{a_j^2}+2L\sum_{j=1}^N\frac{\varepsilon_j+\delta_j}{a_j^2}. 
\end{align*}
Hence appealing to Lemma~\ref{lem:pass_tostat}, we deduce 
\begin{align*}
\sum_{j=1}^N &\frac{\|\mathcal{G}_{1/\tilde\mu}(y_j)\|^2}{a_j^2}\leq 8 L\tilde{\mu}\sum_{j=1}^N  \frac{\varepsilon_j}{a_j^2} + 2 \sum_{j=1}^N  \frac{\norm{\tilde{\mu}(x_j-y_j)}^2}{a_j^2}\\
&\leq 8 L\tilde{\mu}\sum_{j=1}^N  \frac{\varepsilon_j}{a_j^2}+\frac{4\tilde{\mu}^2}{\tilde\mu-\mu}\left(\frac{\tilde\mu}{2}\|x^*-v_0\|^2+\frac{NM^2(r+\frac{\rho}{2}(N+3))}{2}+2L\sum_{j=1}^N\frac{\varepsilon_j+\delta_j}{a_j^2}\right).
\end{align*}
Therefore
\begin{align*}
\min_{i=1,\ldots,N} &\|\mathcal{G}_{1/\tilde\mu}(y_j)\|^2\leq \frac{8\cdot 24 L\tilde{\mu}\sum_{j=1}^N  \frac{\varepsilon_j}{a_j^2}}{N(N+1)(2N+1)}\\
&+
\frac{48\tilde\mu^2}{\tilde{\mu}-\mu}\left( \frac{\|x^*-v_0\|^2}{N(N+1)(2N+1)}
+\frac{M^2(r+\frac{\rho}{2}(N+3))}{(N+1)(2N+1)} + \frac{4L\sum_{j=1}^N\frac{\varepsilon_j+\delta_j}{a_j^2}}{N(N+1)(2N+1)}\right)
\end{align*}
Combining the first and fourth terms and using the inequality $\tilde\mu\geq \mu$ yields the claimed efficiency estimate on $\|\mathcal{G}_{1/\tilde\mu}(y_j)\|^2$.
Finally, the claimed efficiency estimate on the functional error $F(x_N)-F^*$ in the setting $r=0$ follows by the same reasoning as in 
Theorem~\ref{thm: cnx}.
%
\end{proof}

We  next prove Theorem~\ref{thm: funct_apla}. To this end, we will need the following lemma.

\begin{lemma}[Lemma 1 in \cite{inex_comput}]
Suppose the following recurrence relation is satisfied
\[ d_k^2 \le d_0^2 + c_k + \sum_{i=1}^k
\beta_i d_i \]
for some sequences $d_i,\beta_i \ge 0$ and an increasing sequence $c_i\geq 0$. Then the inequality holds:
\[ d_k \le A_k:= \frac{1}{2} \sum_{i=1}^k \beta_i + \left (
  d_0^2 + c_k + \left ( \frac{1}{2} \sum_{i=1}^k \beta_i   \right )^{2}
\right )^{1/2}. \]
Moreover since the terms on the right-hand side increase in
$k$, we also conclude for any $k \le N$ the inequality
 $d_k \le A_N$.
\label{lem: recurrence} 
\end{lemma}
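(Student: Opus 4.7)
The plan is to argue via a quadratic-majorization argument driven by a single well-chosen auxiliary quantity. Observe first that $A_k$ is precisely the nonnegative root of the quadratic equation $A^2 - B_k A - (d_0^2 + c_k) = 0$, where $B_k := \sum_{i=1}^k \beta_i$. Equivalently, $A_k$ is characterized by the identity $A_k^2 = d_0^2 + c_k + B_k A_k$. Any nonnegative number $u$ satisfying the quadratic inequality $u^2 \le d_0^2 + c_k + B_k u$ must therefore satisfy $u \le A_k$, by the quadratic formula. The strategy is to exhibit such a $u$ that dominates $d_k$.

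To that end, define the running maximum $u_k := \max_{0 \le i \le k} d_i$ and pick an index $j \in \{0, 1, \ldots, k\}$ attaining it. If $j = 0$, then $u_k = d_0$ and the trivial bound $d_0 \le \sqrt{d_0^2 + c_k} \le A_k$ (using $c_k, B_k \ge 0$) finishes the argument. Otherwise $j \ge 1$, and applying the hypothesis at index $j$, combined with the monotonicity $c_j \le c_k$ and the pointwise bound $d_i \le u_k$ for every $i \le j$, gives
\[
u_k^2 = d_j^2 \;\le\; d_0^2 + c_j + \sum_{i=1}^j \beta_i d_i \;\le\; d_0^2 + c_k + u_k \sum_{i=1}^j \beta_i \;\le\; d_0^2 + c_k + B_k u_k.
\]
Thus $u_k$ satisfies the quadratic inequality above, and the opening observation yields $u_k \le A_k$. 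Since $d_k \le u_k$, this is the desired bound.

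For the final assertion, note that $B_k$ and $c_k$ are both nondecreasing in $k$ by hypothesis, which immediately implies $A_k$ is nondecreasing in $k$ (each term defining $A_k$ only grows). Consequently $d_k \le A_k \le A_N$ for every $k \le N$.

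The only mildly delicate step is the choice to track $u_k = \max_{i \le k} d_i$ rather than $d_k$ itself; without this reduction one would need an inductive argument that must absorb the term $\beta_k d_k$ on the right-hand side, which makes the quadratic bookkeeping more awkward. With the running maximum, the argument becomes a one-line application of the quadratic formula and the rest is routine.
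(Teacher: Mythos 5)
Your proof is correct and takes essentially the same route as the source: the paper itself offers no proof, deferring to Lemma~1 of \cite{inex_comput}, whose argument is exactly your device of applying the hypothesis at an index attaining the running maximum $\max_{0\le i\le k} d_i$ and then solving the resulting quadratic inequality $u^2 \le d_0^2 + c_k + B_k u$ to conclude $u \le A_k$. The handling of the edge case $j=0$ and the monotonicity of $A_k$ in $k$ are both done correctly, so nothing further is needed.
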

The \emph{$\varepsilon$-subdifferential} of a function $f\colon\R^d\to\overline\R$ at a point  $\bar x$ is the set
$$\partial_\varepsilon f(\bar x):=\{v\in \R^d: f(x)-f(\bar x)  \ge \ip{v, x-\bar x} - \varepsilon \quad \text{ for all } x\in\R^d\}.$$
In particular, notice that $\bar x$ is an $\varepsilon$-approximate minimizer of $f$ if and only if the inclusion $0\in \partial_\varepsilon f(\bar x)$ holds.
For the purpose of analysis, it is useful to decompose the function $F_{t,
	\alpha}(z,y,v)$ into a sum
$$F_{t,
  \alpha}(z;y,v)=F_{\alpha}(z;y,v)+
\frac{1}{2t}\|z-v\|^2$$
The sum rule
for $\varepsilon$-subdifferentials \cite[Theorem 2.1]{eps_subdif_calc} guarantees $$\partial_\varepsilon
F_{t, \alpha}(\cdot; y, v) \subseteq \partial_\varepsilon F_{\alpha}(\cdot;y,v)
+ \partial_\varepsilon \left(\frac{1}{2t}\|\cdot -v\|^2\right).$$ 

\begin{lemma}
The $\varepsilon$-subdifferential $\partial_\varepsilon \left(\frac{1}{2t}\|\cdot -v\|^2\right)$ at a point $\bar z$ is the set
\begin{align*} 
\left \{  t^{-1}  (z-v+\gamma) \, : \,
\frac{1}{2t} \norm{\gamma}^2 \le \varepsilon \right \}.
\end{align*} 
\label{lem: eps_sub}
\end{lemma}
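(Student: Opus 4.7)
The plan is to unwind the definition of the $\varepsilon$-subdifferential and reduce the resulting inequality to an explicit quadratic minimization. Set $f(z):=\frac{1}{2t}\|z-v\|^2$. By definition, a vector $w$ lies in $\partial_\varepsilon f(\bar z)$ precisely when
$$\frac{1}{2t}\|z-v\|^2 \,\ge\, \frac{1}{2t}\|\bar z-v\|^2 + \langle w, z-\bar z\rangle - \varepsilon \qquad \textrm{for all } z\in\R^d.$$

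Next, I would substitute $u:=z-\bar z$ and expand $\|z-v\|^2=\|u+(\bar z-v)\|^2$. After cancellation, the inequality becomes a single quadratic condition in $u$:
$$\frac{1}{2t}\|u\|^2 - \big\langle w - t^{-1}(\bar z-v),\, u\big\rangle + \varepsilon \,\ge\, 0 \qquad \textrm{for all } u\in\R^d.$$
The left-hand side is a strongly convex quadratic in $u$, minimized at $u^\star = t\big(w - t^{-1}(\bar z-v)\big)$. Plugging this back in, the condition collapses to
$$\frac{t}{2}\,\big\|w - t^{-1}(\bar z-v)\big\|^2 \,\le\, \varepsilon.$$

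Finally, I would introduce the change of variable $\gamma := tw - (\bar z-v)$, equivalently $w = t^{-1}(\bar z - v + \gamma)$. Under this substitution, the inequality displayed above is exactly $\frac{1}{2t}\|\gamma\|^2 \le \varepsilon$, matching the claimed description of $\partial_\varepsilon f(\bar z)$; conversely, any $w$ of the stated form satisfies the original subgradient-type inequality by reversing the computation. There is no real obstacle here; the only thing to be careful about is the bookkeeping in the change of variables (notably the factor of $t$ relating the quadratic's minimum to the bound on $\|\gamma\|^2$), and noting that both directions of the equivalence fall out of the same explicit minimization.
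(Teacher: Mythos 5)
Your proof is correct and is exactly the ``completing the square'' argument that the paper gives (in one line) for this lemma: unwind the definition of the $\varepsilon$-subdifferential, minimize the resulting quadratic in $u=z-\bar z$, and change variables to $\gamma = tw-(\bar z - v)$. The bookkeeping checks out, so nothing further is needed.
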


\begin{proof} This follows  by completing the square in the definition of the $\varepsilon$-subdifferential. 
\end{proof}

In particular, suppose that $z^+$ is an $\varepsilon$-approximate minimizer of $F_{t, \alpha}(\cdot; y, v)$.
Then Lemma~\ref{lem: eps_sub} shows that there is a vector $\gamma$ satisfying $\norm{\gamma}^2
\le 2t \varepsilon$ and 
\begin{equation}
 t^{-1} (v-z^+-\gamma) \in \partial_\varepsilon F_{\alpha}(z^+;y,v) \label{eq: approxsub}.
\end{equation}
We are now ready to prove Theorem~\ref{thm: funct_apla}.

\begin{proof}[Proof of Theorem~\ref{thm: funct_apla}] 
	Let $x_k$, $y_k$, and $v_k$ be the iterates generated by Algorithm~\ref{alg: constant_inex}.
	We imitate the proof of Theorem~\ref{thm: cnx}, while taking into account inexactness. 
  First, inequality \eqref{eq: comp1} is still valid:
   $$F(x_k) \le F(x_k;y_k) +
   \tfrac{\mu}{2} \norm{x_k-y_k}^2.$$
    Since $x_k$ is an $\varepsilon_k$-approximate minimizer of the function $F(\cdot;y_k)=F_{1/\tilde{\mu},1}(\cdot; y_k,y_k)$, from \eqref{eq: approxsub}, we obtain a vector
    $\gamma_k$ satisfying $\norm{\gamma_k}^2 \le 2
    \varepsilon_k \tilde{\mu}^{-1}$ and $\tilde{\mu} ( y_k-x_k- \gamma_k) \in \partial_{\varepsilon_k} F(x_k;
    y_k)$. Consequently for
  all points $w$ we deduce the inequality
\begin{align}
F(x_k) 
&\le F(w;y_k) +
  \tfrac{\mu}{2} \norm{x_k-y_k}^2 
+ \ip{\tilde{\mu}(y_k-x_k-\gamma_k), x_k-w}  + \varepsilon_k 
\label{eq: aos_1}.
\end{align}
Set $w_k := a_k v_k + (1-a_k)
x_{k-1}$ and define  $c_k:=x_k-w_k$. Taking into account $w_k-y_k=a_k(v_k-v_{k-1})$, the previous inequality with $w=w_k$ becomes   
\begin{align}
F(x_k) &\le h(c(y_k) + a_k\nabla c(y_k) (v_k-v_{k-1}) ) + a_k
         g(v_k) + (1-a_k) g(x_{k-1}) + \tfrac{\mu}{2}
         \norm{x_k-y_k}^2 \nonumber\\
& + \tilde{\mu} \ip{y_k-x_k, c_k} - \tilde{\mu}\ip{\gamma_k,
 c_k} + \varepsilon_k. \label{eq: aos_2}
\end{align}
By completing the square, one can check
\begin{align*}
\tilde{\mu} \ip{y_k-x_k, c_k} &=\tfrac{\tilde{\mu}}{2}
\big ( \norm{a_kv_k-a_kv_{k-1}}^2 - \norm{x_k-y_k}^2 -
\norm{c_k}^2 \big ).
\end{align*}
Observe in addition
\[ -\tilde{\mu}\ip{\gamma_k, c_k} - \tfrac{\tilde{\mu}}{2} \norm{c_k}^2 =
-\tfrac{\tilde{\mu}}{2} \norm{ \gamma_k + c_k}^2 +
\tfrac{\tilde{\mu}}{2} \norm{\gamma_k}^2.\]
By combining the two equalities with \eqref{eq: aos_2} and dropping the
term $\tfrac{\tilde{\mu}}{2}
\norm{\gamma_k+c_k}^2$, we deduce
\begin{equation}\label{eqn:main_ineq_inexact}
 \begin{aligned}
F(x_k) &\le h(c(y_k) + a_k \nabla c(y_k)(v_k-v_{k-1}) ) + a_k g(v_k) +
         (1-a_k) g(x_{k-1})\\
& + \tfrac{\tilde{\mu} a_k^2}{2} \norm{v_k-v_{k-1}}^2 -
  \tfrac{\tilde{\mu}-\mu}{2} \norm{x_k-y_k}^2 + \varepsilon_k +
\tfrac{\tilde{\mu}}{2} \norm{\gamma_k}^2.
\end{aligned}
\end{equation}
Next recall that $v_k$ is a $\delta_k$-approximate minimizer of $F_{(\tilde{\mu} a_k)^{-1}, a_k}(\cdot; y_k, v_{k-1})$.
Using \eqref{eq: approxsub}, we obtain 
a vector $\eta_k$ satisfying $\norm{\eta_k}^2 \le \tfrac{2 \delta_k}{a_k
  \tilde{\mu}}$ and $a_k \tilde{\mu} (v_{k-1}-v_k-\eta_k)
\in \partial_{\delta_k} F_{a_k}(v_k;y_k,v_{k-1})$. Hence, we
conclude for all the points $x$ the inequality
\begin{equation}\label{eqn:mild_upper_d}
\begin{aligned}
 F_{a_k}(v_k;y_k,v_{k-1})&\leq   \frac{1}{a_k}h(c(y_k) + a_k \nabla c(y_k) (x-v_{k-1})+ g(x)\\
 &  + \tilde{\mu} 
  a_k \ip{v_{k-1}-v_k-\eta_k, v_k-x}  + \delta_k. 
\end{aligned}
\end{equation}
Completing the square, one can verify
 $$\ip{v_{k-1}-v_k, v_k-x}=\frac{1}{2}(\|x-v_{k-1}\|^2-\|x-v_k\|^2-\|v_k-v_{k-1}\|^2).$$
Hence combining this with \eqref{eqn:main_ineq_inexact} and \eqref{eqn:mild_upper_d}, while taking into account the inequalities $\|\gamma_k\|^2\leq 2\varepsilon_k\tilde{\mu}^{-1}$ and 
$\norm{\eta_k}^2 \le \tfrac{2 \delta_k}{a_k
	\tilde{\mu}}$, we deduce
\begin{align*}
F(x_k)\leq &h(c(y_k) + a_k \nabla c(y_k) (x-v_{k-1})+ a_kg(x)+(1-a_k) g(x_{k-1})\\
&+\tfrac{\tilde{\mu} a_k^2}{2} ( \norm{x-v_{k-1}}^2 - \norm{x-v_k}^2
)
+a_k\delta_k
-
\tfrac{\tilde{\mu}-\mu}{2} \norm{x_k-y_k}^2 + 2\varepsilon_k \\
&+a_k^{3/2}\sqrt{2\tilde{\mu}\delta_k} \cdot\|v_k-x\|.
\end{align*}

Following an analogous part of the proof of
Theorem~\ref{thm: cnx}, 
define now the point $\hat x=a_kx+(1-a_k)x_{k-1}$.
Taking into account $a_k(x-v_{k-1})=\hat x-y_k$, we conclude
\begin{equation*}
\begin{aligned}
h(c(y_k)+\nabla c(y_k)(\hat x-y_{k}))&\leq  (h\circ c)(\hat x) +\frac{r}{2}\|\hat x-y_k\|^2\\
&\leq a_kh(c(x))+(1-a_k)h(c(x_{k-1}))\\
&+\rho a_k(1-a_k)\|x-x_{k-1}\|^2+\frac{ra_k^2}{2}\|x-v_{k-1}\|^2.
\end{aligned}
\end{equation*}
Thus we obtain
\begin{align*}
F(x_k)\leq &a_kF(x)+(1-a_k)F(x_{k-1})+\rho a_k\|x-x_{k-1}\|^2+\frac{ra_k^2}{2}\|x-v_{k-1}\|^2\\
&+\tfrac{\tilde{\mu} a_k^2}{2} ( \norm{x-v_{k-1}}^2 - \norm{x-v_k}^2
)
+a_k\delta_k
-
\tfrac{\tilde{\mu}-\mu}{2} \norm{x_k-y_k}^2 + 2\varepsilon_k \\
&+a_k^{3/2}\sqrt{2\tilde{\mu}\delta_k} \cdot\|v_k-x\|.
\end{align*}
As in the proof of Theorem~\ref{thm: cnx}, setting $x=x^*$, we deduce

\begin{align*}
\frac{F(x_{N})-F^*}{a_N^2}+\frac{\tilde \mu}{2}\|x^*-v_N\|^2\leq &\frac{\tilde\mu}{2}\|x^*-v_0\|^2+\rho M^2\sum_{i=1}^N\frac{1}{a_i} +\frac{NrM^2}{2}+\sum^N_{i=1}\frac{\delta_i}{a_i}\\
&-\frac{\tilde\mu-\mu}{2}\sum_{i=1}^N\frac{\|x_i-y_i\|^2}{a_i^2}+2\sum_{i=1}^N\frac{\varepsilon_i}{a_i^2}+\sqrt{2\tilde\mu}\sum_{i=1}^N \|x^*-v_{i}\|\cdot\sqrt{\frac{\delta_i}{a_i}}.
\end{align*}
In particular, we have 
\begin{equation}\label{eqn:key_step-eqn}
\begin{aligned}
\frac{\tilde\mu-\mu}{2}\sum_{i=1}^N\frac{\|x_i-y_i\|^2}{a_i^2}\leq &\frac{\tilde\mu}{2}\|x^*-v_0\|^2+\frac{\rho M^2N(N+3)}{4} +\frac{NrM^2}{2}+\sum^N_{i=1}\frac{\delta_i}{a_i}\\
&+2\sum_{i=1}^N\frac{\varepsilon_i}{a_i^2}+\sqrt{2\tilde\mu}\sum_{i=1}^N \|x^*-v_{i}\|\cdot\sqrt{\frac{\delta_i}{a_i}}.
\end{aligned}
\end{equation}
and
\begin{align*}
\frac{\tilde \mu}{2}\|x^*-v_N\|^2\leq &\frac{\tilde\mu}{2}\|x^*-v_0\|^2+\frac{\rho M^2N(N+3)}{4} +\frac{NrM^2}{2}+\sum^N_{i=1}\frac{\delta_i}{a_i}\\
&+2\sum_{i=1}^N\frac{\varepsilon_i}{a_i^2}+\sqrt{2\tilde\mu}\sum_{i=1}^N \|x^*-v_{i}\|\cdot\sqrt{\frac{\delta_i}{a_i}}.
\end{align*}
Appealing to Lemma~\ref{lem: recurrence} with $d_k=\|x^*-v_k\|$, we conclude
$\|x^*-v_N\|\leq A_N$
for the constant
\begin{align*}
 A_N:=&\sqrt{\frac{2}{\tilde\mu}}\sum_{i=1}^N\sqrt{\frac{\delta_i}{a_i}}+\\
 &+\left(\|x^*-v_0\|^2+\frac{ M^2N(r+\frac{\rho}{2}(N+3))}{\tilde\mu}+\frac{2}{\tilde\mu}\sum^N_{i=1}\frac{\delta_i}{a_i}+\frac{4}{\tilde\mu}\sum_{i=1}^N\frac{\varepsilon_i}{a_i^2}+\frac{2}{\mu}\left(\sum_{i=1}^N \sqrt{\frac{\delta_i}{a_i}}\right)^2\right)^{1/2}.
 \end{align*}
 Finally, combining  inequality \eqref{eqn:key_step-eqn} with Lemma~\ref{lem: funct_grad} we deduce
\begin{align*}
\frac{\tilde{\mu}-\mu}{2}\sum^N_{i=1}\frac{\|\mathcal{G}_{1/\tilde{\mu}}(y_i)\|^2}{a_i^2}&\leq 2\tilde{\mu}(\tilde{\mu}-\mu)\sum_{i=1}^N \frac{\varepsilon_i}{a_i^2}+2\tilde{\mu}^2\Big(\frac{\tilde\mu}{2}\|x^*-v_0\|^2+\frac{\rho M^2N(N+3)}{4} +\frac{NrM^2}{2}+\\
&+\sum^N_{i=1}\frac{\delta_i}{a_i}
+2\sum_{i=1}^N\frac{\varepsilon_i}{a_i^2}+A_N\sqrt{2\tilde\mu}\sum_{i=1}^N \sqrt{\frac{\delta_i}{a_i}}\Big).
\end{align*}
Hence 
\begin{align*}
\min_{i=1,\ldots,N} \|\mathcal{G}_{1/\tilde{\mu}}(y_i)\|^2\leq \frac{96\tilde{\mu}\sum_{i=1}^N \frac{\varepsilon_i}{a_i^2}}{N(N+1)(2N+1)}+ &\frac{96\tilde{\mu}^2}{\tilde\mu-\mu}\Big( \frac{\tilde\mu\|x^*-v_0\|^2}{2N(N+1)(2N+1)}+\frac{M^2(r+\frac{\rho}{2}(N+3))}{2(N+1)(2N+1)}+
\\
&+\frac{\sum_{i=1}^N(\frac{\delta_ia_i+2\varepsilon_i}{a_i^2})
+A_N\sqrt{2\tilde\mu}\sum_{i=1}^N \sqrt{\frac{\delta_i}{a_i}}}{N(N+1)(2N+1)}\Big).
\end{align*}
Combining the first and the fourth terms, the result follows. The efficiency estimate on $F(x_N)-F^*$ in the setting $r=0$ follows by the same argument as in the proof of Theorem~\ref{thm: cnx}.
\end{proof}

\section{Backtracking}\label{sec:app_auxil}
In this section, we present a variant of Algorithm~\ref{alg: constant} where the constants $L$ and $\beta$ are unknown. The scheme is recorded as Algorithm~\ref{alg: acc_prox_linear_back} and  
relies on a backtracking line-search, stated in Algorithm~\ref{alg: backtrack}. 

{\LinesNotNumbered
	\begin{algorithm}[h!]
		\SetKw{Null}{NULL}
		\SetKw{Return}{return}
		\Initialize{A point $y$ and real numbers $\eta, \alpha \in
			(0,1)$ and $t > 0$. }
		\While { {\footnotesize $F(S_{\alpha t}(y)) >  F_t( S_{\alpha t}(y))  $} } {
			$t \leftarrow \eta t$  
		}
		Set $\tilde{\mu}= \tfrac{1}{\alpha t}$ and $x = S_{\alpha t}(y)$\\
		\Return $\tilde{\mu}, t, x$\;
		\caption{Backtracking$(\eta,\alpha,t,y)$}
		\label{alg: backtrack}
\end{algorithm}}

{\LinesNotNumbered
	\begin{algorithm}[h!]
		\SetKw{Null}{NULL}
		\SetKw{Return}{return}
		\Initialize{Fix two points $x_0, v_0 \in \text{dom} \,
			g$ and real numbers $t_0 > 0$ and $\eta, \alpha \in (0,1)$.}
		{\bf Step k:} ($k\geq 1)$ Compute
		\begin{align*}
			a_k &= \tfrac{2}{k+1}\\
			y_k &= a_k v_{k-1} + (1-a_k) x_{k-1}\\
			(\tilde{\mu}_k, t_k, x_k) &= \text{Backtracking}(\eta, \alpha, t_{k-1}, y_k)\\
			v_k &= S_{\tfrac{1}{ \tilde{\mu}_k a_k },\, a_k}(y_k, v_{k-1}) 
		\end{align*}
		\caption{Accelerated prox-linear method with backtracking}
		\label{alg: acc_prox_linear_back}
\end{algorithm}}

The backtracking procedure completes after only logarithmically many iterations.
\begin{lemma}[Termination of backtracking line search] 
	Algorithm~\ref{alg: backtrack} on input $(\eta, \alpha,t,y)$ terminates after at most 
	$ 1+\left \lceil \frac{\log(t \mu)}{\log (\eta^{-1})} \right \rceil $
	evaluations of $S_{\alpha\, \boldsymbol{\cdot}}(y)$. 
\end{lemma}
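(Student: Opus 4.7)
The plan is to reduce termination of the backtracking loop to the descent inequality in Lemma~\ref{lem:upper_lower}. The key observation is that $S_{\alpha t}(y)$ is the minimizer of the strongly convex model $F_{\alpha t}(\,\cdot\,; y)$, and therefore the exit test $F(S_{\alpha t}(y)) \le F_{\alpha t}(S_{\alpha t}(y); y)$ is automatically satisfied as soon as $F_{\alpha t}(\,\cdot\,;y)$ is a global upper model of $F$.

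To make this precise, first I would invoke Lemma~\ref{lem:upper_lower}, which gives $F(z) - F(z;y) \le \tfrac{\mu}{2}\|z-y\|^2$ for every $z,y\in\dom g$. In particular, whenever $\alpha t \le \mu^{-1}$, the inequality $\tfrac{\mu}{2} \le \tfrac{1}{2\alpha t}$ holds and hence
\[
F(z) \;\le\; F(z;y) + \frac{1}{2\alpha t}\|z-y\|^2 \;=\; F_{\alpha t}(z;y) \qquad \textrm{for every } z.
\]
Specializing to $z = S_{\alpha t}(y)$ shows that the while-loop test fails and the algorithm exits.

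Next I would count the number of reductions. After $k$ executions of the update $t \leftarrow \eta t$, the current stepsize is $\eta^k t$, where $t$ is the initial input. The loop is therefore guaranteed to terminate as soon as $\eta^k t \le (\alpha \mu)^{-1}$, equivalently
\[
k \;\ge\; \frac{\log(\alpha t \mu)}{\log(\eta^{-1})}.
\]
Thus the number of trial values of $t$ tested is at most $\lceil \log(\alpha t \mu)/\log(\eta^{-1}) \rceil$ ``unsuccessful'' evaluations followed by one ``successful'' evaluation of $S_{\alpha t}(y)$ (with the value of $S_{\alpha t}(y)$ in the final iterate reused to set $x$). Since $\alpha<1$ gives $\log(\alpha t \mu) \le \log(t\mu)$, the total count is bounded by $1 + \lceil \log(t\mu)/\log(\eta^{-1}) \rceil$, as claimed.

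The only subtlety is the edge case $t\mu \le 1$: here the bound $\log(t\mu)/\log(\eta^{-1})$ is nonpositive, but the stated formula already includes the $+1$ slack that accounts for the single initial evaluation, so no separate argument is required. No step here is technically hard; the mild obstacle is simply making sure the bookkeeping of ``evaluations of $S_{\alpha\cdot}(y)$'' in the loop matches the form of the stated bound, which the inequality $\alpha<1$ absorbs cleanly.
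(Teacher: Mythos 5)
Your overall strategy is the same as the paper's (the loop must exit once the quadratic model becomes a global upper bound for $F$, and the number of reductions is then a geometric count), but there is a genuine misstep in how you identify the exit test. The while-loop in Algorithm~\ref{alg: backtrack} continues while $F(S_{\alpha t}(y)) > F_t(S_{\alpha t}(y);y)$, i.e. the trial point $S_{\alpha t}(y)$ is compared against the model $F_t(\cdot;y)$ with penalty $\tfrac{1}{2t}$, \emph{not} against $F_{\alpha t}(\cdot;y)$ with penalty $\tfrac{1}{2\alpha t}$. Since $\alpha\in(0,1)$ gives $F_t(z;y)\le F_{\alpha t}(z;y)$, knowing that $F_{\alpha t}(\cdot;y)$ is an upper model of $F$ (which holds once $\alpha t\le \mu^{-1}$) tells you nothing about whether $F(z)\le F_t(z;y)$; so your claimed sufficient condition $\alpha t\le\mu^{-1}$ does not force the loop to exit. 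The correct sufficient condition, via Lemma~\ref{lem:upper_lower}, is $t\le\mu^{-1}$: then $F(z)\le F(z;y)+\tfrac{\mu}{2}\|z-y\|^2\le F_t(z;y)$ for all $z$, and the test fails. This mismatch between the stepsize $\alpha t$ used in the prox step and the looser model $F_t$ used in the test is not incidental -- it is exactly what produces the $(\alpha^{-1}-1)$ slack exploited later in the convergence analysis of the backtracking scheme.

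The error is self-correcting at the level of the final count: with the correct condition $t\le\mu^{-1}$, after $k$ reductions the stepsize is $\eta^k t$ and termination is guaranteed once $k\ge \log(t\mu)/\log(\eta^{-1})$, which yields at most $\lceil \log(t\mu)/\log(\eta^{-1})\rceil$ reductions plus the one final (successful) evaluation -- precisely the stated bound, and without needing your relaxation $\log(\alpha t\mu)\le\log(t\mu)$. So the conclusion stands and the repair is one line, but as written the key claim ``the exit test is automatically satisfied as soon as $F_{\alpha t}(\cdot;y)$ is a global upper model of $F$'' is false for the algorithm as specified.
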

\begin{proof}
	This follows immediately by observing that the loop in Algorithm~\ref{alg: backtrack}  terminates as soon as $t\leq\mu^{-1}$. 	
\end{proof}

We now establish convergence guarantees of Algorithm~\ref{alg: acc_prox_linear_back}, akin to those of Algorithm~\ref{alg: constant}.
\begin{theorem}[Convergence guarantees with backtracking] Fix real
	numbers $t_0 > 0$ and $\eta, \alpha \in (0, 1)$ and let $x^*$ be any point satisfying $F(x^*) \le
	F(x_k)$ for all iterates $x_k$ generated by Algorithm~\ref{alg:
		acc_prox_linear_back}. Define $\tilde{\mu}_{\max} := \max \{
	(\alpha  t_0)^{-1}, (\alpha \eta)^{-1} \mu \}$ and $\tilde{\mu}_0 :=
	(\alpha 
	t_0)^{-1}$. 
	Then the efficiency estimate holds:
	\[ \min_{j=1, \hdots, N} \norm{\mathcal{G}_{1/\tilde{\mu}_j}(y_j)}^2
	\le \frac{24 \tilde{\mu}_{\max}}{1-\alpha} \left
	( \frac{\tilde{\mu}_0 \norm{x^*-v_0}^2 }{N(N+1)(2N+1)} + \frac{M^2
		\left (r + \tfrac{\rho}{2} (N+3) \right ) }{(N+1)(2N+1)} \right ).\] 
	In the case $r = 0$, the inequality above holds with the second
	summand on the right-hand-side replaced by zero (even if $M =
	\infty$), and moreover the efficiency bound on function values holds:
	\[ F(x_N)-F(x^*) \le \frac{2\tilde{\mu}_{\max} \norm{x^*-v_0}^2 }{(N+1)^2}.\]
\end{theorem}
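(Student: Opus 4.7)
The strategy is to mimic the proof of Theorem~\ref{thm: cnx} line by line, but where the constant $\tilde\mu$ is replaced by the backtracked sequence $\tilde\mu_k = 1/(\alpha t_k)$ and where the role of the descent inequality ``$F(x_k)\le h(c(y_k)+\nabla c(y_k)(x_k-y_k))+g(x_k)+\tfrac{\mu}{2}\|x_k-y_k\|^2$'' (the Lipschitz estimate \eqref{eq: comp1}) is instead played by the exit condition of Algorithm~\ref{alg: backtrack}. Before invoking this, I would first verify two elementary monotonicity facts about the backtracked parameters.

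\textbf{Step 1 (Boundedness of $\tilde\mu_k$).} Since $t_k$ is nonincreasing in $k$ (backtracking only shrinks $t$), the sequence $\tilde\mu_k=1/(\alpha t_k)$ is nondecreasing. Next, when backtracking does take at least one step in iteration $k$, the exit value $t_k$ satisfies $t_k \ge \eta/\mu$ (otherwise the previous value $t_k/\eta$ would already have satisfied the exit criterion, since $t\le 1/\mu$ implies $F(S_{\alpha t}(y))\le F_t(S_{\alpha t}(y))$ by \eqref{eqn:ineq_min_max}), giving $\tilde\mu_k\le \mu/(\alpha\eta)$. When no backtracking is needed, $\tilde\mu_k=\tilde\mu_{k-1}$. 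A quick induction then yields $\tilde\mu_0\le\tilde\mu_k\le\tilde\mu_{\max}$ for all $k$.

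\textbf{Step 2 (Analog of Lemma~\ref{lem: telescoping}).} This is the main step. The exit condition of Algorithm~\ref{alg: backtrack} reads $F(x_k)\le F(x_k;y_k)+\tfrac{\alpha\tilde\mu_k}{2}\|x_k-y_k\|^2$, playing the role of \eqref{eq: comp1}. Combined with Lemma~\ref{lem: scnx} applied at $x_k=S_{1/\tilde\mu_k}(y_k)$ and then at $v_k=S_{1/(\tilde\mu_k a_k),a_k}(y_k,v_{k-1})$, and using the convexity constants $r$ of the pair $(h,c)$ and $\rho$ of $\Phi$ exactly as in \eqref{eqn:fin_step}, one obtains the recurrence
\begin{align*}
F(x_k) \le \,& a_k F(x^*) + (1-a_k) F(x_{k-1}) + \tfrac{\tilde\mu_k a_k^2}{2}\bigl(\|x^*-v_{k-1}\|^2 - \|x^*-v_k\|^2\bigr) \\
&-\tfrac{(1-\alpha)\tilde\mu_k}{2}\|y_k - x_k\|^2 + \rho a_k\|x^*-x_{k-1}\|^2 + \tfrac{r a_k^2}{2}\|x^*-v_{k-1}\|^2.
\end{align*}
The crucial $(1-\alpha)$ coefficient arises because Lemma~\ref{lem: scnx} generates $-\tfrac{\tilde\mu_k}{2}\|x_k-y_k\|^2$ while the exit condition only absorbs $\tfrac{\alpha\tilde\mu_k}{2}\|x_k-y_k\|^2$, leaving a residual factor of $1-\alpha$ in place of the $\tilde\mu-\mu$ of Lemma~\ref{lem: telescoping}.

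\textbf{Step 3 (Telescoping with variable $\tilde\mu_k$).} Setting $x=x^*$, subtracting $F^*$, dividing by $a_k^2$, and using $(1-a_k)/a_k^2\le 1/a_{k-1}^2$ (with $a_1=1$), I would sum from $k=1$ to $N$. The only novelty compared to the exact proof is that the telescoping sum $\sum_k \tilde\mu_k(\|x^*-v_{k-1}\|^2-\|x^*-v_k\|^2)$ no longer collapses cleanly. Applying Abel summation gives
\[ \sum_{k=1}^N \tilde\mu_k(d_{k-1}-d_k) = \tilde\mu_1 d_0 + \sum_{k=1}^{N-1}(\tilde\mu_{k+1}-\tilde\mu_k)d_k - \tilde\mu_N d_N, \]
with $d_k:=\|x^*-v_k\|^2$. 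When $r>0$, $M$ is finite, so I would bound $d_k\le M^2$ for $k\ge 1$ and use $\tilde\mu_N-\tilde\mu_1\le \tilde\mu_{\max}$ to control the middle telescoping term by $M^2\tilde\mu_{\max}$, which can be absorbed into the $M^2(r+\rho(N+3)/2)$ term in the statement. The end result is
\[ (1-\alpha)\sum_{k=1}^N \frac{\tilde\mu_k\|y_k-x_k\|^2}{a_k^2} \le \tilde\mu_{\max}\|x^*-v_0\|^2 + M^2\bigl(Nr + \tfrac{\rho N(N+3)}{2}\bigr)+(\text{absorbed terms}).\]

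\textbf{Step 4 (Conversion and the case $r=0$).} Using $\|\mathcal{G}_{1/\tilde\mu_k}(y_k)\|^2 = \tilde\mu_k^2\|y_k-x_k\|^2 \le \tilde\mu_{\max}\cdot \tilde\mu_k\|y_k-x_k\|^2$, the elementary averaging inequality $\min_k c_k\le (\sum c_k w_k)/(\sum w_k)$ with weights $w_k=1/a_k^2$, and the estimate $\sum_k 1/a_k^2\ge N(N+1)(2N+1)/24$ derived exactly as in the proof of Theorem~\ref{thm: cnx}, the first claim follows. For the $r=0$ claim (which forces $\rho=0$ by Lemma~\ref{lem:con_const_comp} so $F$ is convex and $M$ need not be finite), one cannot invoke $M$; instead I would redo Step 2 using the stronger $h(c(y)+\nabla c(y)(z-y))\le h(c(z))$ and convexity of $F$ to drop the $r,\rho$ error terms entirely, keeping the $-\tfrac{\tilde\mu_k}{2}\|w_k-x_k\|^2$ term from Lemma~\ref{lem: scnx}. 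The functional bound then follows by dropping the negative $-\tfrac{\tilde\mu_N}{2}\|x^*-v_N\|^2$ term on the left and multiplying through by $a_N^2\le \tilde\mu_{\max}/\tilde\mu_N\cdot a_N^2$.

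\textbf{Main obstacle.} The single delicate point is handling the varying $\tilde\mu_k$ in Step 3: in the exact proof the coefficient $\tilde\mu$ was constant and the sum telescoped perfectly, whereas here I must cope with the ``drift'' $\sum_k(\tilde\mu_{k+1}-\tilde\mu_k)d_k$. This is exactly why the case $r=0$ requires the separate convexity-based argument mentioned above, rather than the Abel-summation bookkeeping used in the generic case.
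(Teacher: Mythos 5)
Your Steps 1, 2 and 4 are sound and essentially coincide with the paper's argument: the exit test of the backtracking loop supplies the inequality $F(x_k)\le F(x_k;y_k)+\tfrac{1}{2t_k}\|x_k-y_k\|^2$ in place of \eqref{eq: comp1}, the residual coefficient $\tfrac{1}{2t_k}-\tfrac{\tilde\mu_k}{2}=-\tfrac{(1-\alpha)\tilde\mu_k}{2}$ is exactly right, and the bound $\tilde\mu_0\le\tilde\mu_k\le\tilde\mu_{\max}$ is argued the same way. The proof breaks, however, at the one point you yourself flag as delicate: Step 3. Your Abel summation leaves the drift term $\tfrac12\sum_{k=1}^{N-1}(\tilde\mu_{k+1}-\tilde\mu_k)\|x^*-v_k\|^2$, which you propose to bound by $\tfrac{M^2}{2}(\tilde\mu_N-\tilde\mu_1)\le\tfrac{M^2\tilde\mu_{\max}}{2}$ and ``absorb into the $M^2(r+\tfrac{\rho}{2}(N+3))$ term.'' This absorption is impossible in general: the drift bound is independent of $r$ and $\rho$, so when $r$ and $\rho$ are positive but tiny it dominates the stated $M^2$ term, and when $r=0$ (where the theorem permits $M=\infty$) the drift is not controlled at all --- your proposed ``separate convexity-based argument'' for that case still has to telescope $\sum_k\tilde\mu_k(\|x^*-v_{k-1}\|^2-\|x^*-v_k\|^2)$ with varying $\tilde\mu_k$ and never says how. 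So as written the argument proves a weaker inequality with an extra additive $O(M^2\tilde\mu_{\max}^2/N^3)$ term, not the stated one.

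The paper handles the varying parameter multiplicatively rather than additively. From the recurrence of your Step 2 one instead writes, using $\tilde\mu_k/\tilde\mu_{k-1}\ge 1$,
\begin{align*}
\frac{F(x_k)-F(x^*)}{a_k^2}+\frac{\tilde\mu_k}{2}\|x^*-v_k\|^2
\le \frac{\tilde\mu_k}{\tilde\mu_{k-1}}\left(\frac{F(x_{k-1})-F(x^*)}{a_{k-1}^2}+\frac{\tilde\mu_{k-1}}{2}\|x^*-v_{k-1}\|^2+\frac{\rho M^2}{a_k}+\frac{rM^2}{2}-\frac{(1-\alpha)\tilde\mu_k}{2a_k^2}\|x_k-y_k\|^2\right),
\end{align*}
so that unrolling the recursion multiplies the entire right-hand side by $\prod_{j}\tilde\mu_j/\tilde\mu_{j-1}=\tilde\mu_N/\tilde\mu_0\le\tilde\mu_{\max}/\tilde\mu_0$. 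There is then no additive drift at all; the whole bound is merely inflated by the bounded ratio $\tilde\mu_{\max}/\tilde\mu_0$, which is precisely where the factors $\tilde\mu_{\max}$ and $\tilde\mu_0$ in the final estimate come from, and the $r=0$, $M=\infty$ case goes through verbatim. I recommend replacing your Step 3 with this multiplicative telescoping; the rest of your outline then carries through.
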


\begin{proof} We closely follow the proofs of Lemma~\ref{lem: telescoping} and Theorem~\ref{thm: cnx},
	as such, we omit some details. For $k \ge 1$, the stopping
	criteria of the backtracking algorithm guarantees that analogous
	inequalities \eqref{eq: comp1} and \eqref{eqn:rev_new_eq1} hold,
	namely, 
	\begin{equation}
		F(x_k) \le h \big ( c(y_k) + \nabla c(y_k)(x_k-y_k)  \big
		) + g(x_k) + \frac{1}{2t_k} \norm{x_k - y_k}^2 \label{eq: back_1}
	\end{equation}
	and 
	\begin{equation} \label{eq: back_2}
		\begin{aligned}
			h \big ( c(y_k) + \nabla c(y_k) (x_k-y_k) \big ) + g(x_k) &\le  h\big ( c(y_k)
			+ \nabla c(y_k) (w_k-y_k) \big )\\
			& + \frac{\tilde{\mu}_k}{2} \left (
			\norm{w_k-y_k}^2 - \norm{w_k-x_k}^2 - \norm{x_k-y_k}^2 \right )\\
			& + a_k g(v_k) + (1-a_k) g(x_{k-1})
		\end{aligned}
	\end{equation}
	where $w_k := a_k v_k + (1-a_k)x_{k-1}$. By combining \eqref{eq:
		back_1} and \eqref{eq: back_2} together with the definition that $\tilde{\mu}_k =
	(\alpha t_k)^{-1}$, we conclude 
	\begin{equation} \label{eq: back_3}
		\begin{aligned}
			F(x_k) &\le h \big ( c(y_k) + \nabla c(y_k) (w_k-y_k) \big ) + a_k g(v_k) + (1-a_k) g(x_{k-1})\\
			& + \frac{\tilde{\mu}_k}{2} \left ( \norm{w_k-y_k}^2 -
			\norm{w_k-x_k}^2 \right ) +  \frac{\left (1-
				\alpha^{-1} \right )}{2t_k} \norm{x_k-y_k}^2.
		\end{aligned}
	\end{equation}
	We note the equality $w_k -y_k = a_k(v_k-v_{k-1})$. 
	Observe that~\eqref{eqn:inter_mid} holds by replacing $\tfrac{\tilde{\mu}}{2}$ with
	$\tfrac{\tilde{\mu}_k}{2}$; hence, we
	obtain for all points $x$
	\begin{equation}\label{eq: back_4}
		\begin{aligned}
			h \big ( c(y_k) + a_k\nabla c(y_k)(v_k - v_{k-1}) \big ) &+
			a_kg(v_k) \le h \big ( c(y_k) +
			a_k\nabla c(y_k)(x-v_{k-1})  \big )  + a_kg(x) \\
			& \quad + \frac{ \tilde{\mu}_k a_k^2}{2} \left (  \norm{x-v_{k-1}}^2 -
			\norm{x-v_k}^2 - \norm{v_k-v_{k-1}}^2 \right ).
		\end{aligned}
	\end{equation}
	Notice also that \eqref{eqn:fin_step} holds as stated. Combining the inequalities \eqref{eqn:fin_step},
	\eqref{eq: back_3}, and
	\eqref{eq: back_4}, we deduce 
	\begin{equation} \label{eq: back_5}
		\begin{aligned}
			F(x_k) \le a_k F(x) + &(1-a_k)F(x_{k-1}) + \frac{\tilde{\mu}_k a_k^2}{2} \left (
			\norm{x-v_{k-1}}^2 - \norm{x-v_k}^2
			\right )\\
			&- \frac{(\alpha^{-1}-1)}{2t_k}\|y_k-x_k\|^2+\rho a_k(1-a_k)\|x-x_{k-1}\|^2+\frac{ra_k^2}{2}\|x-v_{k-1}\|^2.
		\end{aligned}
	\end{equation}
	Plugging in $x=x^*$, subtracting $F(x^*)$ from both sides, and rearranging yields
	\begin{align*}
		\frac{F(x_k)-F(x^*)}{a_k^2}+\frac{\tilde{\mu}_k}{2}\|x^*-v_k\|^2&\leq \frac{1-a_k}{a_k^2}(F(x_{k-1})-F(x^*))+\frac{\tilde{\mu}_k}{2}\|x^*-v_{k-1}\|^2\\
		&\quad+\frac{\rho M^2}{a_k}+\frac{r M^2}{2}-\frac{(\alpha^{-1}-1)}{2t_k a_k^2}\|y_k-x_k\|^2.
	\end{align*}
	This is exactly inequality \eqref{eq: something} with 
	$\tfrac{\tilde{\mu}}{2}$ replaced by $\tfrac{\tilde{\mu}_k}{2}$ and
	$\tfrac{\tilde{\mu}-\mu}{2}$ replaced by $\tfrac{(\alpha^{-1}-1)}{2t_k}$;
	Using the fact that the sequence $\{\tilde{\mu}_k\}_{k=0}^\infty$ is nondecreasing and
	$\tfrac{1-a_k}{a_k^2} \le \frac{1}{a_{k-1}^2}$, we deduce 
	\begin{equation} \label{eq: back_5.5}
		\begin{aligned}
			\frac{F(x_k)-F(x^*)}{a_k^2} + \frac{\tilde{\mu}_k}{2} \norm{x^*-v_k}^2 
			&\le \frac{\tilde{\mu}_k}{\tilde{\mu}_{k-1}} \bigg (  \frac{ F(x_{k-1})-F(x^*) }{a_{k-1}^2} +
			\frac{\tilde{\mu}_{k-1}}{2} \norm{x^*-v_{k-1}}^2\\
			&    \qquad \qquad + \frac{\rho
				M^2}{a_k} + \frac{rM^2}{2} - \frac{(\alpha^{-1}-1)}{2t_k a_k^2}
			\norm{y_k-x_k}^2 \bigg ).
		\end{aligned}
	\end{equation}
	Notice $\tilde{\mu}_k\le
	\alpha^{-1} \max \left
	\{t_0^{-1}, \eta^{-1} \mu \right \} =:
	\tilde{\mu}_{\max}$. Recursively applying \eqref{eq: back_5.5} $N$
	times, we get 
	\begin{equation} \label{eq: back_7}
		\begin{aligned}
			\frac{F(x_N)-F(x^*)}{a_N^2} + \frac{\tilde{\mu}_N}{2} \norm{x^*-v_N}^2 \le \left ( \prod_{j=1}^N
			\frac{\tilde{\mu}_j}{\tilde{\mu}_{j-1}} \right ) \bigg ( \frac{\tilde{\mu}_0}{2} &\norm{x^*-v_0}^2 + \sum_{j=1}^N \frac{\rho M^2}{a_j} +
			\frac{NrM^2}{2}\\
			& - \sum_{j=1}^N \frac{(\alpha^{-1}-1)}{2 t_j} \cdot \frac{\norm{x_j-y_j}^2}{a_j^2}\bigg )
		\end{aligned}
	\end{equation}
	By the telescoping property of $\prod_{j=1}^N
	\tfrac{\tilde{\mu}_j}{\tilde{\mu}_{j-1}} \le \tfrac{\tilde{\mu}_{\max}}{\tilde{\mu}_0}$, we conclude
	\begin{align}
		\frac{\tilde{\mu}_{\max}}{\tilde{\mu}_0} \sum_{j=1}^N \frac{(\alpha^{-1}-1)}{2t_j} \cdot
		\frac{\norm{x_j-y_j}^2}{a_j^2} &\le
		\frac{\tilde{\mu}_{\max}}{\tilde{\mu}_0}
		\left (
		\frac{\tilde{\mu}_0}{2} \norm{x^*-v_0}^2 + \rho M^2   \left (
		\sum_{j=1}^N \frac{1}{a_j} \right ) +\frac{NrM^2}{2} \right ). \label{eq: back_6}
	\end{align}
	Using the inequality \eqref{eq: back_6} and $\alpha t_j  = \tilde{\mu}_j^{-1} \ge
	\tilde{\mu}_{\max}^{-1}$ for all $j$, we conclude
	\begin{align*}
		\tfrac{(\alpha^{-1}-1) \alpha}{2 \tilde{\mu}_0} \cdot \left ( \sum_{j=1}^N \tfrac{1}{a_j^2} \right )
		\min_{j=1, \hdots, N}  \norm{\tilde{\mu}_j (x_j-y_j)}^2 
		&\le \tfrac{\tilde{\mu}_{\max}}{\tilde{\mu}_0} \left ( \tfrac{\tilde{\mu}_0}{2} \norm{x^*-v_0}^2 + \rho M^2  \left (
		\sum_{j=1}^N \tfrac{1}{a_j} \right )  +\tfrac{NrM^2}{2} \right ).
	\end{align*}
	The result follows by mimicking the rest of the proof in
	Theorem~\ref{thm: cnx}. 
	Finally, suppose $r = 0$, and hence we can assume $\rho =
	0$. Inequality \eqref{eq: back_7} then implies
	\[ \frac{F(x_N)-F(x^*)}{a_N^2} + \frac{\tilde{\mu}_N}{2}
	\norm{x^*-v_N}^2 \le \frac{\tilde{\mu}_{\max}}{\tilde{\mu}_0} \cdot
	\frac{\tilde{\mu}_0}{2} \norm{x^*-v_0}^2.\]
	The claimed efficiency
	estimate follows. 
\end{proof}

\section{Removing the logarithmic dependence when an estimate on $F(x_0)-\inf F$ is known.}\label{sec:app_dual_meth}
In this section, we show that if a good estimate on the error $F(x_0)-\inf F$ is available, then there is a first-order method for the composite problem class \ref{eqn:comp2} with efficiency $\mathcal{O}\left({\frac{L^2\beta\|\nabla c\|\cdot(F(x_0)-\inf F)}{\varepsilon^3}}\right)$. Notice that this is an improvement over \eqref{eqn:final_cost2} since there is no logarithmic term. The outline is as follows. We will fix at the very beginning a budget of basic operations we are willing to tolerate. We will then perform a constant number of iterations of the inexact prox-linear Algorithm~\ref{alg: inex_prox_lin} with a constant number of iterations of an accelerated primal-dual first-order method on the proximal subproblem. Before delving into the details, it is important to note two  downsides of the scheme, despite the improved worst-case efficiency over the smoothing technique. First, we must have a good estimate on  $F(x_0)-\inf F$. Secondly, the number of inner iterations we are willing to tolerate depends on $\|\nabla c\|$, rather than on the norms $\|\nabla c(x_k)\|_{\textrm{op}}$ along the generated iterate sequences $x_k$. The reason is that the number of iterations (both outer and inner) must be set a priori, without knowledge of the iterates that will be generated. This is in direct contrast to the algorithms discussed in Section~\ref{sec:overall_comp}, where the dependences on $\|\nabla c\|$ could always be replaced by an upper bound on $\max_k \|\nabla c(x_k)\|_{\textrm{op}}$ along the generated iterate sequence $x_k$. Nonetheless, from the complexity viewpoint, the improved efficiency estimate  is notable.

We now describe the outlined strategy in detail. In order to find approximate minimizers of the proximal subproblems \eqref{eqn:target_rewritten}, let us instead focus on the dual \eqref{eqn:fench_dual}, and apply a (fast) primal-dual method with sublinear guarantees. To specify precisely the method  we will use on the subproblems, we follow the exposition in \cite{tseng_f_order}.
Recall that $\displaystyle G^{\star}$ is $C^1$-smooth with $t$-Lipschitz gradient. Moreover since $h$ is $L$-Lipschitz,  the domain of the function $w\mapsto h^{\star}(w)-\langle b,w\rangle$ has diameter upper bounded by $2L$. 
In Algorithm~\ref{alg:prim_dual}, we record the specialization of  \cite[Algorithm~1]{tseng_f_order} to our target problem \eqref{eqn:fench_dual}.\footnote{In the notation of \cite{tseng_f_order}, we set
	$\phi(w,v):=\langle v,A^*w\rangle-{G}(v)$ and $p(w):=h^{\star}(w)-\langle b,w\rangle$, and note $\prox_{tp}(\cdot)=\prox_{th^{\star}}(\cdot+tb)$.}
%

{\LinesNotNumbered
	\begin{algorithm}[h!]
		\SetKw{Null}{NULL}
		\SetKw{Return}{return}
		\Initialize{Fix two points $w_0, z_0 \in \dom p$; choose a real $l\geq t\|A\|^2$; set $v_{-1}:=0$ and $a_0:=1$.}
		{\bf Step j:} ($j\geq 0)$ Compute
		\begin{align*}
		y_j &= (1-a_j) w_{j} + a_j z_{j}\\
		z_{j+1}&=\prox_{\tfrac{h^{\star}}{a_jl}}\left(z_j-\tfrac{1}{a_j l}(\nabla G^{\star}(y_j)-b)\right)\\		
		w_{j+1}&= (1-a_j)w_j+a_jz_{j+1}\\
		a_{j+1}&=\tfrac{\sqrt{a_j^4+4a_j^2}-a_j^2}{2}
		\end{align*}
		Update the primal iterate
		$$v_j = (1-a_j)v_{j-1}+a_j\nabla G^{\star}(A^*y_j)$$

		%
		\caption{Optimal method (Auslender-Teboulle \cite{AT_fast}, Tseng \cite[Algorithm~1]{tseng_f_order})}
		\label{alg:prim_dual}
	\end{algorithm}}
	
	Algorithm~\ref{alg:prim_dual} comes equipped with the following guarantee \cite[Corollary $1(b)$]{tseng_f_order}.
	\begin{theorem}\label{thm:bas_conv_prim_dual}
		For every index $j$, the iterates generated by Algorithm~\ref{alg:prim_dual} satisfy:
		$$F_t(v_j;x)-\inf   F_t(\cdot;x)\leq \frac{8l L^2}{(j+2)^2}.$$ 
	\end{theorem}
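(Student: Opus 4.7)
Since the statement is a direct specialization of \cite[Corollary 1(b)]{tseng_f_order}, my plan is essentially to reproduce the standard Auslender--Teboulle/Tseng accelerated primal--dual analysis adapted to our dual problem \eqref{eqn:fench_dual}. Recall that the objective decomposes as $\varphi(w) = \big(G^{\star}(A^*w) - \langle b,w\rangle\big) + h^{\star}(w)$, where by Lemma~\ref{lem:expl_grad_comp} the first summand is convex with $(t\|A\|_{\textrm{op}}^2)$-Lipschitz gradient and therefore with $l$-Lipschitz gradient by the choice $l \geq t\|A\|^2$. The scheme can thus be read as a Nesterov-style accelerated proximal gradient iteration on $\varphi$, with the primal average $v_j$ maintained as the ``dual certificate'' of progress.

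First I would derive the standard three-point inequality arising from the proximal step defining $z_{j+1}$: for every $w\in\dom h^{\star}$,
\[
\langle \nabla G^{\star}(A^*y_j)-b,\, z_{j+1}-w\rangle + h^{\star}(z_{j+1}) - h^{\star}(w) \le \tfrac{a_j l}{2}\bigl(\|w-z_j\|^2 - \|w-z_{j+1}\|^2 - \|z_{j+1}-z_j\|^2\bigr).
\]
Second, I would invoke the descent lemma for the $l$-smooth piece of $\varphi$ at $y_j$, evaluated at $w_{j+1}=(1-a_j)w_j+a_j z_{j+1}$; combining with the above inequality and the convex combination $h^{\star}(w_{j+1}) \le (1-a_j)h^{\star}(w_j) + a_j h^{\star}(z_{j+1})$ yields, after using $w_{j+1}-y_j = a_j(z_{j+1}-z_j)$, a single-step inequality of the form
\[
\varphi(w_{j+1}) \le (1-a_j)\varphi(w_j) + a_j\bigl[\varphi(y_j) + \langle\nabla(G^{\star}\!\circ\! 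A^* - \langle b,\cdot\rangle)(y_j), w - y_j\rangle + h^{\star}(w)\bigr] + \tfrac{a_j^2 l}{2}\bigl(\|w-z_j\|^2 - \|w-z_{j+1}\|^2\bigr).
\]

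Third comes the primal bookkeeping, which is the crux of Tseng's argument. By Fenchel--Young applied to the smooth part, the bracketed quantity above is bounded by $\varphi^{\star}_{\text{pd}}$-type expressions involving $\nabla G^{\star}(A^*y_j)$; indeed $\nabla G^{\star}(A^*y_j)$ is the primal minimizer that achieves the Fenchel dual inequality $G^{\star}(A^*w) - \langle\nabla G^{\star}(A^*y_j), A^*w\rangle \ge -G(\nabla G^{\star}(A^*y_j))$. Using this, the convex recursion $v_j = (1-a_j)v_{j-1} + a_j\nabla G^{\star}(A^*y_j)$, and Jensen's inequality applied to the convex functions $G$ and $h(b-A\cdot)$, one arrives at the gap bound
\[
F_t(v_j;x) + \varphi(w_{j+1}) \le (1-a_j)\bigl[F_t(v_{j-1};x) + \varphi(w_j)\bigr] + \tfrac{a_j^2 l}{2}\bigl(\|w-z_j\|^2 - \|w-z_{j+1}\|^2\bigr).
\]
Dividing by $a_j^2$ and using the defining recursion $\tfrac{1-a_{j+1}}{a_{j+1}^2} = \tfrac{1}{a_j^2}$ (which is exactly what $a_{j+1} = (\sqrt{a_j^4+4a_j^2}-a_j^2)/2$ encodes), the quantity $\tfrac{1}{a_{j-1}^2}\bigl[F_t(v_{j-1};x)+\varphi(w_j)\bigr]$ telescopes against $\tfrac{l}{2}\|w-z_j\|^2$.

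Finally, plugging in $w = w^{\star}$, a dual minimizer, and using weak duality $\varphi(w^{\star}) = -\min F_t(\cdot;x)$, together with the elementary bounds $a_j \le \tfrac{2}{j+2}$ and $\|w^{\star}-z_0\| \le 2L$ (the latter because $\dom h^{\star}$ has diameter at most $2L$ by Lipschitz continuity of $h$), delivers the desired estimate $F_t(v_j;x) - \inf F_t(\cdot;x) \le \tfrac{8lL^2}{(j+2)^2}$. The main subtlety, and the step I would spend the most care on, is the primal recovery via Fenchel--Young: one must check that the gradient map $\nabla G^{\star}(A^*\cdot)$ indeed produces a primal point whose functional value is controlled by the linearized dual expression, and that Jensen's inequality goes through for both $G$ and the composition $h\circ(b-A\cdot)$ in the convex combination defining $v_j$.
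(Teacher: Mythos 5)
The paper does not actually prove this theorem; it simply invokes \cite[Corollary 1(b)]{tseng_f_order}, so a from-scratch reconstruction is a reasonable thing to attempt, and most of the ingredients you list (three-point inequality, descent lemma, the identity $w_{j+1}-y_j=a_j(z_{j+1}-z_j)$, the recursion $\tfrac{1-a_{j+1}}{a_{j+1}^2}=\tfrac{1}{a_j^2}$, the bounds $a_j\le\tfrac{2}{j+2}$ and $\diam(\dom h^{\star})\le 2L$) are the right ones. However, there is a genuine gap in your third step. The displayed ``gap recursion''
\[
F_t(v_j;x)+\varphi(w_{j+1}) \le (1-a_j)\bigl[F_t(v_{j-1};x)+\varphi(w_j)\bigr]+\tfrac{a_j^2 l}{2}\bigl(\|w-z_j\|^2-\|w-z_{j+1}\|^2\bigr)
\]
is false for a fixed comparison point $w$. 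Writing $\hat v_j:=\nabla G^{\star}(A^*y_j)$ and $\ell_j(w)$ for the linearized dual model at $y_j$, the Fenchel--Young computation you sketch gives
\[
F_t(\hat v_j;x)+\ell_j(w)\;=\;h(b-A\hat v_j)-\bigl[\langle b-A\hat v_j,w\rangle-h^{\star}(w)\bigr]\;\ge\;0,
\]
with equality only when $w\in\partial h(b-A\hat v_j)$. After applying Jensen to $F_t(\cdot\,;x)$, your per-step recursion reduces exactly to the reverse inequality $F_t(\hat v_j;x)+\ell_j(w)\le 0$, which therefore holds only for a comparison point $w$ that changes with $j$; but then the terms $\|w-z_j\|^2-\|w-z_{j+1}\|^2$ no longer telescope. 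Plugging in the dual minimizer $w^{\star}$ at the end does not repair this, since the per-step inequality need not hold at $w^{\star}$.

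The correct closing of the argument reverses the order of operations: first telescope the purely dual estimate $\varphi(w_{j+1})/a_j^2\le\sum_{i\le j}\ell_i(w)/a_i+\tfrac{l}{2}\|w-z_0\|^2$, which is valid for \emph{every} $w$; then use $\sum_{i\le j}a_i^{-1}=a_j^{-2}$ together with Jensen for $G$ alone to get $\sum_{i\le j}\ell_i(w)/a_i\le a_j^{-2}\bigl[\langle Av_j-b,w\rangle+h^{\star}(w)-G(v_j)\bigr]$; and only then minimize over $w\in\dom h^{\star}$, splitting off the quadratic term via $\sup_{w\in\dom h^{\star}}\|w-z_0\|^2\le(2L)^2$ and using $\inf_w\{\langle Av_j-b,w\rangle+h^{\star}(w)\}=-h(b-Av_j)$. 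This yields $F_t(v_j;x)+\varphi(w_{j+1})\le 2lL^2a_j^2\le 8lL^2/(j+2)^2$, and duality converts the gap bound into the claimed primal suboptimality. Note in particular that Jensen is never applied to $h(b-A\cdot)$: the function $h$ re-enters only through $h^{\star\star}=h$ after the final minimization over $w$, which is precisely why the averaging must be performed before, not after, the comparison point is chosen.
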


	Set $t=1/\mu$ and fix a real $q>0$, which will appear in the final efficiency estimate. Suppose that we aim to run a total of at most $T$ iterations of Algorithm~\ref{alg:prim_dual} over all the proximal subproblems. Suppose moreover that $T$ is sufficiently large to satisfy $T\geq \frac{4(1.5)^{3/2}\|\nabla c\|}{\sqrt{2\beta q/L}}$.
	
	Consider now the following procedure. Define
	$$N:=\ceil*{\left(\frac{T\sqrt{2\beta q/L}}{4\|\nabla c\|}\right)^{2/3}}-2$$
	and note $N\geq 0$.
	Let us now run the inexact prox-linear Algorithm~\ref{alg: inex_prox_lin} for $k=0,\ldots,N$ iterations  with each prox-linear subproblem approximately solved by running $$\ceil*{4\|\nabla c\|\sqrt{\tfrac{L(N+1)}{2\beta q}}}$$ 
	iterations of Algorithm~\ref{alg:prim_dual}; we will determine an estimate on the incurred errors $\varepsilon_k>0$ shortly. Observe that the total number of iterations of Algorithm~\ref{alg:prim_dual} is indeed at most $$(N+1)\cdot\ceil*{4\|\nabla c\|\sqrt{\tfrac{L(N+1)}{2\beta q}}}\leq 4\|\nabla c\|\sqrt{L}(N+1)^{3/2}/\sqrt{2\beta q}\leq T.$$ Appealing to Theorem~\ref{thm:bas_conv_prim_dual}, we deduce
\begin{align*}
	F_{1/\mu}(x_{k+1};x_{k})-\inf   F_{1/\mu}(\cdot;x_k)&\leq\frac{8\|\nabla c\|^2 L^2/\mu}{\ceil{4\|\nabla c\|\sqrt{L(N+1)/(2\beta q)}}^2}\\
	&\leq \frac{8\|\nabla c\|^2 L^2/\mu}{16 L\|\nabla c\|^2(N+1)/(2\beta q)}=\frac{q}{N+1}.
	\end{align*}
	Thus, in the notation of Algorithm~\ref{alg: inex_prox_lin} we can set $\varepsilon_k:=\frac{q}{N+1}$ for each index $k$. Theorem~\ref{thm: funct_pla} then yields the estimate
	\[ \min_{i=0, \hdots, N-1}  \norm{\mathcal{G}_{1/\mu}(x_i)}^2  \le
	\frac{2\mu \big ( F(x_0)-\inf F +  q\big
		)}{N}\leq \frac{2\mu \big ( F(x_0)-\inf F +  q\big
		)}{{\left(\frac{T\sqrt{2\beta q/L}}{4\|\nabla c\|}\right)^{2/3}}-2},\]
	Thus to find a point $x$ with $\|\mathcal{G}_{1/\mu}(x)\|\leq \varepsilon$ it suffices to choose $T$ satisfying 
	$$T\geq \frac{8\|\nabla c\|}{\sqrt{\beta q/L}}\cdot \left(1+\frac{\mu(F(x_0)-\inf F+q)}{\varepsilon^2}\right)^{3/2}.$$
	Notice that the assumed bound $T\geq \frac{4(1.5)^{3/2}\|\nabla c\|}{\sqrt{2\beta q/L}}$ holds automatically for this choice of $T$.
	
	In particular, if $q$ can be chosen to satisfy $\frac{q}{F(x_0)-\inf F}\in [\gamma_1,\gamma_2]$ for some fixed constants $\gamma_2\geq\gamma_1\geq 1$, the efficiency estimate becomes on the order of
	$$	\boxed{\mathcal{O}\left({\frac{L^2\beta\|\nabla c\|\cdot(F(x_0)-\inf F)}{\varepsilon^3}}\right)},$$	
	as claimed.

\end{document}